\theoremstyle{plain}
\newtheorem{theorem}{Theorem}[section]
\newtheorem{definition}[theorem]{Definition}
\newtheorem{lemma}[theorem]{Lemma}
\newtheorem{prop}[theorem]{Proposition}
\newtheorem{cor}[theorem]{Corollary}
\newtheorem{rem}[theorem]{Remark}
\newtheorem{ex}[theorem]{Example}
\renewcommand{\b}{\begin{equation}}
\newcommand{\e}{\end{equation}}
\newcommand{\g}{\mathfrak{g}}
\newcommand\C{{\mathbb C}}
\newcommand{\del}{\partial}
\newcommand{\delbar}{\overline\partial}
\begin{document}

\title[HKT manifolds: Hodge Theory, formality and balanced metrics]{HKT manifolds: Hodge Theory, formality and balanced metrics}

\author{Giovanni Gentili}
\address[Giovanni Gentili]{Dipartimento di Matematica ``G. Peano'' \\
Universit\`{a} degli studi di Torino \\
Via Carlo Alberto 10\\
10123 Torino, Italy}
\email{giovanni.gentili@unito.it}

\author{Nicoletta Tardini}
\address[Nicoletta Tardini]{Dipartimento di Scienze Matematiche, Fisiche e Informatiche\\
Unit\`a di Matematica e Informatica\\
Universit\`a degli Studi di Parma\\
Parco Area delle Scienze 53/A\\
43124 Parma, Italy}
\email{nicoletta.tardini@unipr.it}

\keywords{HKT; Hodge theory; formality; balanced metric; solvmanifold.}

\subjclass[2020]{53C26, 58A14, 22E25}
\begin{abstract} 
Let $(M,I,J,K,\Omega)$ be a compact HKT manifold and denote with $\partial$ the conjugate Dolbeault operator with respect to $I$, $\partial_J:=J^{-1}\overline\partial J$, $\partial^\Lambda:=[\partial,\Lambda]$ where $\Lambda$ is the adjoint of $L:=\Omega\wedge-$. Under suitable assumptions, we study Hodge theory for the complexes $(A^{\bullet,0},\partial,\partial_J)$ and $(A^{\bullet,0},\partial,\partial^\Lambda)$ showing a similar behavior to K\"ahler manifolds. In particular, several relations among the Laplacians, the spaces of harmonic forms and the associated cohomology groups, together with Hard Lefschetz properties, are proved. Moreover, we show that for a compact HKT $\mathrm{SL}(n,\mathbb{H})$-manifold the differential graded algebra $(A^{\bullet,0},\partial)$ is formal and this will lead to an obstruction for the existence of an HKT $\mathrm{SL}(n,\mathbb{H})$-structure $(I,J,K,\Omega)$ on a compact complex manifold $(M,I)$. Finally, balanced HKT structures on solvmanifolds are studied.
\end{abstract}
\maketitle

\section{Introduction}
Let $ (M,I,J,K) $ be a \emph{hypercomplex manifold}, i.e. a smooth manifold $ M $ equipped with three complex structures $ I,J,K $ that anticommute with each other, and such that $ IJ=K $. On a hypercomplex manifold there is a distinguished connection $ \nabla $, called the Obata connection \cite{Obata (1956)}, that is torsion-free and preserves the hypercomplex structure, in the sense that
\[
\nabla I=0\,,\qquad \nabla J=0\,,\qquad \nabla K=0\,.
\]

A Riemannian metric that is Hermitian with respect to all three complex structures $ I,J,K $ is said to be \emph{hyperhermitian}, and, accordingly, $ (M,I,J,K,g) $ is called an hyperhermitian manifold.

On a hyperhermitian manifold the $ 2 $-form
\[
\Omega:=\frac{g(J\cdot,\cdot)+ig(K\cdot,\cdot)}{2} 
\]
is non-degenerate of type $ (2,0) $ with respect to $ I $ and completely determines the hyperhermitian metric via the relation $ \Omega(Z,J\bar Z)=g(Z,\bar Z) $ for every $ Z\in T^{1,0}M $. Furthermore there is a bijective correspondence between hyperhermitian metrics and q-positive q-real $ (2,0) $-forms, where a $ (2,0) $-form $ \Omega $ is called q-real if $ J\Omega =\bar \Omega $ and it is called q-positive if additionally satisfies $ \Omega(Z,J\bar Z)>0 $ for every non-zero $ Z\in T^{1,0}M $.

A hyperhermitian manifold is \emph{HKT}, namely \emph{hyperk\"ahler with torsion}, if the associated $ (2,0) $-form satisfies
\[
\partial \Omega=0\,,
\]
where $ \partial  $ is the Dolbeault operator with respect to $ I $. The HKT condition is at many levels the hypercomplex analogue of the K\"ahler condition (see e.g. \cite{Alesker-Verbitsky (2006),Banos-Swann,grantcharov-lejmi-verbitsky,Grantcharov-Poon,lejmi-weber,verbitsky-hodge,Verbitsky (2009)}). The main purpose of this work is twofold: on one hand we explore the analogies with the K\"ahler setting from a cohomological point of view, as done in \cite{grantcharov-lejmi-verbitsky,lejmi-weber,verbitsky-hodge}, on the other hand we extend some results proved on hypercomplex nilmanifolds in \cite{barberis-dotti-verbitsky} to hypercomplex solvmanifolds.

In \cite{Verbitsky (2007)} Verbitsky proved that a compact HKT manifold has trivial canonical bundle if and only if the holonomy group of the Obata connection is contained in $ \mathrm{SL}(n,\mathbb{H}) $. Manifolds with the latter property are called  {\em $ \mathrm{SL}(n,\mathbb{H}) $-manifolds}. If one removes the HKT hypothesis, then it is still true that an $ \mathrm{SL}(n,\mathbb{H}) $-manifold has holomorphically trivial canonical bundle. The converse has been recently disproved by Andrada-Tolcachier \cite{Andrada-Tolcachier}, however it is, for instance, true for hypercomplex nilmanifolds \cite{barberis-dotti-verbitsky}. We prove here that on hypercomplex solvmanifolds the $ \mathrm{SL}(n,\mathbb{H}) $ condition is equivalent to the existence of an invariant holomorphic trivialization of the canonical bundle (Theorem \ref{thm:solv_SLnH}).\\

Another very relevant property in this context is the balanced condition. Indeed, it turns out \cite{Verbitsky (2009)} that for a compact HKT manifold $ (M,I,J,K,g) $ one has that $ (M,I,g) $ is balanced if and only if $ \partial \bar \Omega^n=0 $. In particular, if a compact HKT manifold is balanced it is necessarily $ \mathrm{SL}(n,\mathbb{H}) $. A very interesting conjecture posed by Alesker and Verbitsky predicts that this condition is also sufficient,  in the sense that an HKT $ \mathrm{SL}(n,\mathbb{H}) $-manifold admits a balanced HKT metric (which does not necessarily coincide with the initial one). Again, evidence for this conjecture is provided by nilmanifolds  with  invariant hypercomplex structure \cite{barberis-dotti-verbitsky} and we shall extend this fact to solvmanifolds with  invariant hypercomplex structure (Theorem \ref{Teor:solv}).

As explained in \cite{Verbitsky (2009)}, one way to approach this problem is by studying the quaternionic Calabi-Yau conjecture proposed by Alesker and Verbitsky in \cite{Alesker-Verbitsky (2010)}. The solvability of the relative equation would allow to prescribe the complex volume of the HKT manifold. More precisely, for any q-positive $ (2n,0) $-form $ \Theta $, one could find a new HKT form $ \Omega' $, compatible with the given hypercomplex structure, satisfying
\[
(\Omega')^n=\Theta\,.
\]
Choosing $ \Theta $ to be holomorphic would imply that $ \Omega' $ is balanced HKT. For further information on the (still unsolved in general) quaternionic Calabi-Yau conjecture see e.g. \cite{Alesker-Shelukhin (2017),BGV,DinewSroka,GentiliVezzoni,GV,GZ,Sroka,Z} and references therein.\\
Other sufficient conditions on solvmanifolds with an invariant HKT structure to have a balanced metric are given in Theorems \ref{thm:abelian-then-balanced}, \ref{thm:trivialcan-then-balanced}.
\medskip

On a hypercomplex manifold $ (M,I,J,K) $ endowed with a HKT structure $\Omega$  there are three important differential operators. Denote with $ A^{p,q}(M)=A^{p,q}_I(M) $ the space of $ (p,q) $-forms with respect to $ I $. Then we are interested in the usual (conjugate) Dolbeault operator $ \partial \colon A^{p,q}(M) \to A^{p+1,q}(M) $ and the twisted differential operator $ \partial_J \colon A^{p,q}(M) \to A^{p+1,q}(M) $ defined as $ \partial_J:=J^{-1}\bar \partial J $. Because of the integrability of $ I,J,K $, these operators anticommute and both square to zero. Therefore we obtain a cochain complex $ (A^{\bullet,q}, \partial,\partial_J) $ for every fixed $ q $. This differs from the complex case, as here we obtain a single complex, while in the complex setting $ \partial $ and $ \bar \partial $ give rise to a double complex. Here, we restrict to study the case $ q=0 $.\\
 Moreover, the existence of $\Omega$ leads to the definition of the operator $\del^\Lambda:=[\del,\Lambda]$ where $\Lambda$  is the adjoint of $L:=\Omega\wedge-$. Therefore we obtain a cochain complex $ (A^{\bullet,q}, \partial,\partial^\Lambda) $ for every fixed $ q $, and again we will restrict to $q=0$.
 Hence one can study cohomology groups and Hodge theory from a ``complex point of view'' on $ (A^{\bullet,0}, \partial,\partial_J) $ or from a ``symplectic point of view'' on $ (A^{\bullet,0}, \partial,\partial^\Lambda) $.\\
 Concerning the latter case, one can contextualize everything in the more general setting of Lefschetz spaces and with this approach one can then prove that all the natural Laplacian operators that arise in Section \ref{Sec:Lef} and at the beginning of Section \ref{Sec:HKT} are related.
From the complex point of view, naturally, the quaternionic Dolbeault, Bott-Chern and Aeppli cohomology groups can be defined:

\[
H^{p,0}_\partial(M):=\frac{\mathrm{Ker}(\partial\vert_{A^{p,0}(M)})}{\partial A^{p-1,0}(M)}\,, \qquad H^{p,0}_{\partial_J}(M):=\frac{\mathrm{Ker}(\partial_J\vert_{A^{p,0}(M)})}{\partial_J A^{p-1,0}(M)}\,,
\]
\[
H^{p,0}_{\mathrm{BC}}(M):=\frac{\mathrm{Ker}(\partial\vert_{A^{p,0}(M)})\cap \mathrm{Ker}(\partial_J\vert_{A^{p,0}(M)})}{\partial\partial_J A^{p-2,0}(M)}\,,
\]
\[
H^{p,0}_{\mathrm{A}}(M):=\frac{\mathrm{Ker}(\partial\partial_J\vert_{A^{p,0}(M)})}{\partial A^{p-1,0}(M)+ \partial_J A^{p-1,0}(M)}\,,
\]
when $ M $ is compact all these groups are finite-dimensional \cite{grantcharov-lejmi-verbitsky}, indeed, as usual, once fixed an hyperhermitian metric, one can show that each of these cohomology group is isomorphic to the kernel of the following Laplacians acting on $ (p,0) $-forms
\[
\Delta_\partial:=\partial \partial^*+\partial^*\partial\,, \qquad \Delta_{\partial_J}:=\partial_J \partial^*_J+\partial^*_J\partial_J\,,
\]
\[
\Delta_{\mathrm{BC}}:=\partial^* \partial +\partial^*_J\partial_J+\partial \partial_J \partial_J^* \partial^*+\partial^*_J\partial^*\partial \partial_J+\partial^*_J \partial \partial^* \partial_J+\partial^*\partial_J \partial_J^* \partial\,,
\]
\[
\Delta_{\mathrm{A}}:=\partial \partial^* +\partial_J\partial_J^*+\partial \partial_J \partial_J^* \partial^*+\partial^*_J\partial^*\partial \partial_J+\partial \partial_J^* \partial_J \partial^*+\partial_J\partial^* \partial \partial_J^*\,.
\]
For each of these we denote with a calligraphic letter the corresponding space of harmonic forms, thus, for instance, $ \mathcal{H}_\partial^{p,0}(M):=\mathrm{Ker}(\Delta_\partial\vert_{A^{p,0}(M)}) $. It is well known that on a compact K\"ahler manifold the spaces of Dolbeault, Bott-Chern and Aeppli-harmonic forms all coincide. We prove that the analogue result is also true for balanced HKT manifolds:

\begin{theorem}[Propositions \ref{prop:equality-Laplacians}, \ref{prop:equality-laplacian-BC}, \ref{prop:BC=A}]\label{Teor:harmonic}
	On a compact balanced HKT manifold $ M $ the spaces of harmonic forms all coincide:
	\[
	\mathcal{H}^{p,0}_\partial(M)= \mathcal{H}^{p,0}_{\partial_J}(M) = \mathcal{H}^{p,0}_{\mathrm{BC}}(M)= \mathcal{H}^{p,0}_{\mathrm{A}}(M)\,.
	\]
	In particular, there are isomorphisms
	\[
	H^{p,0}_\partial(M)\cong H^{p,0}_{\partial_J}(M) \cong H^{p,0}_{\mathrm{BC}}(M)\cong H^{p,0}_{\mathrm{A}}(M)
	\]
	for every $ p $.
\end{theorem}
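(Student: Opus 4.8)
The plan is to run the K\"ahler template: derive from the appropriate commutation identities that all four Laplacians act with the same kernel on $A^{\bullet,0}$. The essential input is the pair of \emph{quaternionic K\"ahler identities} on a balanced HKT manifold: up to a universal constant $\sqrt{-1}$,
\[
\partial^\Lambda=\sqrt{-1}\,\partial_J^*\,,\qquad\text{i.e.}\qquad [\Lambda,\partial]=-\sqrt{-1}\,\partial_J^*\,,\qquad\text{and likewise}\qquad [\Lambda,\partial_J]=\sqrt{-1}\,\partial^*\,,
\]
together with the identities obtained from these by taking formal adjoints. I would establish — or rather invoke — these within the Lefschetz-space formalism of Section \ref{Sec:Lef}: in general they carry a torsion correction, and the point of the hypothesis is precisely that this correction vanishes when the metric is balanced, equivalently $\partial\overline\Omega^{n}=0$; this is the only place where balancedness enters. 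I would also record the facts valid on \emph{any} HKT manifold: $\partial^{2}=\partial_J^{2}=0$ and $\{\partial,\partial_J\}=0$ by integrability of $I,J,K$, and $[L,\partial]=[L,\partial_J]=0$, since $\partial\Omega=0$ and $\partial_J\Omega=J^{-1}\overline\partial(J\Omega)=J^{-1}\overline\partial\,\overline\Omega=J^{-1}\overline{\partial\Omega}=0$ by q-realness together with $\partial\Omega=0$.

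For Proposition \ref{prop:equality-Laplacians} I would prove $\Delta_\partial=\Delta_{\partial_J}$ as operators on $A^{\bullet,0}$: substituting $\partial^{*}=-\sqrt{-1}\,[\Lambda,\partial_J]$ into $\Delta_\partial=\partial\partial^{*}+\partial^{*}\partial$, expanding the graded commutators and using repeatedly $\partial^{2}=0$, $\{\partial,\partial_J\}=0$, the graded Jacobi identity and $[L,\partial]=[L,\partial_J]=0$ to carry $\Lambda$ past the differentials, the mixed terms cancel and what remains is $\partial_J\partial_J^{*}+\partial_J^{*}\partial_J=\Delta_{\partial_J}$; this is verbatim the Weil computation behind $\Delta_\partial=\Delta_{\overline\partial}$ in the K\"ahler case. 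Equality of the operators gives $\mathcal H^{p,0}_\partial(M)=\mathcal H^{p,0}_{\partial_J}(M)$.

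For Propositions \ref{prop:equality-laplacian-BC} and \ref{prop:BC=A} I would work with the kernel descriptions
\[
\mathcal H^{p,0}_{\mathrm{BC}}=\ker\partial\cap\ker\partial_J\cap\ker(\partial_J^{*}\partial^{*})\,,\qquad
\mathcal H^{p,0}_{\mathrm{A}}=\ker\partial^{*}\cap\ker\partial_J^{*}\cap\ker(\partial\partial_J)\,,
\]
valid on any compact manifold because $\Delta_{\mathrm{BC}}$ and $\Delta_{\mathrm{A}}$ are sums of operators of the form $B^{*}B$. The inclusion $\mathcal H_\partial\subseteq\mathcal H_{\mathrm{BC}}$ is then immediate once $\mathcal H_\partial=\mathcal H_{\partial_J}$ is known (a $\partial$-harmonic form is $\partial$- and $\partial_J$-closed and $\partial^{*}$-closed, hence killed by $\partial_J^{*}\partial^{*}$). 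For the reverse inclusion let $\alpha\in\mathcal H^{p,0}_{\mathrm{BC}}$: since $\partial_J\alpha=0$, the identity gives $\partial^{*}\alpha=-\sqrt{-1}[\Lambda,\partial_J]\alpha=\sqrt{-1}\,\partial_J(\Lambda\alpha)\in\operatorname{Im}\partial_J$, whereas $\partial_J^{*}\partial^{*}\alpha=0$ forces $\partial^{*}\alpha\perp\operatorname{Im}\partial_J$; hence $\partial^{*}\alpha=0$ and $\alpha\in\mathcal H^{p,0}_\partial(M)$. For $\alpha\in\mathcal H^{p,0}_{\mathrm{A}}$ one has $\partial_J\partial\alpha=-\partial\partial_J\alpha=0$, so by the same identity $\partial^{*}\partial\alpha=\sqrt{-1}\,\partial_J(\Lambda\partial\alpha)\in\operatorname{Im}\partial_J$ and therefore $\|\partial\alpha\|^{2}=\langle\alpha,\partial^{*}\partial\alpha\rangle=0$ because $\partial_J^{*}\alpha=0$; with $\partial^{*}\alpha=0$ this gives $\alpha\in\mathcal H^{p,0}_\partial(M)$, the inclusion $\mathcal H_\partial\subseteq\mathcal H_{\mathrm{A}}$ again following from $\mathcal H_\partial=\mathcal H_{\partial_J}$. (Alternatively $\mathcal H^{p,0}_{\mathrm{BC}}=\mathcal H^{p,0}_{\mathrm{A}}$ follows from the conjugate-linear Hodge-star isomorphism $\mathcal H^{p,0}_{\mathrm{BC}}\cong\mathcal H^{n-p,0}_{\mathrm{A}}$, valid on any compact HKT manifold, together with the previous identifications.) Finally, since $M$ is compact and $\Delta_\partial,\Delta_{\partial_J},\Delta_{\mathrm{BC}},\Delta_{\mathrm{A}}$ are elliptic on $A^{\bullet,0}$, each of the four cohomology groups is finite-dimensional and isomorphic to its space of harmonic representatives \cite{grantcharov-lejmi-verbitsky}, so the equalities of harmonic spaces upgrade to the stated isomorphisms $H^{p,0}_\partial\cong H^{p,0}_{\partial_J}\cong H^{p,0}_{\mathrm{BC}}\cong H^{p,0}_{\mathrm{A}}$ for every $p$.

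The main obstacle is the first step: checking that the balanced condition alone removes the torsion correction, so that $\partial^\Lambda$ is literally $\sqrt{-1}\,\partial_J^*$. In the general Hermitian setting the Hodge identities come with torsion terms that vanish only in the K\"ahler case; that HKT geometry is rigid enough for balancedness to be enough here is the real content, and it is cleanest to package it inside the abstract Lefschetz-space machinery of Section \ref{Sec:Lef}. Everything downstream — the cancellations giving $\Delta_\partial=\Delta_{\partial_J}$ and the short kernel arguments for $\Delta_{\mathrm{BC}}$ and $\Delta_{\mathrm{A}}$ — is then the standard K\"ahler bookkeeping, the only mild novelty being that $\partial$ and $\partial_J$ form a single complex rather than a bicomplex.
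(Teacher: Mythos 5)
Your proposal is correct, and for the first step it coincides with the paper's argument: the paper also deduces $\Delta_\partial=\Delta_{\partial_J}$ from the balanced identity $[\Lambda,\partial_J]=\partial^*$ (equivalently $\partial^\Lambda=\partial_J^*$, Proposition \ref{prop:HKT_identities} — note the paper's normalization carries no factor of $\sqrt{-1}$, though this is immaterial for kernels) fed into the abstract Lefschetz computation of Proposition \ref{Laplacians}. Where you diverge is in the Bott--Chern and Aeppli steps. The paper proves $\mathcal H^{p,0}_{\mathrm{BC}}=\mathcal H^{p,0}_{\partial_J}$ via the operator identity $\Delta_{\mathrm{BC}}=\Delta_{\partial_J}\Delta_{\partial_J}+\partial^*\partial+\partial_J^*\partial_J$ (Proposition \ref{Bott-Chern}), and then gets $\mathcal H^{p,0}_{\mathrm{BC}}=\mathcal H^{p,0}_{\mathrm{A}}$ by proving one inclusion directly and closing the other with a dimension count that invokes the Hard Lefschetz Theorem \ref{thm:hlc} together with the duality $h^{p,0}_{\mathrm{BC}}=h^{2n-p,0}_{\mathrm{A}}$ from \cite[Remark 21]{grantcharov-lejmi-verbitsky}. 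You instead argue directly on kernels in both cases: for $\alpha\in\mathcal H^{p,0}_{\mathrm{BC}}$ the identity puts $\partial^*\alpha$ in $\operatorname{Im}\partial_J$ while $\partial_J^*\partial^*\alpha=0$ puts it in the orthogonal complement, and for $\alpha\in\mathcal H^{p,0}_{\mathrm{A}}$ the same identity yields $\|\partial\alpha\|^2=-\langle\partial_J^*\alpha,\Lambda\partial\alpha\rangle=0$. These arguments are sound (they are the standard K\"ahler-type orthogonality tricks) and buy a more self-contained proof of $\mathcal H^{p,0}_{\mathrm{A}}=\mathcal H^{p,0}_{\mathrm{BC}}$ that needs neither the Hard Lefschetz condition nor the external duality; what you lose relative to the paper is the explicit operator identity for $\Delta_{\mathrm{BC}}$, which has independent interest. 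Two small caveats: your $L^2$ pairings implicitly use that the adjoint $\partial^*=-*\partial*$ taken with the quaternionic star is the genuine $L^2$ adjoint — the paper checks this holds precisely in the balanced case, where the two natural inner products agree up to a constant — and the parenthetical duality in your Aeppli alternative should read $\mathcal H^{p,0}_{\mathrm{BC}}\cong\mathcal H^{2n-p,0}_{\mathrm{A}}$ rather than $n-p$.
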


We remark that the equality of $ \Delta_{\partial_J} $ and $ \Delta_\partial $ on balanced HKT manifolds is already implicitly proved in \cite[Theorem 10.2]{verbitsky-hodge}. Along the way we shall also study the Hard Lefschetz condition on these spaces (see Theorems \ref{thm: deldellambda-lemma} and \ref{thm:hlc}).

Another interesting notion to be investigated is formality since it provides an obstruction to the $\del\del_J$-lemma and so to the existence of HKT $\mathrm{SL}(n,\mathbb{H})$-structures. More precisely, we prove

\begin{theorem}[Theorem \ref{thm:deldelj-lemma-formal}]\label{Teor:formality}
	Let $(M,I,J,K)$ be a compact hypercomplex manifold satisfying the $\del\del_J$-lemma, then the differential graded algebra $(A^{\bullet,0}(M),\del)$ is formal.
\end{theorem}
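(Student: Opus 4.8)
The plan is to transplant the Deligne--Griffiths--Morgan--Sullivan ``$\del\delbar$-lemma implies formality'' argument to the single complex $(A^{\bullet,0}(M),\del,\del_J)$, with $\del_J$ playing the role of $d^c$ and the $\del\del_J$-lemma playing the role of the $\del\delbar$-lemma. The structural input I would record first is that on a hypercomplex manifold both $\del$ and $\del_J=J^{-1}\delbar J$ are graded derivations of the wedge product on $A^{\bullet,0}(M)$ (for $\del_J$ this is because $\delbar$ is a derivation and $J$, extended $\mathbb{C}$-linearly, acts as an algebra automorphism of the complexified exterior algebra), that they anticommute, and that each squares to zero --- all of which is recalled in the introduction. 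Note also that the inclusion $\del\del_J A^{\bullet-2,0}(M)\subseteq \mathrm{Ker}\,\del\cap\mathrm{Ker}\,\del_J\cap(\mathrm{Im}\,\del+\mathrm{Im}\,\del_J)$ is automatic, so the $\del\del_J$-lemma is exactly the statement that this triple intersection equals $\mathrm{Im}\,\del\del_J$; in particular both $\mathrm{Ker}\,\del\cap\mathrm{Ker}\,\del_J\cap\mathrm{Im}\,\del$ and $\mathrm{Ker}\,\del\cap\mathrm{Ker}\,\del_J\cap\mathrm{Im}\,\del_J$ coincide with $\mathrm{Im}\,\del\del_J$.

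First I would introduce the differential graded subalgebra $B^{\bullet}:=\mathrm{Ker}(\del_J\vert_{A^{\bullet,0}(M)})$ of $(A^{\bullet,0}(M),\del)$: it is a subalgebra because $\del_J$ is a derivation, and it is $\del$-stable because $\del\del_J=-\del_J\del$. Since $\del_J$ is a derivation, $\mathrm{Im}\,\del_J$ is a two-sided ideal of $B^{\bullet}$, so the quotient algebra
\[
C^{\bullet}:=B^{\bullet}/\mathrm{Im}\,\del_J=H^{\bullet,0}_{\del_J}(M)
\]
inherits an algebra structure, and $\del$ descends to it since $\del\del_J A^{\bullet-1,0}(M)\subseteq\mathrm{Im}\,\del_J$. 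The $\del\del_J$-lemma then forces the induced differential on $C^{\bullet}$ to vanish: for $\alpha\in\mathrm{Ker}\,\del_J$ one has $\del\alpha\in\mathrm{Ker}\,\del\cap\mathrm{Ker}\,\del_J\cap\mathrm{Im}\,\del=\mathrm{Im}\,\del\del_J\subseteq\mathrm{Im}\,\del_J$, hence $[\del\alpha]=0$ in $H^{\bullet,0}_{\del_J}(M)$. Thus $(C^{\bullet},0)=(H^{\bullet,0}_{\del_J}(M),0)$ is a differential graded algebra with zero differential.

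Next I would show that the inclusion and the projection
\[
(A^{\bullet,0}(M),\del)\ \xleftarrow{\ \iota\ }\ (B^{\bullet},\del)\ \xrightarrow{\ \pi\ }\ (H^{\bullet,0}_{\del_J}(M),0)
\]
are both quasi-isomorphisms of differential graded algebras. For $\iota$: if $\alpha\in A^{\bullet,0}(M)$ is $\del$-closed, then $\del_J\alpha\in\mathrm{Ker}\,\del\cap\mathrm{Ker}\,\del_J\cap\mathrm{Im}\,\del_J=\mathrm{Im}\,\del\del_J$, say $\del_J\alpha=\del\del_J\gamma=-\del_J\del\gamma$, so $\alpha+\del\gamma\in B^{\bullet}$ is $\del$-closed and $\del$-cohomologous to $\alpha$, which gives surjectivity on $\del$-cohomology; and if $\beta\in B^{\bullet}$ satisfies $\beta=\del\alpha$ for some $\alpha\in A^{\bullet,0}(M)$, then $\beta\in\mathrm{Ker}\,\del\cap\mathrm{Ker}\,\del_J\cap\mathrm{Im}\,\del=\mathrm{Im}\,\del\del_J$, so $\beta=\del(\del_J\sigma)$ with $\del_J\sigma\in B^{\bullet}$, which gives injectivity. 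The argument for $\pi$ is the same with the roles of $\del$ and $\del_J$ interchanged: given a $\del_J$-class one replaces it, via a correction in $\mathrm{Im}\,\del_J$, by a $\del$-closed representative in $B^{\bullet}$ (using the $\mathrm{Im}\,\del$ form of the lemma), and the remaining ambiguity is killed by the $\mathrm{Im}\,\del_J$ form. Chaining $\iota$ and $\pi$ exhibits $(A^{\bullet,0}(M),\del)$ as weakly equivalent, through differential graded algebra morphisms, to $(H^{\bullet,0}_{\del_J}(M),0)$; moreover $\pi_{*}\circ(\iota_{*})^{-1}$ is an algebra isomorphism $H^{\bullet}(A^{\bullet,0}(M),\del)=H^{\bullet,0}_{\del}(M)\xrightarrow{\ \sim\ }H^{\bullet,0}_{\del_J}(M)$, so the target is precisely the cohomology algebra of $(A^{\bullet,0}(M),\del)$ equipped with the zero differential, and formality follows by definition.

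I do not expect a serious obstacle: the whole argument is the formal ``$\del\del_J$-lemma $\Rightarrow$ formality'' mechanism, and the only point needing care is that $A^{\bullet,0}(M)$ here is a single complex, not a bigraded double complex, so one cannot invoke the bigraded K\"ahler statement verbatim --- but the steps above use only the derivation property, the anticommutation $\del\del_J=-\del_J\del$, the vanishing of the squares, and the $\del\del_J$-lemma in the exact form stated in the paper, all of which are available. If preferred, this can be isolated once and for all as a lemma about ``$\del\del_J$-differential graded algebras'' and then applied as a black box.
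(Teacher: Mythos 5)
Your proposal is correct and follows essentially the same route as the paper: the paper also establishes formality via the zig-zag $(A^{\bullet,0}(M),\del)\leftarrow(A^{\bullet,0}(M)\cap\mathrm{Ker}\,\del_J,\del)\rightarrow(H^{\bullet,0}_{\del_J}(M),0)$, proving in three lemmas that the inclusion and the projection are DGA quasi-isomorphisms and that the induced differential on $H^{\bullet,0}_{\del_J}(M)$ vanishes, exactly as you do. The only difference is presentational: you make explicit the derivation/ideal checks that justify the algebra structures on $\mathrm{Ker}\,\del_J$ and its quotient, which the paper leaves implicit.
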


As a consequence, we obtain that triple $ \partial $-Massey products vanish, which allows us to derive the following interesting obstruction for a compact complex $ 4n $-dimensional manifold to allow HKT $ \mathrm{SL}(n,\mathbb{H}) $-structures.

\begin{cor}[cf. Corollary \ref{cor:non-existence-hkt}]\label{Cor:Massey}
Let $(M,I)$ be a $4n$-dimensional compact complex manifold with holomorphically trivial canonical bundle and such that there exists a non trivial $\del$-Massey product. Then $(M,I)$ does not admit any complex structures $J,K$ such that $(M,I,J,K)$ is hypercomplex and admits a HKT 
metric.
\end{cor}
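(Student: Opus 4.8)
The plan is to argue by contradiction. Suppose $(M,I)$ admits complex structures $J,K$ making $(M,I,J,K)$ hypercomplex and carrying an HKT metric $\Omega$. Since the canonical bundle of $(M,I)$ is holomorphically trivial, by Verbitsky's theorem \cite{Verbitsky (2007)} the holonomy of the Obata connection is contained in $\mathrm{SL}(n,\mathbb{H})$, so $(M,I,J,K,\Omega)$ is a compact HKT $\mathrm{SL}(n,\mathbb{H})$-manifold. By the results preceding Theorem \ref{Teor:formality} — specifically the statement that on a compact HKT $\mathrm{SL}(n,\mathbb{H})$-manifold the $\del\del_J$-lemma holds (this is the input we are allowed to cite) — we may invoke Theorem \ref{Teor:formality} to conclude that the differential graded algebra $(A^{\bullet,0}(M),\del)$ is formal.

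The next step is to pass from formality of $(A^{\bullet,0}(M),\del)$ to the vanishing of triple $\del$-Massey products. This is the standard consequence of formality: a formal dga is, in particular, quasi-isomorphic to its cohomology with zero differential, and any dga with vanishing differential has all Massey products containing only the zero class; since Massey products are invariants of the quasi-isomorphism type of the dga, every triple Massey product in $(A^{\bullet,0}(M),\del)$ must contain $0$, i.e. must be trivial. I would phrase this precisely in terms of the $\del$-Massey product $\langle [a],[b],[c]\rangle \subset H^{\bullet,0}_\del(M)$ defined for $\del$-closed forms $a,b,c\in A^{\bullet,0}(M)$ with $[a][b]=0=[b][c]$ in $H^{\bullet,0}_\del(M)$, exactly mirroring the classical construction but inside the complex $(A^{\bullet,0}(M),\del)$.

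Finally, I would identify the $\del$-Massey products appearing in the hypothesis with those just shown to vanish, reaching the contradiction. The point here is that the hypothesis "there exists a non trivial $\del$-Massey product" on $(M,I)$ refers precisely to a non-trivial triple Massey product in the dga $(A^{\bullet,0}(M),\del)$ — which is the very object the holomorphic triviality of the canonical bundle makes available and meaningful — so the existence of such a product directly contradicts the vanishing forced by formality. Hence no such $J,K,\Omega$ can exist.

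The main obstacle is not in the logical skeleton, which is short, but in making sure the setup is coherent: one must check that the $\del$-Massey products referenced in the statement are indeed defined on $(A^{\bullet,0}(M),\del)$ for $(M,I)$ with holomorphically trivial canonical bundle (independently of any hypercomplex structure), and that once a compatible HKT $\mathrm{SL}(n,\mathbb{H})$-structure exists, this is the same dga to which Theorem \ref{Teor:formality} applies — so that the "non-triviality" assumed and the "triviality" derived really concern the same products. Once this identification is pinned down, the corollary follows immediately from Theorem \ref{Teor:formality} together with the classical implication formality $\Rightarrow$ vanishing of Massey products.
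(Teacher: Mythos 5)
Your proposal is correct and follows essentially the same route as the paper: Verbitsky's theorem converts holomorphic triviality of the canonical bundle plus the HKT condition into the $\mathrm{SL}(n,\mathbb{H})$ property, the $\partial\partial_J$-lemma (from Grantcharov--Lejmi--Verbitsky) combined with Theorem \ref{thm:deldelj-lemma-formal} gives formality of $(A^{\bullet,0}(M),\partial)$, and formality forces all triple $\partial$-Massey products to vanish, contradicting the hypothesis. This is exactly the chain the paper uses via Theorem \ref{thm:triple_Mass_prod} and Corollary \ref{cor:non-existence-hkt}.
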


Notice that, as a consequence of Theorems \ref{thm:vanishing-massey}, \ref{thm:vanishing-massey-nilmanifold}, on nilmanifolds with invariant HKT structures the $\del$-Massey products vanish. For this reason, 
we use such obstruction to give some examples of complex solvmanifolds without invariant HKT  $\mathrm{SL}(n,\mathbb{H})$-structures.

\medskip
The organization of the paper is the following. In Section \ref{Sec:Lef} we briefly study a class of differential graded algebras which we call ``Lefschetz''. Building on the work of Tomassini and Wang \cite{Tomassini-Wang} we define a generalization of the Hodge star operator, which allows us to take into account formal adjoints and discuss some relations between Laplacians. This algebraic picture is then applied to HKT manifolds in Section \ref{Sec:HKT}, leading us to the proof of Theorem \ref{Teor:harmonic}. Section \ref{Sec:form} is then devoted to investigate the notion of formality. Here we prove Theorem \ref{Teor:formality} and Corollary \ref{Cor:Massey}. Finally, in Section \ref{Sec:solv} we briefly study hypercomplex structures on solvmanifolds and their relation with the balanced condition.

\medskip 
\noindent {\bf Acknowledgements.} Both authors are deeply grateful to Luigi Vezzoni for proposing this research argument and for many helpful discussions. This paper was partly written during the first author's visits to IMPA (Rio de Janeiro) and IMECC/UNICAMP (Campinas). He wishes to thank both institutions for the warm hospitality. In particular, he expresses his most sincere gratitude to Misha Verbitsky for countless discussions, remarkable helpfulness and guidance during the visit experience at IMPA and he is profoundly thankful to Henrique S\'{a} Earp for many instructive conversations. The second author would like to thank Riccardo Piovani, Tommaso Sferruzza and Adriano Tomassini for interesting discussions on formality and Massey products. Both authors would like to thank Adri\'{a}n Andrada and Alejandro Tolcachier for sharing a preliminary version of \cite{Andrada-Tolcachier} and for pointing out an inaccuracy in the first version of Theorem \ref{thm:solv_SLnH}. We also thank the anonymous referee for their comments that improved the presentation of the paper. The second author has financially been supported by the Programme ``FIL-Quota Incentivante'' of University of Parma and co-sponsored by Fondazione Cariparma. This work was partially supported by GNSAGA of INdAM.

\section{Lefschetz spaces}\label{Sec:Lef}

In this section, inspired by the algebraic treatment of Tomassini and Wang \cite{Tomassini-Wang}, we wish to push a little further their work, proving some identities between Laplacians defined in a fairly general context. We start by recalling the main definitions and results from \cite{Tomassini-Wang} (see also \cite{Wang}).

\begin{definition}
Let $ A=\bigoplus_{p=0}^{2n} A^p $ be a direct sum of complex vector spaces. Let $ L $ be a $ \C $-linear endomorphism of $ A $ such that $ L(A^p)\subseteq A^{p+2} $ for $ p=0,\dots,2n-2 $ and $ L(A^{2n-1})=L(A^{2n})=0 $. We say that $ (A,L) $ is a \emph{Lefschetz space} if $ L $ satisfies the Hard Lefschetz Condition (HLC), i.e.
\[
L^{n-p}\colon A^{p}\to A^{2n-p}
\]
is an isomorphism for all $ p=0,\dots,n $.

If a Lefschetz space $ (A,L) $ is equipped with a $ \C $-linear endomorphism $ d $ such that $ d(A^p)\subseteq A^{p+1} $ for $ p=0,\dots,2n-1 $, while $ d(A^{2n})=0 $ we call the triple $ (A,L,d) $ a \emph{differential Lefschetz space}.\\
If moreover $d^2=0$ then the triple $ (A,L,d) $ is called a \emph{Lefschetz complex}.
\end{definition}

On a Lefschetz space we say that $ \alpha \in A^p $ is a \emph{primitive form} if $ p\leq n $ and $ L^{n-p+1}\alpha=0 $. By the HLC immediately follows the decomposition into primitive forms (see \cite{Wang}), more precisely, for every $ \alpha \in A^p $ there exist unique primitive $ \alpha^k\in A^{p-2k} $ such that
\begin{equation}\label{Lef_dec}
\alpha= \sum_{k=0}^{\lfloor p/2 \rfloor} \frac{1}{k!}L^k\alpha^k\,.
\end{equation}

As a generalization of the symplectic star operator Tomassini and Wang \cite{Tomassini-Wang} introduced the \emph{Lefschetz star operator} $ *_L\colon A \to A $, acting on a primitive form $ \beta\in A^p $ as follows:
\begin{equation*}
*_L\frac{1}{k!}L^k\beta:=(-1)^{1+2+\dots+p}\frac{1}{(n-p-k)!}L^{n-p-k}\beta\,.
\end{equation*}
Clearly the definition is then extended by linearity to any $ \alpha \in A^p $ via the Lefschetz decomposition \eqref{Lef_dec}. Notice that $*_L^2=1$.

The starting point of the discussion by Tomassini and Wang is the following general Demailly-Griffiths-K\"ahler identity \cite[Theorem A]{Tomassini-Wang}.
\begin{theorem}\label{DGK-identity}
Let $ (A,L,d) $ be a differential Lefschetz space and $ \Lambda=*_L^{-1} L*_L$ the dual Lefschetz operator. Define $ d^\Lambda\in \mathrm{End}(A) $ by
\[
d^\Lambda\vert_{A^p}:=(-1)^{p+1}*_Ld*_L\,,
\]
and assume that $ [L,[d,L]]=0 $, then
\[
[d^\Lambda,L]=d+[\Lambda,[d,L]]\,,\qquad [d,\Lambda]=d^\Lambda+[[\Lambda,d^\Lambda],L]\,.
\]
\end{theorem}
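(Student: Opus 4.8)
The plan is to prove the general Demailly-Griffiths-K\"ahler identities of Theorem \ref{DGK-identity} by mimicking, in this purely algebraic Lefschetz-space setting, the classical argument from K\"ahler geometry (as in Demailly's book), replacing every use of the Hodge star by the Lefschetz star $*_L$ and every metric computation by the formal properties of $*_L$, $L$ and the Lefschetz decomposition \eqref{Lef_dec}. First I would record the algebraic preliminaries: that $*_L^2=1$, that $\Lambda=*_L^{-1}L*_L=*_LL*_L$ lowers degree by $2$, that on $A^p$ one has the commutator $[L,\Lambda]=(n-p)\,\mathrm{id}$ (this is the $\mathfrak{sl}(2)$-relation, which follows formally from the HLC and the primitive decomposition), and that by definition $d^\Lambda|_{A^p}=(-1)^{p+1}*_Ld*_L$. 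I would also note the bracket conventions being used for (anti)commutators of operators of given parity, so that signs are unambiguous.

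Next I would prove the \emph{second} identity $[d,\Lambda]=d^\Lambda+[[\Lambda,d^\Lambda],L]$ first, since it is the one that does not require the hypothesis $[L,[d,L]]=0$ in an essential way at the outset, and then derive the first from it by applying $*_L$ and using $*_L d^\Lambda *_L=\pm d$ together with the conjugation behaviour of $L$ and $\Lambda$ under $*_L$. For the second identity the strategy is the standard one: set $e:=[d,L]$, so $e(A^p)\subseteq A^{p+2}$, and decompose $d$ according to how it interacts with the primitive decomposition. Concretely, one checks how $d$ acts on a form $L^k\beta$ with $\beta$ primitive, using $d(L^k\beta)=L^k(d\beta)+kL^{k-1}(e\beta)$, then applies $*_L$ termwise using its explicit formula on primitives, and reads off $*_L d *_L$. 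Comparing the result with $[d,\Lambda]$ computed the same way yields the correction term $[[\Lambda,d^\Lambda],L]$. The hypothesis $[L,[d,L]]=0$, i.e. $[L,e]=0$, is what guarantees that $e$ itself commutes with $L$, so that $e$ respects the grading-shift cleanly and the only correction is the stated one; without it there would be higher-order terms.

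The main obstacle I expect is bookkeeping: getting all the signs right in the termwise application of $*_L$ to the primitive decomposition of $d(L^k\beta)$, and making sure the parity conventions in the graded commutators $[d^\Lambda,L]$, $[\Lambda,[d,L]]$, etc.\ are consistent throughout (here $d$ and $d^\Lambda$ are odd while $L$ and $\Lambda$ are even, so some of these are genuine commutators and the signs come only from the $(-1)^{1+2+\dots+p}$ in the definition of $*_L$ and from the degree shifts). A clean way to organise this is to first establish the auxiliary relations $*_L L *_L=\Lambda$, $*_L \Lambda *_L=L$, and $*_L d^\Lambda *_L|_{A^p}=(-1)^{p}d$ (or whatever the correct sign turns out to be), and the $\mathfrak{sl}(2)$-type identity $[L,\Lambda]=(n-p)$ on $A^p$, and then the two identities in the theorem follow by applying $*_L$ to each other and by a direct but careful computation on primitive forms. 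Once the second identity is proved, the first is obtained by conjugating with $*_L$: $*_L[d,\Lambda]*_L=[\,{*_L}d{*_L},L\,]$ up to sign $=\pm[d^\Lambda,L]$ after adjusting degrees, and $*_L\big(d^\Lambda+[[\Lambda,d^\Lambda],L]\big)*_L$ rearranges, using $*_L d^\Lambda *_L=\pm d$ and $*_L[\Lambda,d^\Lambda]*_L=\pm[L,d]$, into $d+[\Lambda,[d,L]]$, completing the proof.
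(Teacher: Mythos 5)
First, a point of reference: the paper contains no proof of this statement. It is imported verbatim as Theorem A of \cite{Tomassini-Wang}, so there is no in-paper argument to compare yours against; the relevant comparison is with the cited source, whose proof is indeed of the shape you describe: a direct computation of one of the two brackets on the Lefschetz decomposition, with the other identity obtained by $*_L$-conjugation. Your duality step is sound and worth making precise: one checks $*_L L *_L=\Lambda$, $*_L d^\Lambda *_L|_{A^p}=(-1)^{p+1}d$, $*_L[d,L]*_L|_{A^p}=(-1)^{p}[\Lambda,d^\Lambda]$ and hence $*_L[\Lambda,[d,L]]*_L|_{A^p}=(-1)^{p+1}[[\Lambda,d^\Lambda],L]$, so that the two identities of the theorem are exact $*_L$-conjugates of one another and it suffices to prove either.

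Two things need repair. First, your claim that the second identity ``does not require the hypothesis $[L,[d,L]]=0$ in an essential way at the outset'' is wrong: since the two identities are conjugate, the hypothesis enters both symmetrically, and it is needed from the very first line, because the formula $[d,L^k]=kL^{k-1}[d,L]$ on which your computation rests is false without $[L,[d,L]]=0$. Second, and more substantially, the entire difficulty of the theorem sits inside the step you summarize as ``applies $*_L$ termwise \dots and reads off $*_L d *_L$''. To apply $*_L$ to $L^k(d\beta)+kL^{k-1}(e\beta)$ you must first write $d\beta\in A^{s+1}$ and $e\beta\in A^{s+3}$ in their own primitive decompositions, and these are not independent: applying $L^{n-s+1}$ to the primitive $\beta\in A^s$ gives $L^{n-s+1}d\beta=-(n-s+1)L^{n-s}e\beta$, which couples the deep primitive components of $d\beta$ to those of $e\beta$. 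It is precisely this coupling that forces the correction term to be exactly $[\Lambda,[d,L]]$ and nothing more, and verifying it is the component-by-component computation that constitutes the actual proof in \cite{Tomassini-Wang}. As written, your proposal is a correct road map with the substantive verification still outstanding. (A minor point: with the conventions here one gets $[L,\Lambda]=(p-n)\,\mathrm{id}$ on $A^p$, i.e.\ the opposite sign to the one you state; harmless, but fix it before using it.)
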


Notice that if $ (A,L,d) $ is a Lefschetz complex then $d^2=0$ implies $(d^\Lambda)^2=0$. In case $[d,L]=0$, one also obtains that $$
[d,d^\Lambda]=0.
$$
Therefore, on a Lefschetz complex with $[d,L]=0$ one has that the triple $(A,d,d^\Lambda)$ is a double complex.

We summarize here the main consequences which we are interested in (cf. \cite[Theorems 3.3, 3.5]{Tomassini-Wang}).

\begin{theorem}\label{Symplectic}
Let $ (A,L,d) $ be a Lefschetz complex. Suppose $ [d,L]=0 $ and denote with $ \mathcal{H}^p_L $ the space of \emph{Lefschetz harmonic} $ p $-forms, i.e. elements $ \alpha \in A^p $ such that
\[
d \alpha=0=d^\Lambda\alpha.
\]
Then $ (\mathcal{H}^{\bullet}_L,L) $ and  $ (\mathcal{H}^{\bullet}_L,\Lambda) $ satisfy the HLC. Furthermore the following are equivalent:
\begin{itemize}
\item $ (A^{\bullet},L) $ satisfies the $ dd^\Lambda $-lemma, i.e.,
$$
\mathrm{Ker}\,d\cap \mathrm{Ker}\,d^\Lambda\cap
 (\mathrm{Im}\,d+\mathrm{Im}\,d^\Lambda) = \mathrm{Im}\,dd^\Lambda\,
$$
\item There is a Lefschetz harmonic representative in each cohomology class of $ H^{\bullet}_d $;
\item $ (H_d^{\bullet},L) $ satisfies the HLC;
\item $ (H_{d^\Lambda}^{\bullet},\Lambda) $ satisfies the HLC.
\end{itemize}
\end{theorem}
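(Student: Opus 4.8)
The plan is to derive Theorem~\ref{Symplectic} from the Demailly-Griffiths-K\"ahler identities of Theorem~\ref{DGK-identity} together with an abstract Hodge-theoretic package, mirroring the classical symplectic argument of Brylinski and Mathai-Wu but in the purely algebraic Lefschetz setting of \cite{Tomassini-Wang}. First I would record the basic consequences of $[d,L]=0$: by Theorem~\ref{DGK-identity} the bracket condition $[L,[d,L]]=0$ is automatic, so $[d,\Lambda]=d^\Lambda$ and $[d^\Lambda,L]=d$, whence $d$, $d^\Lambda$ anticommute and $(A,d,d^\Lambda)$ is a double complex; one also gets the $\mathfrak{sl}(2)$-triple $(L,\Lambda,H)$ commuting appropriately with $d$ and $d^\Lambda$. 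The key structural input is that, because $*_L$ conjugates $d$ into $\pm d^\Lambda$ and commutes with the Laplacian $\Delta_d:=dd^\Lambda+d^\Lambda d$ up to sign bookkeeping, the space $\mathcal{H}^\bullet_L=\mathrm{Ker}\,d\cap\mathrm{Ker}\,d^\Lambda$ is preserved by $L$ and by $\Lambda$ and inherits the $\mathfrak{sl}(2)$-action; the HLC for $(\mathcal{H}^\bullet_L,L)$ and $(\mathcal{H}^\bullet_L,\Lambda)$ is then just the standard Lefschetz $\mathfrak{sl}(2)$-representation theory applied to this finite-dimensional module, exactly as in \cite[Theorem 3.3]{Tomassini-Wang}.

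For the chain of equivalences I would argue in a cycle. The implication ``$dd^\Lambda$-lemma $\Rightarrow$ Lefschetz harmonic representatives in $H^\bullet_d$'' is the abstract $dd^\Lambda$-lemma bootstrap: given $d\alpha=0$, one shows using the lemma that $\alpha$ differs from a $d$-closed, $d^\Lambda$-closed form by something in $\mathrm{Im}\,d$, via the usual diagram chase (write $d^\Lambda\alpha = d\beta + d^\Lambda\gamma$-type manipulations, then correct). From ``Lefschetz harmonic representatives exist'' one gets a surjection $\mathcal{H}^\bullet_L\to H^\bullet_d$, and composing with the HLC on $\mathcal{H}^\bullet_L$ already established gives HLC on $(H^\bullet_d,L)$; symmetrically, applying $*_L$ (which swaps $d$ and $d^\Lambda$ and swaps $L$ and $\Lambda$) transports this to HLC on $(H^\bullet_{d^\Lambda},\Lambda)$. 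Finally, to close the loop, ``HLC on $(H^\bullet_d,L)$ $\Rightarrow$ $dd^\Lambda$-lemma'': here I would run the standard argument that the Lefschetz decomposition on cohomology forces every $d$-closed, $d^\Lambda$-closed, $d$-or-$d^\Lambda$-exact form to be $dd^\Lambda$-exact, reducing to primitive components where $d^\Lambda$ acts, up to sign and the operator $H$, as $\Lambda d L^{-1}$ on the relevant weight spaces, so that $d^\Lambda$-closedness of a primitive $d$-exact class collapses it.

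The main obstacle I anticipate is not any single implication but the careful sign and weight bookkeeping needed to make the abstract $*_L$ play the role the Hodge star plays in the Riemannian story: one must verify that $*_L$ genuinely intertwines $(d,L)$ with $(\pm d^\Lambda,\Lambda)$ on each graded piece, that it is (up to sign) an involution as stated, and that it commutes with the relevant Laplacian so that it descends to $\mathcal{H}^\bullet_L$ and to the cohomologies. Once that intertwining is nailed down, the symmetry between $d$ and $d^\Lambda$ — and hence between the last two bullet points — becomes formal, and the representation-theoretic HLC statements follow from the commutation of $L,\Lambda,H$ with the Laplacian. I would therefore front-load a lemma isolating these $*_L$-properties and the commutation relations, then treat the four-way equivalence as a short diagram-chase cycle built on \cite[Theorems 3.3, 3.5]{Tomassini-Wang}, to which the present statement is essentially a repackaging.
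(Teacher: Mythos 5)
Your proposal is correct and matches the paper's treatment: the paper offers no independent proof of this statement but simply records it as a summary of \cite[Theorems 3.3, 3.5]{Tomassini-Wang}, which is exactly the source you anchor your sketch on, and your reconstruction of the underlying $\mathfrak{sl}(2)$/diagram-chase argument is the standard one and is sound. (One cosmetic point: under $[d,L]=0$ the operator $dd^\Lambda+d^\Lambda d$ vanishes identically, so it is not a useful ``Laplacian''; your actual definition of $\mathcal{H}^\bullet_L$ as $\mathrm{Ker}\,d\cap\mathrm{Ker}\,d^\Lambda$ is the right one and is what the argument uses.)
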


\medskip
Now we wish to introduce in the picture (a generalization of) the Hodge star operator, in order to do so, we need a complex structure on our Lefschetz space.

\begin{definition}
A \emph{Lefschetz (differential) graded algebra} is a (differential) Lefschetz space $ A=\bigoplus_{p=0}^{2n}A^p $ which is also a graded algebra that is generated by $ A^1 $ over $ \C $.
\end{definition}

Let $ A $ be a Lefschetz graded algebra and assume that $ A^1 $ is equipped with an endomorphism $ \mathcal{J} $ such that $\mathcal{J}^2=-I$. We extend the action of $ \mathcal{J} $ on $ A $ by setting on homogeneous elements 
\[
\mathcal{J}(\alpha_1\cdots\alpha_k)=\mathcal{J}\alpha_1 \cdots \mathcal{J}\alpha_k\,, \qquad \text{for every } \alpha_1,\dots,\alpha_k\in A^1\,,
\]
and then extending by $ \C $-linearity.

We make the assumption that $ \mathcal{J} L= L \mathcal{J} $ and consequently introduce a generalization of the Hodge star operator by setting
\begin{equation}\label{Hod}
	*:=\mathcal{J} *_L=*_L \mathcal{J}
\end{equation}
or equivalently
\begin{equation}\label{Hodge}
*\frac{1}{k!}L^k\beta:=(-1)^{1+2+\dots+p}\frac{1}{(n-p-k)!}L^{n-p-k}\mathcal{J} \beta\,,
\end{equation}
for a primitive $ \beta\in A^{p} $ and then extend the definition on all $ A $ by bilinearity via the Lefschetz decomposition \eqref{Lef_dec}. It follows that
\[
*^2\vert_{A^p}=\mathcal{J}^2\vert_{A^p}=(-1)^{p}\,.
\]

\begin{rem}
	Let $ (M,J,\omega) $ be an almost K\"ahler manifold, namely $\omega$ is a symplectic structure on a smooth manifold $M$ and $J$ is a compatible almost complex structure. Clearly when $J$ is integrable (and so $(M,J)$ is a complex manifold) then $ (M,J,\omega) $ is a K\"ahler manifold. Set $ L=\omega \wedge - $ for the usual Lefschetz operator and let $ A=\bigoplus_{p=0}^{2n}A^p $ be the Lefschetz graded algebra of differential forms on $ M $. The almost complex structure $ J\colon TM \to TM $ naturally induces a complex structure $ \mathcal{J} $ on $ A^1 $. Since $ \omega $ is a $ (1,1) $-form we have $ \mathcal{J}L=L\mathcal{J} $ and the description above is coherent with the well-known almost K\"ahler case. Indeed, formula \eqref{Hodge}, where $ * $ is the usual Hodge operator, is sometimes referred to as the Weil relation \cite{Weil}.\\
\end{rem}

Now, suppose $ A $ is equipped with a differential $ d $. Consider the dual Lefschetz operator
\[
\Lambda=*_L^{-1} L*_L=*^{-1}\mathcal{J} L\mathcal{J}^{-1} *= *^{-1} L*
\]
and define as before the ``Lefschetz adjoint'' of $ d $, i.e. $ d^\Lambda\in \mathrm{End}(A) $ given by $ d^\Lambda\vert_{A^p}:=(-1)^{p+1}*_Ld*_L $. Then, by Theorem \ref{DGK-identity}, if $ [d,L]=0 $ we have $ d^\Lambda=[d,\Lambda] $ and $ d=[d^\Lambda,L] $.

We may take the ``Hodge adjoints'' 
\[
d^*=-*d*\,, \qquad d^{\Lambda*}=-*d^{\Lambda}*\,,
\]
and obtain also $ d^*= [\Lambda,d^{\Lambda*}] $.

Now, we consider the following operators and we aim to study the relations between them:
\[
\Delta_d=dd^*+d^*d\,, \qquad \Delta_{d^\Lambda}=d^{\Lambda}d^{\Lambda*}+d^{\Lambda*}d^{\Lambda}\,,
\]
\[
\Delta^{\mathrm{BC}}_{d^\Lambda}=d^*d+d^{\Lambda*}d^\Lambda+d d^\Lambda d^{\Lambda*} d^*+d^{\Lambda*}dd^*d^\Lambda+d^*d^\Lambda d^{\Lambda*}d+d^{\Lambda*}d^*dd^\Lambda\,,
\]
\[
\Delta^{\mathrm{BC}}_{d^{\Lambda*}}=d^*d+d^{\Lambda}d^{\Lambda*}+d d^{\Lambda*} d^{\Lambda} d^*+d^{\Lambda}dd^*d^{\Lambda*}+d^*d^{\Lambda*} d^{\Lambda}d+d^{\Lambda}d^*dd^{\Lambda*}\,.
\]
We will denote with $\mathcal{H}^{\bullet}_{d}$ and $\mathcal{H}^{\bullet}_{d^\Lambda}$ the kernels of $\Delta_d$ and $\Delta_{d^\Lambda}$ respectively. All these operators where originally introduced for symplectic manifolds in \cite{tseng-yau}.

Since it will be useful in the following we recall that for the graded bracket $[A,B]=AB-(-1)^{\text{deg}\,A\,\text{deg}\,B}BA$, the graded Jacobi identity holds
$$
[A,[B,C]]=[[A,B],C]+(-1)^{\text{deg}\,A\,\text{deg}\,B}[B,[A,C]]\,.
$$

\begin{prop}\label{Laplacians}
In the previous assumptions it holds
\[
\Delta_d=\Delta_{d^\Lambda}-[\Lambda,[d,d^{\Lambda*}]]\,.
\]
In particular, if $ [d,d^{\Lambda*}]=0 $ the kernels of $\Delta_d$  and $\Delta_{d^\Lambda}$ coincide, namely for every $p$ we have
$$
\mathcal{H}^{p}_{d}=\mathcal{H}^{p}_{d^\Lambda}\,.
$$
\end{prop}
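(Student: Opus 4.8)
The plan is to establish the single operator identity $\Delta_d=\Delta_{d^\Lambda}-[\Lambda,[d,d^{\Lambda*}]]$ purely algebraically, using the Demailly--Griffiths--K\"ahler identities from Theorem \ref{DGK-identity} together with the graded Jacobi identity, and then read off the statement about kernels from the fact that $\Delta_d$ and $\Delta_{d^\Lambda}$ are sums of squares of (formally) self-adjoint-paired operators. First I would record the basic commutation relations available to me in this setting: since $[d,L]=0$, Theorem \ref{DGK-identity} gives $d^\Lambda=[d,\Lambda]$ and $d=[d^\Lambda,L]$; applying the Hodge star $*$ (and using $*^2|_{A^p}=(-1)^p$, $\Lambda=*^{-1}L*$, $d^*=-*d*$, $d^{\Lambda*}=-*d^\Lambda*$) to these produces the dual pair $d^{\Lambda*}=-[L,d^*]$ and, conjugating the first relation, $d^*=[\Lambda,d^{\Lambda*}]$. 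These are the building blocks; I also note $[d^*,\Lambda]=0$ follows by starring $[d,L]=0$, so $d$ and $\Lambda$ commute on the ``$*$-conjugated'' side exactly as $d$ and $L$ do.

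Next I would compute $\Delta_d=dd^*+d^*d$ by substituting $d^*=[\Lambda,d^{\Lambda*}]=\Lambda d^{\Lambda*}-d^{\Lambda*}\Lambda$ (with the appropriate sign from the graded bracket, $d^{\Lambda*}$ having odd degree) and then expanding. The point is to move all $\Lambda$'s to the outside and all $d,d^{\Lambda*}$ to the inside, so that what remains is $\Delta_{d^\Lambda}=d^\Lambda d^{\Lambda*}+d^{\Lambda*}d^\Lambda$ plus a ``commutator remainder.'' Concretely I would write $\Delta_d=d[\Lambda,d^{\Lambda*}]+[\Lambda,d^{\Lambda*}]d$ and use the graded Jacobi identity in the form $[d,[\Lambda,d^{\Lambda*}]]=[[d,\Lambda],d^{\Lambda*}]\pm[\Lambda,[d,d^{\Lambda*}]]$; since $[d,\Lambda]=d^\Lambda$, the first term on the right is $[d^\Lambda,d^{\Lambda*}]=\Delta_{d^\Lambda}$, and the second is precisely the $-[\Lambda,[d,d^{\Lambda*}]]$ correction. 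A small amount of care with the anticommutator (rather than commutator) structure of $\Delta_d$ and with signs $(-1)^{\deg}$ will convert the ``$\Delta_d=d\,d^*+d^*d$'' expression into ``$[d,d^*]$ up to sign,'' after which the Jacobi manipulation is clean.

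The main obstacle I anticipate is purely bookkeeping: keeping the graded signs straight when passing between the anticommutators defining the Laplacians and the graded commutators $[\cdot,\cdot]$ in which the Jacobi identity is phrased, since $d$, $d^\Lambda$, $d^{\Lambda*}$ all have odd degree while $L$, $\Lambda$ have even degree. There is no conceptual difficulty beyond Theorem \ref{DGK-identity} and the Jacobi identity --- in particular I do not need $d^2=0$ or $(d^\Lambda)^2=0$ for the identity itself, only $[d,L]=0$. Finally, for the ``in particular'' clause: if $[d,d^{\Lambda*}]=0$ then $\Delta_d=\Delta_{d^\Lambda}$ as operators, hence they have the same kernel on each $A^p$, i.e. $\mathcal{H}^p_d=\mathcal{H}^p_{d^\Lambda}$; one may additionally remark that $\mathrm{Ker}\,\Delta_d=\mathrm{Ker}\,d\cap\mathrm{Ker}\,d^*$ and $\mathrm{Ker}\,\Delta_{d^\Lambda}=\mathrm{Ker}\,d^\Lambda\cap\mathrm{Ker}\,d^{\Lambda*}$ whenever the relevant pairing is definite, so that the equality of harmonic spaces is genuine and not merely formal.
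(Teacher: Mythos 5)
Your proposal is correct and follows essentially the same route as the paper: write $\Delta_d=[d,d^*]=[d,[\Lambda,d^{\Lambda*}]]$ using $d^*=[\Lambda,d^{\Lambda*}]$, apply the graded Jacobi identity to peel off $[[d,\Lambda],d^{\Lambda*}]=[d^\Lambda,d^{\Lambda*}]=\Delta_{d^\Lambda}$ plus the $[\Lambda,[d,d^{\Lambda*}]]$ correction, and conclude equality of kernels from equality of operators when $[d,d^{\Lambda*}]=0$. The only loose end is the ``$\pm$'' you leave in the Jacobi step, which should be pinned down (it is immaterial for the application since that term vanishes under the hypothesis); otherwise the argument is the paper's.
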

\begin{proof}
Using $ [\Lambda,d^{\Lambda*}]=d^* $ and $[d,\Lambda]=d^\Lambda$ we obtain
$$
\Delta_{d}=[d,d^*]=[d,[\Lambda,d^{\Lambda*}]]=[[d,\Lambda],d^{\Lambda*}]-[\Lambda,[d,d^{\Lambda*}]]=[d^\Lambda,d^{\Lambda*}]-[\Lambda,[d,d^{\Lambda*}]]=\Delta_{d^\Lambda}-[\Lambda,[d,d^{\Lambda*}]]\,. \qedhere
$$
\end{proof}

\begin{prop}\label{Bott-Chern}
If $ [d,d^{\Lambda*}]=0 =[d,L] $, then
\begin{align*}
\Delta^{\mathrm{BC}}_{d^\Lambda}&= \Delta_{d^\Lambda}\Delta_{d^\Lambda}+d^*d+d^{\Lambda*}d^\Lambda\\
&=\Delta^{\mathrm{BC}}_{d^{\Lambda*}}+d^{\Lambda*}d^\Lambda-d^\Lambda d^{\Lambda*}\,.
\end{align*}
\end{prop}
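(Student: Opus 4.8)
The plan is to prove both identities by purely formal manipulations with the graded commutator, using only the identities already available under the stated hypotheses, namely $d^\Lambda=[d,\Lambda]$, $d^*=[\Lambda,d^{\Lambda*}]$, $d=[d^\Lambda,L]$, $d^{\Lambda*}=[L,d^*]$ (these follow from Theorem \ref{DGK-identity} together with $[d,L]=0$ and an application of the Hodge star), the vanishing $[d,d^{\Lambda*}]=0$, and the conclusion of Proposition \ref{Laplacians} that under these hypotheses $\Delta_d=\Delta_{d^\Lambda}$ and hence $\mathcal{H}^\bullet_d=\mathcal{H}^\bullet_{d^\Lambda}$. I would also record the ``dual'' brackets obtained by conjugating with $*$: from $[d,d^{\Lambda*}]=0$ one gets $[d^*,d^\Lambda]=0$ (apply $-*(\cdot)*$), so the mixed terms $dd^\Lambda$, $d^\Lambda d$, $d^*d^{\Lambda*}$, $d^{\Lambda*}d^*$ are the anticommuting ones, while $dd^{\Lambda*}$, $d^{\Lambda*}d$, $d^*d^\Lambda$, $d^\Lambda d^*$ commute. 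These sign bookkeeping facts are what make the six-term expression collapse.

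First I would establish the first equality $\Delta^{\mathrm{BC}}_{d^\Lambda}=\Delta_{d^\Lambda}\Delta_{d^\Lambda}+d^*d+d^{\Lambda*}d^\Lambda$. Write out $\Delta_{d^\Lambda}\Delta_{d^\Lambda}=(d^\Lambda d^{\Lambda*}+d^{\Lambda*}d^\Lambda)^2$ and expand into four quadratic-in-$\Delta$ pieces. Two of these are $d^\Lambda d^{\Lambda*}d^{\Lambda*}d^\Lambda$ (which vanishes since $(d^{\Lambda*})^2=0$) and $d^{\Lambda*}d^\Lambda d^\Lambda d^{\Lambda*}$ (which vanishes since $(d^\Lambda)^2=0$); the surviving cross terms are $d^\Lambda d^{\Lambda*}d^{\Lambda*}d^\Lambda$... more carefully, the nonzero ones after using $(d^\Lambda)^2=(d^{\Lambda*})^2=0$ are $d^\Lambda d^{\Lambda*}\,d^{\Lambda*}d^\Lambda=0$ too, so really only $d^\Lambda d^{\Lambda*}d^\Lambda d^{\Lambda*}=0$ and $d^{\Lambda*}d^\Lambda d^{\Lambda*}d^\Lambda=0$ vanish as well — in fact $\Delta_{d^\Lambda}^2$ has no surviving terms of the naive form, and one must instead use the Kähler-type identity $\Delta_{d^\Lambda}=\Delta_{d^{\Lambda*}}$-analogue to rewrite it. So the cleaner route: substitute $d^{\Lambda*}=[\Lambda,d^*]$-type relations to re-express the last four terms of $\Delta^{\mathrm{BC}}_{d^\Lambda}$, i.e. $dd^\Lambda d^{\Lambda*}d^*$, $d^{\Lambda*}dd^*d^\Lambda$, $d^*d^\Lambda d^{\Lambda*}d$, $d^{\Lambda*}d^*dd^\Lambda$, and show they combine with the first two terms $d^*d+d^{\Lambda*}d^\Lambda$ exactly into $(d^\Lambda d^{\Lambda*}+d^{\Lambda*}d^\Lambda)^2+d^*d+d^{\Lambda*}d^\Lambda$. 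Concretely, I would insert $d^*d = \Delta_d - dd^*$ and $d^*d=\Delta_{d^\Lambda}-dd^*$ (using Proposition \ref{Laplacians}) only where it helps, and push all $d^*$'s and $d$'s through $d^\Lambda,d^{\Lambda*}$ using the (anti)commutation facts above until the four quartic terms reorganize as $\Delta_{d^\Lambda}^2$ minus the quadratic correction.

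For the second equality $\Delta_{d^\Lambda}\Delta_{d^\Lambda}+d^*d+d^{\Lambda*}d^\Lambda=\Delta^{\mathrm{BC}}_{d^{\Lambda*}}+d^{\Lambda*}d^\Lambda-d^\Lambda d^{\Lambda*}$, note $\Delta^{\mathrm{BC}}_{d^{\Lambda*}}$ is obtained from $\Delta^{\mathrm{BC}}_{d^\Lambda}$ by the swap $d^\Lambda\leftrightarrow d^{\Lambda*}$ in the quartic terms while keeping $d^*d$ and replacing $d^{\Lambda*}d^\Lambda$ by $d^\Lambda d^{\Lambda*}$ in the quadratic part. So it suffices to show the difference $\Delta^{\mathrm{BC}}_{d^\Lambda}-\Delta^{\mathrm{BC}}_{d^{\Lambda*}}$ equals $(d^{\Lambda*}d^\Lambda)-(d^\Lambda d^{\Lambda*})+(d^{\Lambda*}d^\Lambda-d^\Lambda d^{\Lambda*})$... i.e. that the four quartic terms are invariant under $d^\Lambda\leftrightarrow d^{\Lambda*}$ up to the relevant quadratic correction. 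Using $[d,d^{\Lambda*}]=0$, $[d^*,d^\Lambda]=0$ and the symplectic-style identity $\Delta_{d^\Lambda}=-\,$(its $d^{\Lambda*}$ counterpart) that follows from Theorem \ref{Symplectic}, the quartic block $dd^\Lambda d^{\Lambda*}d^*+d^{\Lambda*}dd^*d^\Lambda+d^*d^\Lambda d^{\Lambda*}d+d^{\Lambda*}d^*dd^\Lambda$ should be shown symmetric under the swap, leaving exactly $d^{\Lambda*}d^\Lambda-d^\Lambda d^{\Lambda*}$ as the discrepancy from the quadratic terms. The main obstacle, as usual with Tseng–Yau-type Bott–Chern Laplacians, is the sign and ordering bookkeeping in the six-term expressions — there is no conceptual difficulty once the commutation relations $[d,d^{\Lambda*}]=0$, $[d^*,d^\Lambda]=0$, $(d^\Lambda)^2=(d^{\Lambda*})^2=0$ and $\Delta_d=\Delta_{d^\Lambda}$ are in hand, but keeping the $(-1)^{\deg}$ factors straight through the graded Jacobi identity is where care is needed; I would organize the computation by systematically moving every $d$ and $d^*$ to a fixed side.
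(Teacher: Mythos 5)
Your overall plan is in the right spirit, but it stalls exactly where the one genuine idea of the proof is needed, and it contains a concrete error at that point. Expanding $\Delta_{d^\Lambda}^2=(d^\Lambda d^{\Lambda*}+d^{\Lambda*}d^\Lambda)^2$ directly, the two cross terms vanish because $(d^\Lambda)^2=(d^{\Lambda*})^2=0$, but the two diagonal terms $d^\Lambda d^{\Lambda*}d^\Lambda d^{\Lambda*}$ and $d^{\Lambda*}d^\Lambda d^{\Lambda*}d^\Lambda$ do \emph{not} vanish; your conclusion that ``$\Delta_{d^\Lambda}^2$ has no surviving terms of the naive form'' is false, and the subsequent plan to push operators around ``until the four quartic terms reorganize'' never says how to get from those two diagonal terms to the four quartic terms of $\Delta^{\mathrm{BC}}_{d^\Lambda}$. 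The missing step, which is the whole content of the paper's proof of the first identity, is to use Proposition \ref{Laplacians} to replace one factor: $\Delta_{d^\Lambda}\Delta_{d^\Lambda}=\Delta_d\Delta_{d^\Lambda}=(dd^*+d^*d)(d^\Lambda d^{\Lambda*}+d^{\Lambda*}d^\Lambda)$. This mixed product expands into $dd^*d^\Lambda d^{\Lambda*}+dd^*d^{\Lambda*}d^\Lambda+d^*dd^\Lambda d^{\Lambda*}+d^*dd^{\Lambda*}d^\Lambda$, and each summand becomes one of the four quartic terms of $\Delta^{\mathrm{BC}}_{d^\Lambda}$ after two applications of the anticommutation relations $[d,d^{\Lambda*}]=[d,d^{\Lambda}]=[d^*,d^\Lambda]=[d^*,d^{\Lambda*}]=0$ (the last two being adjoints of the first two, which you correctly record); for instance $dd^*d^\Lambda d^{\Lambda*}=-dd^\Lambda d^*d^{\Lambda*}=dd^\Lambda d^{\Lambda*}d^*$. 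Without the substitution $\Delta_{d^\Lambda}\mapsto\Delta_d$ in one factor the computation does not close.

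For the second identity your strategy --- show that the quartic block of $\Delta^{\mathrm{BC}}_{d^\Lambda}$ coincides with that of $\Delta^{\mathrm{BC}}_{d^{\Lambda*}}$, so the two operators differ only through the quadratic terms, yielding $d^{\Lambda*}d^\Lambda-d^\Lambda d^{\Lambda*}$ --- is exactly the paper's, and it goes through by moving each $d$ and $d^*$ past each $d^\Lambda$ and $d^{\Lambda*}$ (two sign flips per term, e.g.\ $dd^\Lambda d^{\Lambda*}d^*=d^\Lambda d\,d^*d^{\Lambda*}$). However, the ``symplectic-style identity $\Delta_{d^\Lambda}=-\,$(its $d^{\Lambda*}$ counterpart)'' that you invoke from Theorem \ref{Symplectic} is not a real identity ($\Delta_{d^\Lambda}$ is manifestly symmetric under $d^\Lambda\leftrightarrow d^{\Lambda*}$) and is not needed; only the four anticommutation relations are. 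As it stands the proposal is an outline resting on one false intermediate claim and one spurious lemma rather than a proof, even though the intended route is essentially the correct one.
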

\begin{proof}
Notice that under our assumptions we also have $[d^*,d^\Lambda]=0$ and $[d^*,d^{\Lambda*}]=0$. We start by considering $ \Delta_{d^\Lambda}\Delta_{d^\Lambda}$. By Proposition \ref{Laplacians}
$$
 \Delta_{d^\Lambda}\Delta_{d^\Lambda}=
\Delta_{d}\Delta_{d^\Lambda}=
dd^*d^\Lambda d^{\Lambda*}+dd^*d^{\Lambda*}d^\Lambda+
d^*dd^\Lambda d^{\Lambda*}+
d^*dd^{\Lambda*}d^\Lambda=(I)+(II)+(III)+(IV)
$$
Now we will treat the four terms separately. Using that $ [d,d^{\Lambda*}]=0 $ and $[d,d^\Lambda]=0$
\begin{align*}
(I) &=dd^*d^\Lambda d^{\Lambda*}=-
dd^\Lambda d^*d^{\Lambda*}=d d^\Lambda d^{\Lambda*} d^*\,,&
(II) &=dd^*d^{\Lambda*}d^\Lambda=-dd^{\Lambda*}d^*d^\Lambda=d^{\Lambda*}dd^*d^\Lambda\,,\\
(III)&=d^*dd^\Lambda d^{\Lambda*}=-d^*d^\Lambda dd^{\Lambda*}=d^*d^\Lambda d^{\Lambda*}d\,, &
(IV)&= d^*dd^{\Lambda*}d^\Lambda=-d^*d^{\Lambda*}dd^\Lambda=d^{\Lambda*}d^*dd^\Lambda \,.
\end{align*}

Now, putting the four terms together we have
$$
 \Delta_{d^\Lambda}\Delta_{d^\Lambda}=
\Delta_{d}\Delta_{d^\Lambda}=
d d^\Lambda d^{\Lambda*} d^*+d^{\Lambda*}dd^*d^\Lambda+d^*d^\Lambda d^{\Lambda*}d+d^{\Lambda*}d^*dd^\Lambda =
\Delta^{\mathrm{BC}}_{d^\Lambda}-d^*d-d^{\Lambda*}d^\Lambda\,.
$$
Furthermore using again that $ [d,d^{\Lambda*}]=0 $ and $[d,d^\Lambda]=0$ we obtain
\begin{align*}
\Delta^{\mathrm{BC}}_{d^\Lambda}=& d^*d+d^{\Lambda*}d^\Lambda+d d^\Lambda d^{\Lambda*} d^*+d^{\Lambda*}dd^*d^\Lambda+d^*d^\Lambda d^{\Lambda*}d+d^{\Lambda*}d^*dd^\Lambda\\
=& d^*d+d^{\Lambda*}d^\Lambda+d^\Lambda d  d^*d^{\Lambda*}+dd^{\Lambda*}d^\Lambda d^*+d^\Lambda d^*d d^{\Lambda*}+d^*d^{\Lambda*}d^\Lambda d\\
=& d^{\Lambda*}d^\Lambda-d^\Lambda d^{\Lambda*}+\Delta^{\mathrm{BC}}_{d^{\Lambda*}}
\end{align*}
as desired.
\end{proof}

\begin{rem}
Let $(M,\omega,J,g)$ be an almost-K\"ahler manifold, then
$[d,d^{\Lambda*}]=0$ if and only if $[d,d^c]=0$ if and only if $J$ is integrable. In such a case $(M,\omega,J,g)$ is a K\"ahler manifold and we recover the usual equalities for the Laplacians in
Propositions \ref{Laplacians} and \ref{Bott-Chern}.
\end{rem}

\section{Hodge theory on HKT manifolds}\label{Sec:HKT}

\subsection{Lefschetz spaces on HKT manifolds}
We specialize the results of the previous section to the cohomology of HKT manifolds.

Let $(M,I,J,K)$ be a $4n$-dimensional compact hypercomplex manifold and $\Omega\in A^{2,0}(M)$ a non-degenerate $(2,0)$-form on $(M,I)$. Then, as usual we set
$$
L\colon A^{r,0}(M)\to A^{r+2,0}(M)\,,\qquad
L:=\Omega\wedge-
$$
for the Lefschetz operator.\\
Then $(A^{\bullet,0}(M), L)$ is a Lefschetz space.\\
Moreover, if we consider as differential operator $\del$ (always taken with respect to $I$), since $I$ is integrable, $\del^2=0$ and so $(A^{\bullet,0}(M), L,\del)$ defines a Lefschetz complex. If $\del\Omega=0$ then we have
$$
[\del,L]=0.
$$

Denote with $ \mathcal{H}^{p,0}_L(M) $ the space of \emph{Lefschetz harmonic} $ (p,0) $-forms, i.e. forms $ \alpha \in A^{p,0}(M) $ such that $ \partial \alpha=0=\partial^\Lambda\alpha $, where $ \partial^\Lambda=[\partial,\Lambda] $.

 We can therefore apply the results of the previous section to infer

\begin{theorem}\label{thm: deldellambda-lemma}
Let $(M,I,J,K,\Omega)$ be a compact $4n$-dimensional hypercomplex manifold and $\Omega\in A^{2,0}(M)$ a non-degenerate $(2,0)$-form on $(M,I)$ such that $\del\Omega=0$.
Then $ (\mathcal{H}^{\bullet,0}_L(M),L) $ and  $ (\mathcal{H}^{\bullet,0}_L(M),\Lambda) $ satisfy the HLC. Furthermore the following are equivalent:
\begin{itemize}
\item $ (A^{\bullet,0}(M),L) $ satisfies the $ \partial\partial^\Lambda $-lemma, i.e.,
$$
\mathrm{Ker}\,\del\cap \mathrm{Ker}\,\del^\Lambda\cap
 (\mathrm{Im}\,\del+\mathrm{Im}\,\del^\Lambda) = \mathrm{Im}\,\del\del^\Lambda\,;
$$

\item There is a Lefschetz harmonic representative in each Dolbeault cohomology class of $ H^{\bullet,0}_\partial(M) $;
\item $ (H_\partial^{\bullet,0}(M),L) $ satisfies the HLC;
\item $ (H_{\partial^\Lambda}^{\bullet,0}(M),\Lambda) $ satisfies the HLC.
\end{itemize}
\end{theorem}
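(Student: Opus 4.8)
The plan is to apply Theorem~\ref{Symplectic} to the Lefschetz complex $(A^{\bullet,0}(M), L, \partial)$. The hypotheses of Theorem~\ref{Symplectic} require that we have a Lefschetz complex with $[\partial, L] = 0$; both facts are already recorded in the paragraph preceding the statement, since $\partial^2 = 0$ follows from integrability of $I$, and $[\partial, L] = 0$ follows from $\partial\Omega = 0$ (indeed $[\partial, L]\alpha = \partial(\Omega\wedge\alpha) - \Omega\wedge\partial\alpha = (\partial\Omega)\wedge\alpha = 0$, using that $\Omega$ is a $(2,0)$-form so no sign intervenes). With the identification $d = \partial$, $d^\Lambda = \partial^\Lambda = [\partial, \Lambda]$, the notion of Lefschetz harmonic $p$-form in Theorem~\ref{Symplectic} becomes exactly the notion of Lefschetz harmonic $(p,0)$-form defined just above, namely $\alpha \in A^{p,0}(M)$ with $\partial\alpha = 0 = \partial^\Lambda\alpha$. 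Hence $\mathcal{H}^{p}_L$ of the abstract setting is precisely $\mathcal{H}^{p,0}_L(M)$.

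Given this dictionary, the four bulleted equivalences and the two HLC statements for $(\mathcal{H}^{\bullet,0}_L(M), L)$ and $(\mathcal{H}^{\bullet,0}_L(M), \Lambda)$ are literal translations of the corresponding assertions in Theorem~\ref{Symplectic}: the $dd^\Lambda$-lemma becomes the $\partial\partial^\Lambda$-lemma, the cohomology $H^{\bullet}_d$ becomes the Dolbeault cohomology $H^{\bullet,0}_\partial(M)$, and $H^{\bullet}_{d^\Lambda}$ becomes $H^{\bullet,0}_{\partial^\Lambda}(M)$. The existence of a Lefschetz harmonic representative in each class of $H^{\bullet,0}_\partial(M)$ is the direct analogue of the second bullet. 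So the proof is essentially one sentence: ``This is an immediate consequence of Theorem~\ref{Symplectic} applied to the Lefschetz complex $(A^{\bullet,0}(M), L, \partial)$, which satisfies $[\partial, L] = 0$ since $\partial\Omega = 0$.''

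I do not anticipate a genuine obstacle here, since all the analytic content (finite-dimensionality of the groups, existence of harmonic representatives in the honest elliptic sense) is not needed---Theorem~\ref{Symplectic} is purely algebraic and applies to any Lefschetz complex. The one point worth a remark is that Theorem~\ref{Symplectic} and the abstract machinery of Section~\ref{Sec:Lef} do not require compactness of $M$ at all; compactness is listed in the hypothesis presumably because it is the case of interest and because later results (Theorem~\ref{Teor:harmonic}) do use it, but it plays no role in this particular deduction. If one wanted to be scrupulous, the only thing to double-check is the sign in verifying $[\partial, L] = 0$ from $\partial\Omega = 0$, and that $A^{\bullet,0}(M)$ is finite-length as a graded object (it is: it ranges over $p = 0, \dots, 2n$), so that the Lefschetz-space axioms are met; both are routine. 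Thus I would simply invoke Theorem~\ref{Symplectic} and translate the notation.
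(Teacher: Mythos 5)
Your proposal matches the paper exactly: the authors give no separate proof, stating only that $(A^{\bullet,0}(M),L,\partial)$ is a Lefschetz complex with $[\partial,L]=0$ when $\partial\Omega=0$ and then invoking Theorem~\ref{Symplectic} verbatim, which is precisely your argument. The only detail worth being slightly more explicit about (which the paper also glosses over) is that the Lefschetz-space axiom $L^{n-p}\colon A^{p,0}\to A^{2n-p,0}$ being an isomorphism is the standard pointwise linear-algebra consequence of the non-degeneracy of $\Omega$, not merely of the grading being finite.
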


Moreover, by the general results in the previous section we obtain
\begin{prop}
Let $(M,I,J,K,\Omega)$ be a compact $4n$-dimensional hypercomplex manifold and $\Omega\in A^{2,0}(M)$ a non-degenerate $(2,0)$-form on $(M,I)$ such that $\del\Omega=0$.
Then,
\[
\Delta_{\del}=\Delta_{\del^\Lambda}-[\Lambda,[\del,\del^{\Lambda*}]]\,.
\]
In particular, if $ [\del,\del^{\Lambda*}]=0 $ 
\[
\Delta_{\del}=\Delta_{\del^\Lambda},
\]
and for every $p$ we have
$$
\mathcal{H}^{p,0}_{\del}(M)=\mathcal{H}^{p,0}_{\del^\Lambda}(M)\,.
$$
Moreover, if $ [\del,\del^{\Lambda*}]=0$
$$
\Delta^{\mathrm{BC}}_{\del^\Lambda}=\Delta^{\mathrm{BC}}_{\del^{\Lambda*}}+\del^{\Lambda*}\del^\Lambda-\del^\Lambda \del^{\Lambda*}= \Delta_{\del^\Lambda}\Delta_{\del^\Lambda}+\del^*\del+\del^{\Lambda*}\del^\Lambda\,.
$$
\end{prop}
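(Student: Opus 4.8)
The plan is to obtain the statement as a direct specialization of Propositions \ref{Laplacians} and \ref{Bott-Chern} to the Lefschetz differential graded algebra $(A^{\bullet,0}(M),L,\partial)$, with $L=\Omega\wedge-$ and $\partial$ the conjugate Dolbeault operator of $I$. That this triple is a Lefschetz complex and that $[\partial,L]=0$ (the latter because $\partial\Omega=0$, so $\partial(\Omega\wedge\beta)=\Omega\wedge\partial\beta$) have already been recorded, so the only missing ingredient of the framework of Section \ref{Sec:Lef} is a complex structure on $A^{1,0}(M)$ compatible with $L$. For this I would use the second complex structure $J$, which induces an endomorphism $\mathcal{J}$ of $A^{1,0}(M)$ with $\mathcal{J}^2=-\mathrm{Id}$ (explicitly $\mathcal{J}\alpha=\overline{J\alpha}$); extended multiplicatively to $A^{\bullet,0}(M)$ it satisfies $\mathcal{J}L=L\mathcal{J}$ because $\mathcal{J}\Omega=\overline{J\Omega}=\overline{\bar\Omega}=\Omega$, the identity $J\Omega=\bar\Omega$ being the q-reality of $\Omega$ from the hyperhermitian setting. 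Granting this, the generalized Hodge star $*=\mathcal{J}*_L$ and the adjoints $\partial^*=-*\partial*$, $\partial^{\Lambda*}=-*\partial^\Lambda*$ are defined exactly as in Section \ref{Sec:Lef}; since $[\partial,L]=0$, Theorem \ref{DGK-identity} identifies $\partial^\Lambda$ with $[\partial,\Lambda]$ and gives $\partial=[\partial^\Lambda,L]$ and $\partial^*=[\Lambda,\partial^{\Lambda*}]$.

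With these identifications in place, Proposition \ref{Laplacians} applied with $d=\partial$ gives at once $\Delta_\partial=\Delta_{\partial^\Lambda}-[\Lambda,[\partial,\partial^{\Lambda*}]]$. If $[\partial,\partial^{\Lambda*}]=0$ the correction term vanishes, so $\Delta_\partial=\Delta_{\partial^\Lambda}$ as operators on $A^{\bullet,0}(M)$, and therefore $\mathcal{H}^{p,0}_\partial(M)=\ker(\Delta_\partial|_{A^{p,0}(M)})=\ker(\Delta_{\partial^\Lambda}|_{A^{p,0}(M)})=\mathcal{H}^{p,0}_{\partial^\Lambda}(M)$ for every $p$. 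Finally, when $[\partial,\partial^{\Lambda*}]=0$ the two hypotheses $[\partial,\partial^{\Lambda*}]=0$ and $[\partial,L]=0$ of Proposition \ref{Bott-Chern} are satisfied, and that proposition delivers the claimed chain of equalities for $\Delta^{\mathrm{BC}}_{\partial^\Lambda}$.

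The genuinely non-formal step — and the one I expect to require the most care — is precisely the verification that the HKT data fit the abstract framework: constructing $\mathcal{J}$ and checking its compatibility $\mathcal{J}L=L\mathcal{J}$ (pinning down exactly where q-reality of $\Omega$ is used, and, depending on conventions, whether the framework must be run with a conjugate-linear $\mathcal{J}$ or with a real form of $A^{\bullet,0}(M)$), together with confirming that the operators $\partial^\Lambda$, $\partial^{\Lambda*}$, $\partial^*$ produced by the Lefschetz-space construction coincide with those appearing in the statement and in the definitions of the cohomology groups. Once these identifications are secured, the proof is a mechanical transcription of Propositions \ref{Laplacians} and \ref{Bott-Chern}.
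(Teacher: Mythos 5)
Your proposal is correct and is exactly the paper's (implicit) argument: the paper states this proposition with no written proof beyond the remark that it follows from the general results of Section~\ref{Sec:Lef}, i.e.\ from Propositions \ref{Laplacians} and \ref{Bott-Chern} applied to the Lefschetz complex $(A^{\bullet,0}(M),L,\partial)$. Your extra care in checking that the q-reality of $\Omega$ is what yields $\mathcal{J}L=L\mathcal{J}$ (and hence a well-defined generalized Hodge star and adjoints $\partial^{*}$, $\partial^{\Lambda*}$) is welcome, since that ingredient is only made explicit by the paper in the subsequent subsection.
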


\subsection{Complex Hodge theory on HKT manifolds}

If we further assume that $ \Omega $ is q-positive, in the sense that $ J\Omega=\bar \Omega $ and $ \Omega(Z,J\bar Z)>0 $ for every $ Z\in T^{1,0}_IM $, $ Z\neq 0 $, then it must be the HKT form corresponding to a HKT metric $ g $ on $ (M,I,J,K) $. If $(M,I,J,K,g, \Omega)$ is HKT by \cite{verbitsky-hodge} we have
$$
[\partial^*,L]=-\partial_J+\theta_J\wedge -
$$
where $\theta_J$ is a $1$-form defined as follows. Since $\Omega$ is non-degenerate, there exists a $(0,1)$-form $\bar\theta$ such that
$$
\delbar\Omega^n=\bar\theta\wedge\Omega^n\,
$$
then, by definition, $\theta_J:=J\bar\theta\in A^{1,0}(M)$.
Notice that $(M,I,J,K,g, \Omega)$ is balanced if and only if $\theta_J=0$
and so for balanced HKT manifolds we have
$$
[\partial^*,L]=-\partial_J.
$$
Moreover recall that a balanced HKT manifold has holonomy in $\mathrm{SL}(n,\mathbb{H})$. Denoting with $\Lambda$ the adjoint of $L$, one can easily get by duality or by applying $J$ the following (see \cite{verbitsky-hodge})
\begin{prop}\label{prop:HKT_identities}
Let $(M,I,J,K,g, \Omega)$ be a compact balanced HKT manifold. Then,
the following identities hold
\begin{itemize}
\item $[\partial^*,L]=-\partial_J\,,$
\item $[\del,\Lambda]=\del_J^*\,,$
\item $[L,\del_J^*]=-\partial\,,$
\item $[\Lambda,\del_J]=\del^*\,.$
\end{itemize}
\end{prop}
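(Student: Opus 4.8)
The plan is to deduce all four identities from the single relation $[\partial^*,L]=-\partial_J$ (proved in \cite{verbitsky-hodge}), using the balanced hypothesis $\theta_J=0$, by exploiting the quaternionic structure and formal adjointness. The first bullet is already available, so the work lies in the remaining three.

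First I would obtain the fourth identity $[\Lambda,\partial_J]=\partial^*$ by taking formal adjoints in the first. Since $L^*=\Lambda$ (with respect to the hyperhermitian metric on $(p,0)$-forms) and $(\partial^*)^*=\partial$, adjoining $[\partial^*,L]=-\partial_J$ gives $[\Lambda,\partial]=-\partial_J^*$ — wait, I must be careful with signs and with which operator is being adjoined; the cleaner route is to adjoin $[\partial^*,L]=-\partial_J$ directly to get $[L^*,(\partial^*)^*]$, i.e. a bracket of $\Lambda$ and $\partial$, equal to $-\partial_J^*$ up to sign, which yields the \emph{second} bullet $[\partial,\Lambda]=\partial_J^*$. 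Then the fourth bullet $[\Lambda,\partial_J]=\partial^*$ follows by applying the operator $J$ (conjugation by $J$ sends $\partial\mapsto\partial_J$, $\partial_J\mapsto-\partial$ since $J\bar\partial J^{-1}=\partial_J$ and $J\partial_J J^{-1}=J(J^{-1}\bar\partial J)J^{-1}=\bar\partial$... here one uses $J^2=-1$ on forms appropriately), and noting $J$ commutes with $L$ and $\Lambda$ because $\Omega$ is q-real ($J\Omega=\bar\Omega$) — this is exactly the kind of "apply $J$" argument the statement alludes to. The third bullet $[L,\partial_J^*]=-\partial$ is then the adjoint of the second, or equivalently the $J$-conjugate of the first.

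So concretely the four identities sit at the vertices of a square: the top edge (first bullet) is given; the left edge (taking $\ast$-adjoints) produces the second from the first and the third from the fourth; the bottom edge ($J$-conjugation together with $J L = L J$, $J\Lambda = \Lambda J$) produces the fourth from the first and the third from the second. Any two of the three operations (given identity, adjoint, $J$-conjugate) generate all four, so it suffices to carefully execute adjunction once and $J$-conjugation once, tracking the sign conventions in $\partial_J:=J^{-1}\bar\partial J$ and in the graded bracket.

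The main obstacle I expect is purely bookkeeping: getting the signs right in the adjunction step (the bracket $[\partial^*,L]=\partial^*L-L\partial^*$ adjoints to $L^*(\partial^*)^* - (\partial^*)^*L^* = \Lambda\partial - \partial\Lambda = -[\partial,\Lambda]$, so indeed $[\partial,\Lambda]=-(-\partial_J)^*=\partial_J^*$, using that $\partial_J$ is real-linear and $(\partial_J)^* $ is its formal adjoint) and in verifying $J\partial_J J^{-1}=\bar\partial$ so that $J$-conjugation swaps the roles of $\partial$ and $\partial_J$ with the correct sign. Once the conventions are pinned down, each identity is a one-line consequence, which is why the statement says "one can easily get."
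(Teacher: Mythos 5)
Your proposal matches the paper's own argument: the paper likewise obtains all four identities from Verbitsky's relation $[\partial^*,L]=-\partial_J+\theta_J\wedge-$ (with $\theta_J=0$ in the balanced case) ``by duality or by applying $J$'', which is exactly your adjoint/$J$-conjugation square. The only caveat is that the conjugation must be carried out with the anti-linear map $\alpha\mapsto J\bar\alpha$ (which preserves $A^{\bullet,0}(M)$ and commutes with $L$ because $\Omega$ is q-real), not with $J$ alone, and the precise pairing of which identity is the conjugate of which depends on that convention --- but you flag this bookkeeping issue yourself and your adjunction computation is correct.
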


Now, we shall show that the framework of the previous section can be used to study quaternionic cohomologies. First of all, we set $ \mathcal{J} \alpha=J\bar\alpha $ for every $ \alpha \in A^{1,0}(M) $, thus $ \mathcal{J} $ is a complex structure on $ A^{1,0}(M) $ and naturally extends to $ A^{p,0}(M) $ by imposing compatibility with the wedge product. Since $ \Omega $ is q-real we have $ \mathcal{J}L= L\mathcal{J} $ and we can use \eqref{Hod} to define a Hodge-type operator.

We warn the reader that in this framework the operator defined by \eqref{Hod} slightly differs from the usual Hodge operator. To distinguish them, let us denote here $*:A^{p,0}(M)\to A^{2n-p,0}(M) $ the operator defined in \eqref{Hod} and $ \hat *:A^{p,q}(M)\to A^{2n-p,2n-q}(M) $ the usual Hodge star operator, then one can easily show that for every $ \alpha,\beta \in A^{p,0}(M) $
\[
\alpha \wedge *\beta=g(\alpha,\beta) \frac{\Omega^n}{n!}\,,
\]
where $ g $ here is the Hermitian product induced by the Riemannian metric on $ A^{p,0}(M) $, while, by definition,
\[
\alpha \wedge \hat * \beta=g(\alpha,\beta) \frac{\Omega^n\wedge \bar \Omega^n}{(n!)^2}\,.
\]
However, we can identify the formal adjoints of $ \partial $ and $ \partial_J $ with respect to $ * $ and $ \hat * $ in the following way.

Suppose $ M $ is a $ \mathrm{SL}(n,\mathbb{H}) $-manifold and fix a q-positive q-real holomorphic $ (2n,0) $-form  $ \Theta $. Define the following $ L^2 $-products:
	\[
	(\alpha,\beta)_1:=\int_M g(\alpha,\beta)\frac{\Omega^n\wedge \bar \Omega^n}{(n!)^2}=\int_M\alpha \wedge \hat * \beta\,, \qquad (\alpha,\beta)_2:=\int_M g(\alpha,\beta)\frac{\Omega^n}{n!}\wedge \bar\Theta=\int_M\alpha \wedge *\beta \wedge \bar \Theta\,.
	\]
Then the adjoint of $ \partial $ and $ \partial_J $ with respect to $ (\cdot,\cdot)_1 $ are $ \partial^{\hat *}=-\hat * \partial \hat *$ and $ \partial_J^{\hat *}=-\hat *  \partial_J \hat * $, while those with respect to $ (\cdot,\cdot)_2 $ are $\partial^*=-*\partial * $ and $ \partial_J^*=-*\partial_J * $ (cf. \cite{lejmi-weber}). 
Since $ \Theta $ is q-positive, there exists a real-valued function $ f>0 $ such that $ \Theta=f\frac{\Omega^n}{n!} $, moreover, the holomorphicity of $ \Theta $ translates into the condition $ \partial f+ f\theta=0 $. Now, observe that $ (\cdot,\cdot)_2=(f\cdot,\cdot)_1 $ thus
	\[
	(\alpha,\partial^*\beta)_2=(\partial \alpha,\beta)_2=(f\partial \alpha,\beta)_1=(\partial(f\alpha)-\partial f \wedge \alpha,\beta)_1=(\alpha,\partial^{\hat *}\beta)_2+(\theta \wedge \alpha, \beta)_2
	\]
and similarly, working with $ \partial_J^* $ and $ \partial_J^{\hat *} $ one obtains
	\[
	(\alpha,\partial_J^*\beta)_2=(\alpha,\partial^{\hat *}_J\beta)_2-(\theta_J \wedge \alpha, \beta)_2\,.
	\]
In particular if $ M $ is balanced then $ \theta=\theta_J=0 $ and $ f $ is constant, so that the two $ L^2 $-products coincide up to a constant and $ \partial^*=\partial^{\hat{*}} $ and $ \partial^*_J=\partial^{\hat{*}}_J $. In particular the usual Laplacians obtained by means of the Riemannian Hodge star operator coincide with those Laplacians considered in Section 2 and the related results can be applied.

In particular, if $M$ is compact and if $\alpha\in A^{p,0}(M)$ one immediately obtains
\begin{equation*}
\begin{cases}
\alpha\in\mathcal{H}^{p,0}_{\partial}(M)\ &\iff\  \partial\alpha=0,\quad\partial^*\alpha=0;\\
\alpha\in\mathcal{H}^{p,0}_{\partial_J}(M)\ &\iff\ \partial_J\alpha=0,\quad\partial_J^*\alpha=0;\\
\alpha\in\mathcal{H}^{p,0}_{BC}(M)  \ &\iff\ \partial\alpha=0,\quad\partial_J\alpha=0,\quad \partial_J^*\del^*\alpha=0;\\
\alpha\in\mathcal{H}^{p,0}_{A}(M)\ &\iff \ \partial^*\alpha=0,\quad\partial_J^*\alpha=0,\quad \partial\del_J\alpha=0.
\end{cases}
\end{equation*}

Proposition \ref{prop:HKT_identities} shows that $ \partial^\Lambda=\partial_J^* $ and we readily obtain from Proposition \ref{Laplacians}.

\begin{prop}\label{prop:equality-Laplacians}
Let $(M,I,J,K,\Omega)$ be a compact balanced HKT manifold, then
$$
\Delta_{\partial_J}=\Delta_{\partial}\,.
$$
In particular, the spaces of harmonic forms coincide, namely for every $p$ we have
$$
\mathcal{H}^{p,0}_{\partial_J}(M)=\mathcal{H}^{p,0}_{\partial}(M)\,.
$$
\end{prop}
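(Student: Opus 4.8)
The plan is to deduce this directly from Proposition \ref{Laplacians} once the hypotheses are matched up. Recall that in the general Lefschetz framework we already know $\Delta_d = \Delta_{d^\Lambda} - [\Lambda,[d,d^{\Lambda*}]]$, so the entire statement follows if we can identify $d = \partial$, recognize that $\partial^\Lambda = \partial_J^*$ (hence $\Delta_{\partial^\Lambda} = \Delta_{\partial_J^*}$, which we must then see is $\Delta_{\partial_J}$), and finally verify the vanishing condition $[\partial,\partial^{\Lambda*}] = 0$ in the balanced HKT setting.

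First I would invoke Proposition \ref{prop:HKT_identities}: on a compact balanced HKT manifold $[\partial,\Lambda] = \partial_J^*$, which is exactly the statement $\partial^\Lambda = \partial_J^*$. Taking Hodge adjoints with respect to the product $(\cdot,\cdot)_2$ (legitimate here because in the balanced case $f$ is constant and the two $L^2$-products agree up to scale, as established in the preceding discussion), we get $\partial^{\Lambda*} = (\partial_J^*)^* = \partial_J$. Consequently $[\partial,\partial^{\Lambda*}] = [\partial,\partial_J] = \partial\partial_J + \partial_J\partial = 0$, since on a hypercomplex manifold $\partial$ and $\partial_J$ anticommute (this is recalled in the introduction, coming from integrability of $I,J,K$). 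So the correction term $[\Lambda,[\partial,\partial^{\Lambda*}]]$ vanishes identically.

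Then I would apply Proposition \ref{Laplacians} with $d = \partial$: the hypothesis $[\partial,\partial^{\Lambda*}] = 0$ is satisfied, so $\Delta_\partial = \Delta_{\partial^\Lambda}$ and the kernels coincide, $\mathcal{H}^{p,0}_\partial(M) = \mathcal{H}^{p,0}_{\partial^\Lambda}(M)$. It remains to observe that $\Delta_{\partial^\Lambda} = \Delta_{\partial_J}$: since $\partial^\Lambda = \partial_J^*$ and $\partial^{\Lambda*} = \partial_J$, we have $\Delta_{\partial^\Lambda} = \partial^\Lambda\partial^{\Lambda*} + \partial^{\Lambda*}\partial^\Lambda = \partial_J^*\partial_J + \partial_J\partial_J^* = \Delta_{\partial_J}$. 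Chaining these equalities gives $\Delta_{\partial_J} = \Delta_\partial$ and hence $\mathcal{H}^{p,0}_{\partial_J}(M) = \mathcal{H}^{p,0}_\partial(M)$ for every $p$.

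The only real subtlety — and the step I would be most careful about — is the bookkeeping around the two different star operators: one must make sure that the formal adjoints $\partial^*, \partial_J^*$ appearing in Proposition \ref{prop:HKT_identities} and in the Laplacians of Section \ref{Sec:Lef} are genuinely the same operator. In the balanced case this is exactly what the paragraph preceding the proposition guarantees, namely $\partial^* = \partial^{\hat *}$ and $\partial_J^* = \partial_J^{\hat *}$ because $\Theta$ is a constant multiple of $\Omega^n/n!$; without balancedness the products $(\cdot,\cdot)_1$ and $(\cdot,\cdot)_2$ differ by the factor $f$ and the identity would pick up the torsion terms $\theta,\theta_J$, which is precisely why the hypothesis is needed. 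Everything else is a short formal manipulation with graded brackets.
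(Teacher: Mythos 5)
Your proof is correct and follows essentially the same route as the paper, which simply notes that Proposition \ref{prop:HKT_identities} gives $\partial^\Lambda=\partial_J^*$ and then invokes Proposition \ref{Laplacians}; you have merely spelled out the details the paper leaves implicit (the vanishing of $[\partial,\partial^{\Lambda*}]$ via the anticommutativity of $\partial$ and $\partial_J$, and the identification of the adjoints with respect to $(\cdot,\cdot)_1$ and $(\cdot,\cdot)_2$ in the balanced case). The only point worth flagging is a sign convention: with the paper's formal definition $\partial^{\Lambda*}=-*\partial^\Lambda*$ one finds $\partial^{\Lambda*}=-\partial_J$ (as the paper's own subsequent remark records via $[\partial,\partial^{\Lambda*}]=-[\partial,\partial_J]$), whereas your reading of $\partial^{\Lambda*}$ as the genuine $L^2$-adjoint gives $+\partial_J$; this discrepancy affects neither the vanishing of the graded bracket nor the conclusion $\Delta_{\partial}=\Delta_{\partial_J}$, and your interpretation is the one that makes the chain of equalities of nonnegative Laplacians consistent.
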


\begin{rem}
If we do not assume the compact HKT manifold $(M,I,J,K,\Omega)$ to be balanced we would have, in general
$$
[\partial^*,L]=-\partial_J+\theta_J\wedge-\,.
$$
Setting $\tau(\alpha):=\theta_J\wedge\alpha$ and $\psi(\alpha):=\theta\wedge\alpha$, then, we would get
$$
\partial^\Lambda=[\partial,\Lambda]=\partial_J^*-\tau^*\,.
$$
In particular $ [\partial,\partial^{\Lambda*}]=-\partial \theta_J \wedge - $ and in such a case the Laplacians $ \Delta_\partial $, $ \Delta_{\partial^\Lambda} $ and $\Delta_{\partial_J}$ do not coincide, and in fact by a direct computation one gets
$$
\Delta_{\partial_J}=\Delta_{\partial}+[\psi^*,\del]+[\del_J,\tau^*].
$$
Notice that $\psi^*=\iota_{\theta^\sharp}$ and $\tau^*=\iota_{\theta_J^\sharp}$.

We also observe that the condition $ [\partial,\partial^{\Lambda*}]=0 $, i.e. $ \partial \theta_J=0 $ is only satisfied when the manifold is balanced, indeed $ \partial \theta_J=0 $ is equivalent to $ \partial_J \theta=0 $, from which we obtain
$$
\begin{aligned}
\partial_J \partial \bar \Omega^n=&\,\partial_J(\theta \wedge \bar \Omega^n)=\partial_J \theta \wedge \bar \Omega^n-\theta \wedge \partial_J \bar \Omega^n=-\theta \wedge J^{-1} \bar \partial \Omega^n\\
=&\,-\theta \wedge J^{-1} (\bar \theta \wedge \Omega^n)=\theta \wedge J\bar \theta \wedge \bar \Omega^n\,.
\end{aligned}
$$
Therefore by integrating we infer
\[
0=\int_M \partial_J \bar \Omega^n \wedge \partial \Omega^{n-1}=\int_M \partial_J \partial \bar \Omega^n \wedge \Omega^{n-1}=\int_M \theta \wedge J\bar \theta \wedge \bar \Omega^n \wedge \Omega^{n-1}=\frac{1}{n}\int_M \|\theta\|^2\, \Omega^n \wedge \bar \Omega^n
\]
and the claim follows. 
\end{rem}

\begin{rem}
We can also reinterpret Proposition \ref{prop:equality-Laplacians} as follows.
We first notice that if $(M,I,J,K,\Omega)$ is a compact balanced HKT manifold. Then,
$[\del,\del^{\Lambda*}]=-[\del,\del_J]=0$.\\
Hence, by Propositions \ref{Laplacians} and \ref{Bott-Chern} we obtain
that
\[
\Delta_{\del}=\Delta_{\del^\Lambda}
\]
and for every $p$ we have
$$
\mathcal{H}^{p}_{\del}(M)=\mathcal{H}^{p}_{\del^\Lambda}(M)\,.
$$
Moreover, 
$$
\Delta^{\mathrm{BC}}_{\del^\Lambda}= \Delta_{\del^\Lambda}\Delta_{\del^\Lambda}+\del^*\del+\del^{\Lambda*}\del^\Lambda\,=
\Delta^{\mathrm{BC}}_{\del^{\Lambda*}}+\del^{\Lambda*}\del^\Lambda-\del^\Lambda \del^{\Lambda*}.
$$
\end{rem}

As a consequence of Proposition \ref{prop:equality-Laplacians} we obtain isomorphisms for the associated cohomology groups (cf. \cite[Proposition 2.3]{lejmi-weber} where it is noticed that an isomorphism, induced by $J$ and conjugation with respect to $I$, holds in general for hypercomplex manifolds).

\begin{cor}
Let $(M,I,J,K,\Omega)$ be a compact HKT balanced manifold, then
$$
H^{p,0}_{\partial_J}(M)\simeq H^{p,0}_{\partial}(M).
$$
In particular, we have the equalities
$$
h^{p,0}_{\partial_J}(M)=h^{p,0}_{\partial}(M).
$$
\end{cor}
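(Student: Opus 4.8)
The plan is to obtain this corollary as an immediate consequence of Proposition~\ref{prop:equality-Laplacians}, combined with the Hodge-theoretic description of the quaternionic Dolbeault cohomology recalled in the introduction. First I would fix the metric $g$ underlying the balanced HKT structure and note that, as observed in the discussion preceding this corollary, the balanced condition gives $\theta=\theta_J=0$, so that the formal adjoints $\partial^*,\partial_J^*$ with respect to $(\cdot,\cdot)_2$ agree (up to an irrelevant constant) with the Riemannian ones $\partial^{\hat*},\partial_J^{\hat*}$; hence $\Delta_\partial$ and $\Delta_{\partial_J}$ are the usual elliptic Laplacians and the standard elliptic Hodge decomposition applies on the compact manifold $M$.

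Consequently, as recalled in the introduction (cf.\ \cite{grantcharov-lejmi-verbitsky}), one has isomorphisms
\[
H^{p,0}_\partial(M)\cong \mathcal{H}^{p,0}_\partial(M)\,,\qquad H^{p,0}_{\partial_J}(M)\cong \mathcal{H}^{p,0}_{\partial_J}(M)
\]
for every $p$. Next I would invoke Proposition~\ref{prop:equality-Laplacians}: the balanced HKT hypothesis forces $\Delta_{\partial_J}=\Delta_\partial$ as operators on $A^{p,0}(M)$, whence their kernels coincide, i.e.\ $\mathcal{H}^{p,0}_{\partial_J}(M)=\mathcal{H}^{p,0}_\partial(M)$ as subspaces of $A^{p,0}(M)$. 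Chaining these facts yields
\[
H^{p,0}_{\partial_J}(M)\cong \mathcal{H}^{p,0}_{\partial_J}(M)=\mathcal{H}^{p,0}_\partial(M)\cong H^{p,0}_\partial(M)\,,
\]
and taking complex dimensions gives $h^{p,0}_{\partial_J}(M)=h^{p,0}_\partial(M)$.

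Since every step is either a citation of standard elliptic Hodge theory or a direct application of Proposition~\ref{prop:equality-Laplacians}, there is essentially no real obstacle here; the only point demanding a moment of care is that the harmonic spaces in Proposition~\ref{prop:equality-Laplacians} must be computed with respect to a metric for which the elliptic Hodge decomposition holds, which is exactly what the balanced condition guarantees via the identification $\partial^*=\partial^{\hat*}$, $\partial_J^*=\partial_J^{\hat*}$. I would also remark (as the parenthetical in the statement already does) that an isomorphism $H^{p,0}_{\partial_J}(M)\cong H^{p,0}_\partial(M)$ holds on \emph{any} compact hypercomplex manifold via the map induced by $J$ and conjugation with respect to $I$ \cite[Proposition~2.3]{lejmi-weber}; the point of the present argument is that in the balanced HKT case this isomorphism is moreover realized at the level of harmonic representatives, consistently with Theorem~\ref{Teor:harmonic}.
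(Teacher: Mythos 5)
Your proposal is correct and follows essentially the same route as the paper: the corollary is stated there as an immediate consequence of Proposition \ref{prop:equality-Laplacians}, with the identification of each cohomology group with the kernel of its (elliptic) Laplacian supplied by the Hodge theory recalled in the introduction, exactly as you argue. Your extra care about $\partial^*=\partial^{\hat*}$, $\partial_J^*=\partial_J^{\hat*}$ in the balanced case matches the discussion the paper gives just before Proposition \ref{prop:equality-Laplacians}, and your closing remark mirrors the paper's own parenthetical citation of \cite{lejmi-weber}.
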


Invoking Proposition \ref{Bott-Chern} we obtain that, similarly to the K\"ahler case, the Laplacians $\Delta_{\mathrm{BC}}$ and $\Delta_{\partial_J}=\Delta_{\partial}$ are related.
\begin{prop}\label{prop:equality-laplacian-BC}
Let $(M,I,J,K,\Omega)$ be a compact HKT balanced manifold, then
\begin{align*}
\Delta_{\mathrm{BC}}&=\Delta_{\partial_J}\Delta_{\partial_J}+\partial^*\partial+\partial_J^*\partial_J\\
&= \Delta_{\partial}\Delta_{\partial}+\partial^*\partial+\partial_J^*\partial_J\,.
\end{align*}
In particular, the spaces of harmonic forms coincide, namely for every $p$ we have
$$
\mathcal{H}^{p,0}_{\mathrm{BC}}(M)=\mathcal{H}^{p,0}_{\partial_J}(M).
$$
\end{prop}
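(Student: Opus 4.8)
The plan is to expand the quaternionic Bott--Chern Laplacian $\Delta_{\mathrm{BC}}$ exactly as it is written in the Introduction, simplify the resulting quartic terms using the commutation relations available on a compact balanced HKT manifold, and then read off the statement from Proposition \ref{prop:equality-Laplacians}. The first task is to record the identities I will need. On any hypercomplex manifold $\partial$ and $\partial_J$ anticommute, hence so do their formal adjoints, giving $[\partial,\partial_J]=0$ and $[\partial^*,\partial_J^*]=0$. The ``K\"ahler-type'' relations $[\partial,\partial_J^*]=0$ and $[\partial^*,\partial_J]=0$ are where the balanced hypothesis enters: by Proposition \ref{prop:HKT_identities} one has $\partial_J^*=[\partial,\Lambda]$, so $[\partial,\partial_J^*]=[\partial,[\partial,\Lambda]]$, which vanishes by the graded Jacobi identity together with $\partial^2=0$; the relation $[\partial^*,\partial_J]=0$ then follows by taking adjoints. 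Here I use that on a compact balanced HKT manifold the metric $L^2$-adjoints coincide with $-*\partial*$ and $-*\partial_J*$, as explained before Proposition \ref{prop:equality-Laplacians}, so that Proposition \ref{prop:HKT_identities} applies.

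Next I would substitute these relations into
\[
\Delta_{\mathrm{BC}}=\partial^* \partial +\partial^*_J\partial_J+\partial \partial_J \partial_J^* \partial^*+\partial^*_J\partial^*\partial \partial_J+\partial^*_J \partial \partial^* \partial_J+\partial^*\partial_J \partial_J^* \partial\,,
\]
leaving the two quadratic terms unchanged and pushing all occurrences of $\partial_J$ and $\partial_J^*$ in each quartic term to the outside by means of the anticommutation rules above. A short reordering shows that the first and last quartic terms combine to $\partial_J\partial_J^*(\partial\partial^*+\partial^*\partial)=\partial_J\partial_J^*\Delta_\partial$ and the two middle ones combine to $(\partial^*\partial+\partial\partial^*)\partial_J^*\partial_J=\Delta_\partial\partial_J^*\partial_J$, so that
\[
\Delta_{\mathrm{BC}}=\partial^*\partial+\partial_J^*\partial_J+\partial_J\partial_J^*\Delta_\partial+\Delta_\partial\partial_J^*\partial_J\,.
\]
Now I invoke Proposition \ref{prop:equality-Laplacians}: one has $\Delta_\partial=\Delta_{\partial_J}$, and $\Delta_{\partial_J}$ commutes with $\partial_J$ and with $\partial_J^*$, hence with $\partial_J\partial_J^*$; moving it across gives $\partial_J\partial_J^*\Delta_\partial=\Delta_\partial\partial_J\partial_J^*$, whence $\Delta_{\mathrm{BC}}=\partial^*\partial+\partial_J^*\partial_J+\Delta_\partial(\partial_J\partial_J^*+\partial_J^*\partial_J)=\partial^*\partial+\partial_J^*\partial_J+\Delta_\partial\Delta_{\partial_J}$, which is the asserted identity (the second form follows by replacing $\Delta_{\partial_J}$ by $\Delta_\partial$).

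The statement about harmonic forms then follows from positivity: $\Delta_{\mathrm{BC}}$, $\Delta_\partial\Delta_\partial$, $\partial^*\partial$ and $\partial_J^*\partial_J$ are all non-negative self-adjoint operators, so for $\alpha\in A^{p,0}(M)$ the condition $\Delta_{\mathrm{BC}}\alpha=0$ is equivalent to $\|\Delta_\partial\alpha\|^2+\|\partial\alpha\|^2+\|\partial_J\alpha\|^2=0$, i.e. to $\Delta_\partial\alpha=0$; hence $\mathcal{H}^{p,0}_{\mathrm{BC}}(M)=\mathcal{H}^{p,0}_\partial(M)=\mathcal{H}^{p,0}_{\partial_J}(M)$. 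I expect the only genuinely delicate point to be the sign bookkeeping in the reordering of the quartic terms, together with keeping the adjoints in the first paragraph consistent. An alternative that avoids the hand computation is to apply Proposition \ref{Bott-Chern} with $d=\partial$ --- its hypotheses $[\partial,L]=0$ and $[\partial,\partial^{\Lambda*}]=0$ hold here, since $\partial\Omega=0$ and $\partial^{\Lambda*}=\partial_J$ anticommutes with $\partial$ --- and then to check that, under the dictionary $\partial^\Lambda=\partial_J^*$, $\partial^{\Lambda*}=\partial_J$, $\Delta_{\partial^\Lambda}=\Delta_{\partial_J}$, the operator $\Delta^{\mathrm{BC}}_{\partial^{\Lambda*}}$ of Section \ref{Sec:Lef} matches $\Delta_{\mathrm{BC}}$ of the Introduction term by term; the identity then drops out of Proposition \ref{Bott-Chern}.
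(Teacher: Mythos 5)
Your argument is correct and is essentially the paper's: the paper proves this proposition simply by invoking Proposition \ref{Bott-Chern} under the dictionary $\partial^\Lambda=\partial_J^*$, $\partial^{\Lambda*}=\partial_J$, $\Delta_{\partial^\Lambda}=\Delta_{\partial_J}$ (exactly your "alternative" route), and your direct reordering of the quartic terms is just the proof of Proposition \ref{Bott-Chern} specialised to this setting. The commutation identities you derive and the positivity argument for the equality of harmonic spaces all check out.
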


Consequently we obtain isomorphisms for the associated cohomology groups.
\begin{cor}
Let $(M,I,J,K,\Omega)$ be a compact balanced HKT manifold, then for every $p$,
$$
H^{p,0}_{\mathrm{BC}}(M)\simeq H^{p,0}_{\partial_J}(M)\simeq H^{p,0}_{\partial}(M).
$$
In particular, we have the equalities
$$
h^{p,0}_{\mathrm{BC}}(M)=h^{p,0}_{\partial_J}(M)=h^{p,0}_{\partial}(M).
$$
\end{cor}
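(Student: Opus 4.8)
The plan is to obtain the corollary as a direct consequence of Propositions \ref{prop:equality-Laplacians} and \ref{prop:equality-laplacian-BC} together with the Hodge-theoretic identification of each quaternionic cohomology group with the kernel of the corresponding Laplacian. Recall from \cite{grantcharov-lejmi-verbitsky} that on a compact HKT manifold, once a hyperhermitian metric is fixed, each of these cohomology groups is finite dimensional and one has canonical isomorphisms $H^{p,0}_\partial(M)\cong\mathcal{H}^{p,0}_\partial(M)$, $H^{p,0}_{\partial_J}(M)\cong\mathcal{H}^{p,0}_{\partial_J}(M)$ and $H^{p,0}_{\mathrm{BC}}(M)\cong\mathcal{H}^{p,0}_{\mathrm{BC}}(M)$ for every $p$. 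The comparison $H^{p,0}_{\partial_J}(M)\cong H^{p,0}_\partial(M)$ is already recorded as a corollary of Proposition \ref{prop:equality-Laplacians}, so the genuinely new point is to bring the Bott--Chern group into the picture.

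First I would invoke Proposition \ref{prop:equality-laplacian-BC}, which on a compact balanced HKT manifold gives $\mathcal{H}^{p,0}_{\mathrm{BC}}(M)=\mathcal{H}^{p,0}_{\partial_J}(M)$, together with Proposition \ref{prop:equality-Laplacians}, which gives $\mathcal{H}^{p,0}_{\partial_J}(M)=\mathcal{H}^{p,0}_\partial(M)$. Chaining these equalities with the isomorphisms above yields
\[
H^{p,0}_{\mathrm{BC}}(M)\cong\mathcal{H}^{p,0}_{\mathrm{BC}}(M)=\mathcal{H}^{p,0}_{\partial_J}(M)\cong H^{p,0}_{\partial_J}(M)\cong H^{p,0}_\partial(M)
\]
for every $p$, and taking dimensions gives the stated equalities $h^{p,0}_{\mathrm{BC}}(M)=h^{p,0}_{\partial_J}(M)=h^{p,0}_\partial(M)$. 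If one wants the isomorphism $H^{p,0}_{\mathrm{BC}}(M)\to H^{p,0}_\partial(M)$ to be the natural one $[\alpha]_{\mathrm{BC}}\mapsto[\alpha]_\partial$, a short check suffices: it is well defined since $\partial\partial_J$-exact forms are $\partial$-exact; it is surjective since the $\partial$-harmonic representative of any class is Bott--Chern harmonic by Proposition \ref{prop:equality-laplacian-BC}, hence Bott--Chern closed; and it is injective since a Bott--Chern harmonic, $\partial$-exact form is $\partial$-harmonic (again by that proposition), hence zero, being $L^2$-orthogonal to $\mathrm{Im}\,\partial$. The same argument applies verbatim to $H^{p,0}_{\partial_J}(M)\to H^{p,0}_\partial(M)$.

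Since all the analytic input has been set up in the preceding subsections, the argument is really an assembly and I do not expect a substantial obstacle. The only point deserving a word of care is the availability of Hodge theory for the fourth-order operator $\Delta_{\mathrm{BC}}$ on a compact HKT manifold --- that is, the existence of a harmonic representative in each Bott--Chern class and the finiteness of $h^{p,0}_{\mathrm{BC}}$ --- which is precisely what is guaranteed by \cite{grantcharov-lejmi-verbitsky} and which underlies the isomorphism $H^{p,0}_{\mathrm{BC}}(M)\cong\mathcal{H}^{p,0}_{\mathrm{BC}}(M)$ used above.
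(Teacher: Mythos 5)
Your proposal is correct and follows exactly the route the paper intends: the corollary is stated as an immediate consequence of Propositions \ref{prop:equality-Laplacians} and \ref{prop:equality-laplacian-BC} combined with the Hodge-theoretic isomorphisms $H^{p,0}_{\sharp}(M)\cong\mathcal{H}^{p,0}_{\sharp}(M)$ from \cite{grantcharov-lejmi-verbitsky}, with no further argument given. Your additional verification that the natural maps $[\alpha]_{\mathrm{BC}}\mapsto[\alpha]_{\partial}$ realize these isomorphisms is a harmless (and correct) refinement beyond what the paper records.
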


Notice that these results are the analogue of the ones proved in \cite{schweizer} for compact K\"ahler manifolds.\\

As a consequence of the previous results we prove that, under the same hypothesis, the Hard Lefschetz condition holds for the cohomologies $H^{\bullet,0}_{\partial}(M)$, $H^{\bullet,0}_{\partial_J}(M)$, $H^{\bullet,0}_{\mathrm{BC}}(M)$, thus generalizing \cite[Proposition 4.7]{barberis-dotti-verbitsky}.
\begin{theorem}\label{thm:hlc}
Let $(M,I,J,K,\Omega)$ be a compact $4n$-dimensional balanced HKT manifold, then for every $i$,
$$
L^{n-i}:\mathcal{H}^{i,0}_{\partial}(M)\to \mathcal{H}^{2n-i,0}_{\partial}(M),
$$
$$
L^{n-i}:\mathcal{H}^{i,0}_{\partial_J}(M)\to \mathcal{H}^{2n-i,0}_{\partial_J}(M)\,,
$$
$$
L^{n-i}:\mathcal{H}^{i,0}_{\mathrm{BC}}(M)\to \mathcal{H}^{2n-i,0}_{\mathrm{BC}}(M)\,\quad
$$
are isomorphisms.\\
In particular,
$$
h^{i,0}_{\mathrm{BC}}=h^{i,0}_{\partial}=h^{i,0}_{\partial_J}=
h^{2n-i,0}_{BC}=h^{2n-i,0}_{\partial}=h^{2n-i,0}_{\partial_J}.
$$
\end{theorem}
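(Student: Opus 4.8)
The plan is to leverage the identification of each cohomology group with the corresponding space of harmonic forms together with the equalities of Laplacians already established in Propositions \ref{prop:equality-Laplacians}, \ref{prop:equality-laplacian-BC} and the abstract Lefschetz machinery of Section \ref{Sec:Lef}. Concretely, by Proposition \ref{prop:equality-Laplacians} we have $\Delta_\partial=\Delta_{\partial_J}$, so $\mathcal{H}^{i,0}_\partial(M)=\mathcal{H}^{i,0}_{\partial_J}(M)$, and the isomorphism $L^{n-i}\colon\mathcal{H}^{i,0}_\partial(M)\to\mathcal{H}^{2n-i,0}_\partial(M)$ will follow at once from a Hard Lefschetz statement for one of them. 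So the core of the proof is a single HLC assertion; the other two isomorphisms are then formal consequences via Propositions \ref{prop:equality-Laplacians} and \ref{prop:equality-laplacian-BC}.

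First I would invoke Theorem \ref{thm: deldellambda-lemma}: on a compact balanced HKT manifold $[\partial,L]=0$, so $(A^{\bullet,0}(M),L,\partial)$ is a Lefschetz complex with $[\partial,L]=0$, and hence $(\mathcal{H}^{\bullet,0}_L(M),L)$ satisfies the HLC. The point to pin down is that, on a \emph{balanced} HKT manifold, the space $\mathcal{H}^{\bullet,0}_L(M)$ of Lefschetz-harmonic forms (those killed by $\partial$ and $\partial^\Lambda$) coincides with $\mathcal{H}^{\bullet,0}_\partial(M)$. This is exactly where balancedness enters: by Proposition \ref{prop:HKT_identities} we have $\partial^\Lambda=\partial_J^*$, so $\mathcal{H}^{p,0}_L(M)=\{\alpha\colon\partial\alpha=0,\ \partial_J^*\alpha=0\}$; on the other hand $\mathcal{H}^{p,0}_\partial(M)=\{\alpha\colon\partial\alpha=0,\ \partial^*\alpha=0\}$, and these agree because $\Delta_\partial=\Delta_{\partial_J}$ (Proposition \ref{prop:equality-Laplacians}) forces $\ker\Delta_\partial=\ker\Delta_{\partial_J}$, while $\partial\alpha=0$ together with $\Delta_{\partial_J}\alpha=0$ gives $\partial_J\alpha=0=\partial_J^*\alpha$ by the usual harmonic argument, and symmetrically. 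Thus $\mathcal{H}^{i,0}_\partial(M)=\mathcal{H}^{i,0}_L(M)=\mathcal{H}^{i,0}_{\partial_J}(M)$ and the first two claimed isomorphisms follow directly from the HLC in Theorem \ref{thm: deldellambda-lemma}.

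For the Bott–Chern case I would argue as follows. By Proposition \ref{prop:equality-laplacian-BC}, $\Delta_{\mathrm{BC}}=\Delta_\partial\Delta_\partial+\partial^*\partial+\partial_J^*\partial_J$; since all three summands are nonnegative self-adjoint operators, $\Delta_{\mathrm{BC}}\alpha=0$ is equivalent to $\Delta_\partial\alpha=0$ together with $\partial\alpha=0=\partial_J\alpha$, which by the previous paragraph is just $\alpha\in\mathcal{H}^{i,0}_\partial(M)$ — this is precisely the last assertion of Proposition \ref{prop:equality-laplacian-BC}, $\mathcal{H}^{i,0}_{\mathrm{BC}}(M)=\mathcal{H}^{i,0}_{\partial_J}(M)=\mathcal{H}^{i,0}_\partial(M)$. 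Since $L$ commutes with $\partial$, $\partial_J$ and their adjoints on a balanced HKT manifold (the four identities of Proposition \ref{prop:HKT_identities} give $[L,\partial]=[L,\partial_J]=0$ and, taking adjoints, also the brackets with $\partial^*,\partial_J^*$ reduce appropriately), $L$ preserves $\mathcal{H}^{\bullet,0}_{\mathrm{BC}}(M)$, and the isomorphism $L^{n-i}\colon\mathcal{H}^{i,0}_{\mathrm{BC}}(M)\to\mathcal{H}^{2n-i,0}_{\mathrm{BC}}(M)$ is identified with the already-established one on $\mathcal{H}^{i,0}_\partial(M)$. The dimension equalities $h^{i,0}_{\mathrm{BC}}=h^{i,0}_\partial=h^{i,0}_{\partial_J}=h^{2n-i,0}_{\mathrm{BC}}=h^{2n-i,0}_\partial=h^{2n-i,0}_{\partial_J}$ are then immediate.

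The main obstacle I anticipate is verifying cleanly that $L$ maps each space of harmonic forms to the next, i.e. that $L\mathcal{H}^{i,0}_\partial(M)\subseteq\mathcal{H}^{i+2,0}_\partial(M)$ — equivalently, that $[L,\partial^*]$ behaves well. On a balanced HKT manifold $[\partial^*,L]=-\partial_J$ by Proposition \ref{prop:HKT_identities}, which is \emph{not} zero, so one must check instead that if $\partial\alpha=0=\partial^*\alpha$ then $\partial(L\alpha)=L\partial\alpha=0$ (immediate from $[\partial,L]=0$) and $\partial^*(L\alpha)=L\partial^*\alpha-\partial_J\alpha=-\partial_J\alpha$, which vanishes because $\alpha\in\mathcal{H}^{i,0}_\partial(M)=\mathcal{H}^{i,0}_{\partial_J}(M)$ is also $\partial_J$-harmonic, hence $\partial_J\alpha=0$. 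So the subtlety is genuinely resolved by the coincidence of the three spaces of harmonic forms, and once that is in hand the HLC transfers verbatim from the Lefschetz-complex statement of Theorem \ref{thm: deldellambda-lemma}. Everything else is bookkeeping.
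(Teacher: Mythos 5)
Your final paragraph is, in substance, exactly the paper's proof: reduce to the $\partial$-case via Propositions \ref{prop:equality-Laplacians} and \ref{prop:equality-laplacian-BC}, show that $L$ preserves $\mathcal{H}^{\bullet,0}_{\partial}(M)$ by computing $\partial(L\alpha)=L\partial\alpha=0$ and $\partial^*(L\alpha)=L\partial^*\alpha-\partial_J\alpha=0$ (the last vanishing because $\mathcal{H}^{i,0}_{\partial}(M)=\mathcal{H}^{i,0}_{\partial_J}(M)$), and conclude from the non-degeneracy of $\Omega$. That part is correct and would suffice on its own.

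However, the route you present as the main one --- transferring the HLC from Theorem \ref{thm: deldellambda-lemma} via the claimed equality $\mathcal{H}^{i,0}_L(M)=\mathcal{H}^{i,0}_{\partial}(M)$ --- contains a genuine error. Only the inclusion $\mathcal{H}^{i,0}_{\partial}(M)\subseteq\mathcal{H}^{i,0}_L(M)$ holds. The reverse inclusion is false: an element of $\mathcal{H}^{p,0}_L(M)$ satisfies $\partial\alpha=0$ and $\partial^\Lambda\alpha=\partial_J^*\alpha=0$, which is a mixture of conditions from two different Laplacians and does \emph{not} imply $\Delta_{\partial}\alpha=0$; your justification silently replaces the hypothesis ``$\partial\alpha=0$ and $\partial_J^*\alpha=0$'' by ``$\Delta_{\partial_J}\alpha=0$'', which is not given. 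In fact $\mathcal{H}^{p,0}_L(M)$ contains all of $\mathrm{Im}\,\partial\partial^\Lambda$ (since $[\partial,\partial^\Lambda]=0$ and $(\partial^\Lambda)^2=0$), hence is infinite-dimensional in intermediate degrees, whereas $\mathcal{H}^{p,0}_{\partial}(M)$ is finite-dimensional; the operator $\partial\oplus\partial_J^*$ is not elliptic. Moreover the paper's logical order is the opposite of yours: Theorem \ref{thm:hlc} is proved first by the direct computation, and only afterwards is Theorem \ref{thm: deldellambda-lemma} invoked to deduce the $\partial\partial^\Lambda$-lemma and the existence of Lefschetz-harmonic representatives as a corollary, so using that theorem as input here buys you nothing. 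Delete the $\mathcal{H}_L$ detour and promote your last paragraph to the proof; you should also say a word about surjectivity of $L^{n-i}$ on the harmonic spaces (injectivity is what non-degeneracy of $\Omega$ gives directly), e.g.\ via the equality $h^{i,0}_{\partial}=h^{2n-i,0}_{\partial}$ or the $\mathfrak{sl}(2)$-action on the finite-dimensional space $\bigoplus_p\mathcal{H}^{p,0}_{\partial}(M)$.
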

\begin{proof}
In view of Propositions \ref{prop:equality-Laplacians}, \ref{prop:equality-laplacian-BC} it is sufficient to prove that
$$
L^{n-i}:\mathcal{H}^{i,0}_{\partial}(M)\to \mathcal{H}^{2n-i,0}_{\partial}(M)
$$
are isomorphisms.
Notice that by hypothesis $\partial\Omega=\partial_J\Omega=0$, hence
$$
[\partial,L]=0\,,\qquad
[\partial_J,L]=0\,.
$$
Let $\alpha\in \mathcal{H}^{i,0}_{\partial}(M)=\mathcal{H}^{i,0}_{\partial_J}(M)$. Then,
$$
\partial\alpha=0\,,\quad
\partial_J\alpha=0\,,\quad
\partial^*\alpha=0\,,\quad
\partial^*_J\alpha=0.
$$
As a consequence
$$
\partial (L^{n-i}\alpha)=L^{n-i}\partial\alpha=0,
$$
and, using $[\partial^*,L]=-\partial_J$, 
$$
\partial^*(L^{n-i}\alpha)=L^{n-i}\partial^*\alpha-
(n-i)L^{n-i-1}\partial_J\alpha=0.
$$
Hence, $L^{n-i}\alpha\in\mathcal{H}^{2n-i,0}_{\partial}(M)$. 
The result follows from $\Omega$ being non-degenerate. 
\end{proof}

Notice that combining this result with Theorem \ref{thm: deldellambda-lemma} we have
\begin{cor}
Let $(M,I,J,K,\Omega)$ be a compact $4n$-dimensional balanced HKT manifold, then it satisfies the $\del\del^\Lambda$-lemma and there exists a Lefschetz harmonic representative in each Dolbeault cohomology class of $H^{\bullet,0}_{\del}(M)$.
\end{cor}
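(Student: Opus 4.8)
The plan is to deduce this corollary purely formally, by combining the Hard Lefschetz statement of Theorem~\ref{thm:hlc} with the chain of equivalences in Theorem~\ref{thm: deldellambda-lemma}. A compact balanced HKT manifold is in particular hypercomplex with a non-degenerate $(2,0)$-form $\Omega$ satisfying $\del\Omega=0$, so Theorem~\ref{thm: deldellambda-lemma} applies and asserts that the $\del\del^\Lambda$-lemma, the existence of a Lefschetz harmonic representative in each class of $H^{\bullet,0}_\del(M)$, and the Hard Lefschetz condition for $(H^{\bullet,0}_\del(M),L)$ are all equivalent. Hence it suffices to verify the last of these.

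First I would use that, $M$ being compact, each Dolbeault cohomology group is isomorphic (via a fixed hyperhermitian metric) to the corresponding space of harmonic forms, i.e. $\mathcal{H}^{p,0}_\del(M)\cong H^{p,0}_\del(M)$ for every $p$, a harmonic form being sent to its $\del$-class. Theorem~\ref{thm:hlc} tells us that $L^{n-i}\colon\mathcal{H}^{i,0}_\del(M)\to\mathcal{H}^{2n-i,0}_\del(M)$ is an isomorphism; moreover, as shown in the proof of that theorem, if $\alpha\in\mathcal{H}^{i,0}_\del(M)$ then $L^{n-i}\alpha$ is again $\del$-harmonic (one uses $[\del,L]=0$ together with $[\del^*,L]=-\del_J$ and $\del_J\alpha=0$). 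Therefore the square relating $L^{n-i}$ on harmonic forms and the cup product with $[\Omega]^{n-i}$ on Dolbeault cohomology commutes through the Hodge isomorphism, so the induced map $L^{n-i}\colon H^{i,0}_\del(M)\to H^{2n-i,0}_\del(M)$ is an isomorphism as well. This is precisely the HLC for $(H^{\bullet,0}_\del(M),L)$.

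Once the HLC for $(H^{\bullet,0}_\del(M),L)$ is established, Theorem~\ref{thm: deldellambda-lemma} immediately yields the two remaining assertions: the $\del\del^\Lambda$-lemma holds, and every class in $H^{\bullet,0}_\del(M)$ admits a Lefschetz harmonic representative, i.e. a representative $\alpha$ with $\del\alpha=0=\del^\Lambda\alpha$. The only point requiring a little care — and the only real obstacle — is the compatibility just discussed: that Hard Lefschetz at the level of $\del$-harmonic forms (what Theorem~\ref{thm:hlc} provides) genuinely transports to Hard Lefschetz at the level of the cohomology $H^{\bullet,0}_\del(M)$ (what Theorem~\ref{thm: deldellambda-lemma} consumes). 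This is settled by the observation above that $L$ preserves $\del$-harmonicity on a balanced HKT manifold; everything else is a direct invocation of the two quoted theorems.
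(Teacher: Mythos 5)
Your proposal is correct and follows exactly the route the paper intends: the corollary is stated there as an immediate consequence of combining Theorem~\ref{thm:hlc} with the equivalences of Theorem~\ref{thm: deldellambda-lemma}, with no further argument given. Your added observation — that $L^{n-i}$ preserves $\del$-harmonicity (from the proof of Theorem~\ref{thm:hlc}), so the Hard Lefschetz isomorphism on $\mathcal{H}^{\bullet,0}_\del(M)$ transports through the Hodge isomorphism to the HLC on $H^{\bullet,0}_\del(M)$ — is precisely the detail the paper leaves implicit, and it is correctly handled.
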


\begin{rem}
Notice that, in general, on  a compact $4n$-dimensional HKT manifold $(M,I,J,K,\Omega)$ we cannot expect the HLC for $H^{i,0}_{\partial}(M)$. Indeed, there are examples of HKT manifolds with $K_{(M,I)}$ holomorphically non-trivial and $H^{2n,0}_{\del}\simeq H^{0,2n}_{\delbar}\simeq H^{2n,0}_{\delbar}=\left\lbrace 0\right\rbrace$ (e.g. quaternionic Hopf surfaces).\\
But HLC would imply that
$$
L^{n}:\mathcal{H}^{0,0}_{\partial}(M)\to \mathcal{H}^{2n,0}_{\partial}(M),
$$
is an isomorphism, which is absurd.
\end{rem}

\begin{prop}\label{prop:BC=A}
Let $(M,I,J,K,\Omega)$ be a compact $4n$-dimensional balanced HKT manifold, then for every $p$ we have
$$
\mathcal{H}^{p,0}_{\mathrm{BC}}(M)=\mathcal{H}^{p,0}_{\mathrm{A}}(M).
$$
\end{prop}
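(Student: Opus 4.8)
The plan is to reduce everything to the explicit descriptions of harmonic forms recalled just before the statement, combined with the already-established coincidences $\mathcal{H}^{q,0}_{\mathrm{BC}}(M)=\mathcal{H}^{q,0}_{\partial}(M)=\mathcal{H}^{q,0}_{\partial_J}(M)$ for all $q$ (Propositions \ref{prop:equality-Laplacians} and \ref{prop:equality-laplacian-BC}) and the operator $*$ of \eqref{Hod}. The key preliminary remark is that, by those coincidences, a form lying in $\mathcal{H}^{q,0}_{\mathrm{BC}}(M)$ is in fact annihilated by \emph{all} of $\partial,\partial_J,\partial^*,\partial_J^*$; and that, since $M$ is balanced, the operator $*\colon A^{p,0}(M)\to A^{2n-p,0}(M)$ of \eqref{Hod} computes the formal adjoints, $\partial^*=-*\partial*$ and $\partial_J^*=-*\partial_J*$, with $*^2|_{A^{p,0}(M)}=(-1)^p$.

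First I would check the easy inclusion $\mathcal{H}^{p,0}_{\mathrm{BC}}(M)\subseteq\mathcal{H}^{p,0}_{\mathrm{A}}(M)$: if $\alpha\in\mathcal{H}^{p,0}_{\mathrm{BC}}(M)$, then $\partial^*\alpha=\partial_J^*\alpha=0$ and $\partial\partial_J\alpha=\partial(\partial_J\alpha)=0$, so the characterization of $\mathcal{H}^{p,0}_{\mathrm{A}}(M)$ applies. For the reverse inclusion, take $\alpha\in\mathcal{H}^{p,0}_{\mathrm{A}}(M)$, i.e. $\partial^*\alpha=\partial_J^*\alpha=0$ and $\partial\partial_J\alpha=0$, and apply $*$: from $\partial^*=-*\partial*$, $\partial_J^*=-*\partial_J*$ and the invertibility of $*$ one reads off $\partial(*\alpha)=0$ and $\partial_J(*\alpha)=0$, while a short sign-chase using $*^2=(-1)^{\bullet}$ and the anticommutation $\partial\partial_J=-\partial_J\partial$ gives $\partial_J^*\partial^*(*\alpha)=\pm\,*\partial\partial_J\alpha=0$. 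Hence $*\alpha\in\mathcal{H}^{2n-p,0}_{\mathrm{BC}}(M)$.

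Now the preliminary remark finishes the job: $\mathcal{H}^{2n-p,0}_{\mathrm{BC}}(M)=\mathcal{H}^{2n-p,0}_{\partial}(M)=\mathcal{H}^{2n-p,0}_{\partial_J}(M)$, so $*\alpha$ is moreover killed by $\partial^*$ and $\partial_J^*$; feeding this into $\partial^*(*\alpha)=(-1)^{p+1}*\partial\alpha$ and $\partial_J^*(*\alpha)=(-1)^{p+1}*\partial_J\alpha$ forces $\partial\alpha=0$ and $\partial_J\alpha=0$. Together with the standing hypotheses $\partial^*\alpha=\partial_J^*\alpha=0$, this says precisely that $\alpha\in\mathcal{H}^{p,0}_{\partial}(M)=\mathcal{H}^{p,0}_{\mathrm{BC}}(M)$, completing the argument. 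The only delicate point is the bookkeeping of the signs coming from $*^2|_{A^{p,0}(M)}=(-1)^p$ and from $\partial\partial_J=-\partial_J\partial$, which is entirely mechanical; the genuine content is the balanced hypothesis, which legitimizes using $*$ of \eqref{Hod} as a Hodge star for the relevant $L^2$-product and which underlies Propositions \ref{prop:equality-Laplacians} and \ref{prop:equality-laplacian-BC}. An alternative, closer in spirit to Proposition \ref{Bott-Chern}, would be to establish the identity $\Delta_{\mathrm{A}}=\Delta_{\partial}\Delta_{\partial}+\partial\partial^*+\partial_J\partial_J^*$ on a compact balanced HKT manifold (the ``$*$-dual'' of Proposition \ref{prop:equality-laplacian-BC}) and then read off $\ker\Delta_{\mathrm{A}}$ directly.
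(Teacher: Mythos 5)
Your proof is correct, and while the easy inclusion $\mathcal{H}^{p,0}_{\mathrm{BC}}(M)\subseteq\mathcal{H}^{p,0}_{\mathrm{A}}(M)$ is exactly the paper's argument (both hinge on Propositions \ref{prop:equality-Laplacians} and \ref{prop:equality-laplacian-BC} to upgrade BC-harmonicity to simultaneous $\partial$- and $\partial_J$-harmonicity), your treatment of the reverse inclusion genuinely differs from the paper's. The paper disposes of it by a dimension count: $h^{p,0}_{\mathrm{BC}}(M)=h^{2n-p,0}_{\mathrm{BC}}(M)$ by Hard Lefschetz (Theorem \ref{thm:hlc}) and $h^{2n-p,0}_{\mathrm{BC}}(M)=h^{p,0}_{\mathrm{A}}(M)$ by the duality of \cite[Remark 21]{grantcharov-lejmi-verbitsky}, so the inclusion of equidimensional finite-dimensional spaces is an equality. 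You instead argue element-wise: you re-derive the duality at the level of forms by checking that $*$ carries $\mathcal{H}^{p,0}_{\mathrm{A}}(M)$ into $\mathcal{H}^{2n-p,0}_{\mathrm{BC}}(M)$ (your sign computations are right: $\partial(*\alpha)=\partial_J(*\alpha)=0$ follow from invertibility of $*$, and $\partial_J^*\partial^*(*\alpha)=\pm *\partial\partial_J\alpha=0$), and then exploit the degree-$(2n-p)$ instance of $\mathcal{H}_{\mathrm{BC}}=\mathcal{H}_{\partial}=\mathcal{H}_{\partial_J}$ to conclude $\partial^*(*\alpha)=\partial_J^*(*\alpha)=0$, hence $\partial\alpha=\partial_J\alpha=0$. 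This buys you a self-contained, constructive argument that bypasses both the Hard Lefschetz theorem and the external duality reference, at the cost of the sign bookkeeping and of having to justify (as you correctly do via the balanced hypothesis) that the $*$ of \eqref{Hod} computes the formal adjoints. The paper's route is shorter given its citations and has the side benefit of recording the numerical identities $h^{p,0}_{\mathrm{BC}}=h^{2n-p,0}_{\mathrm{BC}}=h^{p,0}_{\mathrm{A}}$; your suggested alternative via an Aeppli analogue of Proposition \ref{prop:equality-laplacian-BC} would also work and would be the most symmetric packaging.
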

\begin{proof}
We first show the inclusion $\mathcal{H}^{p,0}_{\mathrm{BC}}(M)\subseteq
\mathcal{H}^{p,0}_{\mathrm{A}}(M)$. Let $\alpha\in\mathcal{H}^{p,0}_{\mathrm{B}C}(M)$. By Propositions \ref{prop:equality-Laplacians}, \ref{prop:equality-laplacian-BC}
$\alpha\in\mathcal{H}^{p,0}_{\partial}(M)=\mathcal{H}^{p,0}_{\partial_J}(M)$, namely
$$
\partial\alpha=0\,,\quad
\partial_J\alpha=0\,,\quad
\partial^*\alpha=0\,,\quad
\partial^*_J\alpha=0.
$$
Hence, $\alpha\in \mathcal{H}^{p,0}_{\mathrm{A}}(M)$.
The opposite inclusion $\mathcal{H}^{p,0}_{\mathrm{A}}(M)\subseteq
\mathcal{H}^{p,0}_{\mathrm{BC}}(M)$ follows from Theorem \ref{thm:hlc} and \cite[Remark 21]{grantcharov-lejmi-verbitsky}, indeed for every $p$,
\[
h^{p,0}_{\mathrm{BC}}(M)=h^{2n-p,0}_{\mathrm{BC}}(M)=h^{p,0}_\mathrm{A}(M).\qedhere
\]
\end{proof}

As a corollary we have

\begin{cor}
Let $(M,I,J,K,\Omega)$ be a compact balanced HKT manifold, then for every $p$,
$$
H^{p,0}_{\mathrm{BC}}(M)\simeq H^{p,0}_{\mathrm{A}}(M).
$$
\end{cor}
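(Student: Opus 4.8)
The plan is to combine Proposition \ref{prop:BC=A} with the Hodge-theoretic identification of the quaternionic cohomology groups with their spaces of harmonic representatives. First I would recall that, as noted in the introduction following \cite{grantcharov-lejmi-verbitsky}, once a hyperhermitian metric is fixed each of the groups $H^{p,0}_{\mathrm{BC}}(M)$ and $H^{p,0}_{\mathrm{A}}(M)$ is finite-dimensional and canonically isomorphic to the kernel of the associated Laplacian, i.e. $H^{p,0}_{\mathrm{BC}}(M)\cong\mathcal{H}^{p,0}_{\mathrm{BC}}(M)$ and $H^{p,0}_{\mathrm{A}}(M)\cong\mathcal{H}^{p,0}_{\mathrm{A}}(M)$. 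On a compact balanced HKT manifold we take the metric to be the balanced HKT metric, so that these isomorphisms are in force and, by the discussion preceding Proposition \ref{prop:equality-Laplacians}, the adjoints $\partial^*,\partial_J^*$ entering $\Delta_{\mathrm{BC}}$ and $\Delta_{\mathrm{A}}$ coincide with the genuine Riemannian adjoints (the two $L^2$-products $(\cdot,\cdot)_1$ and $(\cdot,\cdot)_2$ agreeing up to a positive constant in the balanced case).

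With this in place the corollary is immediate: Proposition \ref{prop:BC=A} gives $\mathcal{H}^{p,0}_{\mathrm{BC}}(M)=\mathcal{H}^{p,0}_{\mathrm{A}}(M)$ as subspaces of $A^{p,0}(M)$, hence
$$
H^{p,0}_{\mathrm{BC}}(M)\cong\mathcal{H}^{p,0}_{\mathrm{BC}}(M)=\mathcal{H}^{p,0}_{\mathrm{A}}(M)\cong H^{p,0}_{\mathrm{A}}(M)
$$
for every $p$, which is what we wanted.

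There is no real obstacle here beyond bookkeeping: the substantive content has already been established in Proposition \ref{prop:BC=A} (itself resting on Theorem \ref{thm:hlc} and the numerical equality $h^{p,0}_{\mathrm{BC}}=h^{2n-p,0}_{\mathrm{BC}}=h^{p,0}_{\mathrm{A}}$ from \cite[Remark~21]{grantcharov-lejmi-verbitsky}), so the only point to check carefully is that one is consistently using the Hodge decomposition attached to the balanced metric for \emph{both} cohomologies, which is exactly what the setup of Section \ref{Sec:HKT} provides.
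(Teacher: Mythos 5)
Your proof is correct and follows exactly the route the paper intends: the corollary is deduced from Proposition \ref{prop:BC=A} together with the standard Hodge-theoretic isomorphisms $H^{p,0}_{\mathrm{BC}}(M)\cong\mathcal{H}^{p,0}_{\mathrm{BC}}(M)$ and $H^{p,0}_{\mathrm{A}}(M)\cong\mathcal{H}^{p,0}_{\mathrm{A}}(M)$ recalled from \cite{grantcharov-lejmi-verbitsky}. Your additional check that the adjoints used in the Laplacians agree with the genuine Riemannian adjoints in the balanced case is a sensible piece of bookkeeping that the paper leaves implicit.
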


\section{Formality of HKT manifolds}\label{Sec:form}

It is well known that formality in the sense of Sullivan is an obstruction to K\"ahlerianity, more precisely compact complex manifolds satisfying the $\partial\overline\partial$-lemma are formal (see \cite{deligne-griffiths-morgan-sullivan}).
However, notice that the HKT condition does not imply formality, indeed there are examples of non tori nilmanifolds that are HKT but it is well know that  non tori nilmanifolds are not formal in the sense of Sullivan \cite{hasegawa}.\\

In this section we study formality for compact hypercomplex manifolds. We first recall some definitions.\\
Let $(\mathcal{A},d_{\mathcal{A}})$ and $(\mathcal{B},d_{\mathcal{B}})$
be two differential graded algebras (DGA for short) over a field $\mathbb{K}$. A \emph{DGA-homomorphism} between $\mathcal{A}$ and $\mathcal{B}$
is a $\mathbb{K}$-linear map $f:\mathcal{A}\longrightarrow\mathcal{B}$ such
that
\begin{itemize}
\item[i)] $f(\mathcal{A}^i)\subset\mathcal{B}^i$;
\item[ii)] $f(\alpha\cdot\beta)=f(\alpha)\cdot f(\beta)$;
\item[iii)] $d_{\mathcal{B}}\circ f= f\circ d_{\mathcal{A}}$.
\end{itemize}
Any DGA-homomorphism $f:(\mathcal{A},d_{\mathcal{A}})\longrightarrow(\mathcal{B},
d_{\mathcal{B}})$ induces a DGA-homomorphism in cohomology
$$
H(f):(H^{\bullet}(\mathcal{A},d_{\mathcal{A}}),0)\longrightarrow (H^{\bullet}(
\mathcal{B},d_{\mathcal{B}}),0)\,.
$$
A DGA-homomorphism $f:(\mathcal{A},d_{\mathcal{A}})\longrightarrow(\mathcal{B},
d_{\mathcal{B}})$ is called \emph{quasi-isomorphism} if $H(f)$ is an
isomorphism.\\
Two DGA $(\mathcal{A},d_{\mathcal{A}})$ and $(\mathcal{B},d_{\mathcal{B}})$
are said to be \emph{equivalent} if there exists a sequence of
quasi-isomorphisms of the following form:
\[
\begin{array}{lclclclclclcl}
 & & (\mathcal{C}_1,d_{\mathcal{C}_1})&&&&\cdots
 &&&&(\mathcal{C}_n,d_{\mathcal{C}_n})&&\\
 &\swarrow &&\searrow &&\swarrow &&\searrow &&\swarrow &&\searrow &\\
(\mathcal{A},d_{\mathcal{A}})&&&&(\mathcal{C}_2,d_{\mathcal{C}_2})
&&&&\cdots &&&& (\mathcal{B},d_{\mathcal{B}}).
\end{array}
\]
A DGA $(\mathcal{A},d_{\mathcal{A}})$ is called \emph{formal} if
$(\mathcal{A},d_{\mathcal{A}})$ is equivalent to a DGA
$(\mathcal{B},d_{\mathcal{B}}=0)$.

We show now that for a compact hypercomplex manifold $M$ instead of $(A^\bullet(M),d)$, the appropriate DGA to consider in this context is $(A^{\bullet,0}(M),\del)$ by proving the following

\begin{theorem}\label{thm:deldelj-lemma-formal}
Let $(M,I,J,K)$ be a compact hypercomplex manifold satisfying the $\del\del_J$-lemma, then the DGA $(A^{\bullet,0}(M),\del)$ is formal.
\end{theorem}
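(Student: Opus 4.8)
The plan is to run the Deligne--Griffiths--Morgan--Sullivan argument \cite{deligne-griffiths-morgan-sullivan} with the pair $(\partial,\partial_J)$ in place of $(\partial,\overline\partial)$, working entirely inside the graded algebra $A:=A^{\bullet,0}(M)$. Recall that, by integrability of $I,J,K$, both $\partial$ and $\partial_J=J^{-1}\overline\partial J$ are degree $+1$ derivations of $A$, they anticommute ($\partial\partial_J=-\partial_J\partial$), and each squares to zero. I would connect $(A,\partial)$ to a DGA with trivial differential through the subalgebra $A_c:=\mathrm{Ker}\bigl(\partial_J|_A\bigr)$ by the zig-zag
\[
(A,\partial)\ \xleftarrow{\ \iota\ }\ (A_c,\partial)\ \xrightarrow{\ \pi\ }\ \bigl(H^{\bullet,0}_{\partial_J}(M),0\bigr)\,,
\]
and show that both $\iota$ and $\pi$ are DGA quasi-isomorphisms; formality of $(A^{\bullet,0}(M),\partial)$ follows at once.

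First I would set up the two maps. Since $\partial_J$ is a derivation, $A_c$ is a graded subalgebra of $A$; since $\partial_J\partial=-\partial\partial_J$, $\partial$ preserves $A_c$, so $(A_c,\partial)$ is a sub-DGA and the inclusion $\iota$ is a DGA-homomorphism. Next, $\partial_J A\subseteq A_c$ because $\partial_J^2=0$, and $\partial_J A$ is a two-sided graded ideal of $A_c$ (again since $\partial_J$ is a derivation vanishing on $A_c$), so the quotient algebra is exactly $A_c/\partial_J A=H^{\bullet,0}_{\partial_J}(M)$; moreover $\partial$ descends to it because $\partial(\partial_J A)=-\partial_J(\partial A)\subseteq\partial_J A$, giving the DGA-homomorphism $\pi$. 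The induced differential on $H^{\bullet,0}_{\partial_J}(M)$ vanishes: if $\partial_J\alpha=0$ then $\partial\alpha\in\mathrm{Ker}\,\partial\cap\mathrm{Ker}\,\partial_J\cap\mathrm{Im}\,\partial$, so the $\partial\partial_J$-lemma gives $\partial\alpha=\partial\partial_J\gamma=-\partial_J(\partial\gamma)\in\partial_J A$, i.e.\ $[\partial\alpha]=0$ in $H^{\bullet,0}_{\partial_J}(M)$. Thus the right-hand term of the zig-zag genuinely is a DGA with zero differential.

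It remains to check that $\iota$ and $\pi$ induce isomorphisms on $\partial$-cohomology, and all four verifications follow one template: correct a given representative by a $\partial$- or $\partial_J$-exact term, produced by the $\partial\partial_J$-lemma applied to an element of $\mathrm{Ker}\,\partial\cap\mathrm{Ker}\,\partial_J\cap(\mathrm{Im}\,\partial+\mathrm{Im}\,\partial_J)$, so as to land in $A_c$ without altering the relevant class. Concretely: for surjectivity of $\iota_*$, given $\partial\alpha=0$ write $\partial_J\alpha=\partial\partial_J\beta$ and replace $\alpha$ by $\alpha+\partial\beta\in A_c$; for injectivity of $\iota_*$, given $\alpha=\partial\eta\in A_c$ write $\partial_J\eta=\partial\partial_J\zeta$ and replace $\eta$ by $\eta+\partial\zeta\in A_c$ to see $[\alpha]=0$ in $H^{\bullet,0}_\partial(A_c)$; for surjectivity of $\pi_*$, given $\partial_J\omega=0$ write $\partial\omega=\partial\partial_J\tau$ and note $\omega-\partial_J\tau\in A_c\cap\mathrm{Ker}\,\partial$ is $\partial_J$-cohomologous to $\omega$; for injectivity of $\pi_*$, an element of $\mathrm{Ker}\,\partial\cap\mathrm{Ker}\,\partial_J$ that is $\partial_J$-exact is, by the $\partial\partial_J$-lemma, of the form $\partial\partial_J\sigma=\partial(\partial_J\sigma)$ with $\partial_J\sigma\in A_c$, hence $\partial$-exact in $A_c$. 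In each case one checks that $\partial_J\alpha$, $\partial_J\eta$, $\partial\omega$ really do lie in $\mathrm{Ker}\,\partial\cap\mathrm{Ker}\,\partial_J\cap(\mathrm{Im}\,\partial+\mathrm{Im}\,\partial_J)$, which is immediate from $\partial^2=\partial_J^2=0$ and $\partial\partial_J=-\partial_J\partial$.

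Granting these, $\iota$ and $\pi$ are DGA quasi-isomorphisms, so $(A^{\bullet,0}(M),\partial)$ is equivalent to $(H^{\bullet,0}_{\partial_J}(M),0)$ and is therefore formal. The only delicate point I anticipate is the bookkeeping in the previous paragraph: unlike the classical $\partial\overline\partial$-lemma situation, $A^{\bullet,0}(M)$ carries a single grading rather than a bigrading, so one has to make sure that the whole argument uses nothing beyond the graded-algebra structure, the anticommuting square-zero derivations $\partial,\partial_J$, and the $\partial\partial_J$-lemma — which, as the template above shows, is indeed the case.
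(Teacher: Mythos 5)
Your proof is correct and follows essentially the same route as the paper: the same zig-zag $(A^{\bullet,0},\partial)\leftarrow(\mathrm{Ker}\,\partial_J,\partial)\rightarrow(H^{\bullet,0}_{\partial_J},0)$, with the same three verifications (inclusion and projection are quasi-isomorphisms, and $\partial$ induces the zero map on $H^{\bullet,0}_{\partial_J}$), each resting on the $\partial\partial_J$-lemma exactly as in the paper's Lemmas \ref{lemma1}--\ref{lemma3}. The only cosmetic difference is that you correct representatives (e.g.\ replacing $\alpha$ by $\alpha+\partial\beta$) where the paper sometimes applies the lemma directly to the class itself; both are valid.
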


In order to prove this Theorem, we will need three lemmas.

\begin{lemma}\label{lemma1}
Let $(M,I,J,K)$ be a compact hypercomplex manifold satisfying the $\del\del_J$-lemma, then the natural inclusion
$$
i:\left( A^{\bullet,0}(M)\cap\mathrm{Ker}\,\del_J,\, \del \right)
\to\left(A^{\bullet,0}(M),\, \del\right) 
$$
is a DGA quasi-isomorphism.
\end{lemma}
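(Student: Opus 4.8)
The plan is to mimic the classical proof that the $\del\delbar$-lemma implies formality (the Deligne--Griffiths--Morgan--Sullivan argument), but carried out in the single complex $(A^{\bullet,0}(M),\del)$ using the twisted operator $\del_J$ in place of $\delbar$. The key structural input is that $\del$ and $\del_J$ anticommute and both square to zero, so the $\del\del_J$-lemma makes sense and has the usual formal consequences. The strategy is to produce the standard zig-zag of quasi-isomorphisms
\[
\bigl(A^{\bullet,0}(M),\del\bigr)\ \xleftarrow{\ i\ }\ \bigl(A^{\bullet,0}(M)\cap\mathrm{Ker}\,\del_J,\ \del\bigr)\ \xrightarrow{\ \pi\ }\ \Bigl(\tfrac{A^{\bullet,0}(M)\cap\mathrm{Ker}\,\del_J}{\del_J A^{\bullet,0}(M)},\ 0\Bigr),
\]
where the middle term is the sub-DGA of $\del_J$-closed $(\bullet,0)$-forms (it is a subalgebra because $\del_J$ is a derivation), and the right-hand term carries the zero differential because on $\mathrm{Ker}\,\del_J$ one has, by the $\del\del_J$-lemma, $\del\alpha=\del_J\beta$ forcing $\del\alpha\in\del_J A^{\bullet,0}(M)$; moreover $\del_J A^{\bullet,0}(M)$ is an ideal in $\mathrm{Ker}\,\del_J$, again since $\del_J$ is a derivation. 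This exhibits $(A^{\bullet,0}(M),\del)$ as equivalent to a DGA with zero differential, which is precisely formality.

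The three lemmas announced after the statement are exactly the three ingredients: Lemma \ref{lemma1} asserts that the inclusion $i$ is a quasi-isomorphism; the subsequent lemmas (not shown in the excerpt) will presumably assert that the projection $\pi$ onto the quotient is well-defined, multiplicative and a quasi-isomorphism, and perhaps that the quotient does indeed have zero induced differential. So at the level of the Theorem itself, once the three lemmas are in hand, the proof is just the assembly: first I would note that $i$ and $\pi$ are DGA-homomorphisms (checking i), ii), iii) of the definition — the only nontrivial point being that $\del_J A^{\bullet,0}\subseteq\mathrm{Ker}\,\del_J$ and is an ideal, both immediate from $\del_J^2=0$ and the Leibniz rule), then invoke the lemmas to see each is a quasi-isomorphism, and finally conclude by the definition of formality that $(A^{\bullet,0}(M),\del)$ is equivalent to $\bigl(H^{\bullet}(\ldots),0\bigr)$.

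For Lemma \ref{lemma1} itself the argument is the familiar one: surjectivity of $H(i)$ uses that any $\del$-closed form $\alpha$ satisfies $\del_J\alpha=0$ after modification by a $\del$-exact term — indeed $\del\alpha=0$ and $\del\del_J\alpha = -\del_J\del\alpha=0$, so $\del_J\alpha$ is $\del$-closed and $\del_J$-closed; by the $\del\del_J$-lemma (applied to $\del_J\alpha$, which is also $\del$-exact in the appropriate sense, or rather one writes $\del_J\alpha = \del\del_J\gamma$) one adjusts $\alpha$ within its $\del$-class to land in $\mathrm{Ker}\,\del_J$; injectivity of $H(i)$ uses that if $\beta\in\mathrm{Ker}\,\del_J$ is $\del$-exact, say $\beta=\del\eta$, then by the $\del\del_J$-lemma $\beta=\del\del_J\xi$ for some $\xi$, and $\del_J\xi\in\mathrm{Ker}\,\del_J$ provides a primitive inside the subcomplex. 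The main obstacle is purely bookkeeping: one has to be careful that the $\del\del_J$-lemma, as stated for forms of total appropriate type, applies to the specific forms arising here (they are $(\bullet,0)$-forms, which is exactly the range where $\del$ and $\del_J$ both act, so this is fine), and one must check the ideal/subalgebra properties that make the quotient a DGA. None of these steps is deep; the content of the theorem is entirely encapsulated in the $\del\del_J$-lemma plus the DGMS formal argument.
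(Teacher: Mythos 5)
Your argument for Lemma \ref{lemma1} is correct and follows essentially the same route as the paper: surjectivity of $H_{\del}(i)$ by noting that $\del_J\alpha\in\mathrm{Ker}\,\del\cap\mathrm{Ker}\,\del_J\cap\mathrm{Im}\,\del_J$, applying the $\del\del_J$-lemma to correct $\alpha$ by a $\del$-exact term so that it lands in $\mathrm{Ker}\,\del_J$, and injectivity by writing a $\del$-exact, $\del_J$-closed form as $\del\del_J\xi$ with primitive $\del_J\xi$ inside the subcomplex. The surrounding discussion of the zig-zag and formality goes beyond the stated lemma but matches the paper's subsequent Lemmas \ref{lemma2}--\ref{lemma3} and the proof of Theorem \ref{thm:deldelj-lemma-formal}.
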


\begin{proof}
Notice that $\left( A^{\bullet,0}(M)\cap\mathrm{Ker}\,\del_J,\, \del \right)$ is a DGA and the inclusion
$$
i:\left( A^{\bullet,0}(M)\cap\mathrm{Ker}\,\del_J,\, \del \right)
\to\left(A^{\bullet,0}(M),\, \del\right) 
$$
is a morphism of DGAs. We are left to prove that the map induced in cohomology
$$
H_{\del}(i):H_{\del}\left( A^{\bullet,0}(M)\cap\mathrm{Ker}\,\del_J,\, \del \right)
\to H^{\bullet,0}_{\del}(M)
$$
is an isomorphism.\\
We first prove that $H_{\del}(i)$ is injective. Fix $k$, and let $[\alpha]\in H_{\del}\left( A^{k,0}(M)\cap\mathrm{Ker}\,\del_J,\, \del \right)$ such that
$H_{\del}(i)([\alpha])=[\alpha]_{\del}=0$, hence
$$
\alpha\in \mathrm{Ker}\,\del_J\cap\mathrm{Im}\,\del=\mathrm{Im}\,\del\del_J
$$
i.e., $\alpha=\del\left(\del_J\beta\right)$ for some form $\beta\in A^{k-2,0}(M)$ and clearly $\del_J\beta\in A^{k-1,0}(M)\cap\mathrm{Ker}\,\del_J$, hence
$$
[\alpha]=0\in H_{\del}^{k,0}\left( A^{\bullet,0}(M)\cap\mathrm{Ker}\,\del_J,\, \del \right)
$$
and so $H_{\del}(i)$ is injective.\\
We now prove that $H_{\del}(i)$ is is surjective. Let $a\in H^{k,0}_{\del}(M)$, $a=[\alpha]$ with $\del\alpha=0$. Consider,
$$
\del_J\alpha\in \mathrm{Im}\,\del_J\cap\mathrm{Ker}\,\del=\mathrm{Im}\,\del_J\del
$$
hence $\del_J\alpha=\del_J\del\beta$ for some $\beta$. Therefore,
$\del_J(\alpha-\del\beta)=0$ and $\del(\alpha-\del\beta)=\del\alpha=0$. This means that $\alpha-\del\beta$ defines a class in 
$H_{\del}^{k,0}\left( A^{\bullet,0}M\cap\mathrm{Ker}\,\del_J,\, \del \right)$ and
$$
H_{\del}(i)([\alpha-\del\beta])=[\alpha-\del\beta]_{\del}=
[\alpha]=a\,,
$$
concluding the proof.
\end{proof}

\begin{lemma}\label{lemma2}
Let $(M,I,J,K)$ be a compact hypercomplex manifold satisfying the $\del\del_J$-lemma, then the natural projection
$$
p:\left( A^{\bullet,0}(M)\cap\mathrm{Ker}\,\del_J,\, \del \right)
\to\left(H_{\del_J}^{\bullet,0}(M),\, \del\right) 
$$
is a DGA quasi-isomorphism.
\end{lemma}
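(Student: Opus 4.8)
The plan is to check first that $p$ is a well-defined surjective morphism of differential graded algebras, and then to verify that the induced map on $\del$-cohomology is bijective by a short diagram chase, invoking the $\del\del_J$-lemma (in the form already recorded in the excerpt) once for surjectivity and once for injectivity; this is the exact analogue of the classical argument that the $\del\delbar$-lemma forces formality.

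First I would record the structural facts. Since $\del$ and $\del_J$ anticommute and square to zero, $\del$ maps $\mathrm{Ker}\,\del_J$ into itself and $\mathrm{Im}\,\del_J$ into itself, so $\del$ descends to a square-zero operator on $H^{\bullet,0}_{\del_J}(M)$; and since $\del_J=J^{-1}\delbar J$ is a graded derivation, $A^{\bullet,0}(M)\cap\mathrm{Ker}\,\del_J$ is a subalgebra on which $\del$ acts as a derivation, as does the induced $\del$ on $H^{\bullet,0}_{\del_J}(M)$. Hence both $\bigl(A^{\bullet,0}(M)\cap\mathrm{Ker}\,\del_J,\del\bigr)$ and $\bigl(H^{\bullet,0}_{\del_J}(M),\del\bigr)$ are DGAs, and the quotient map $p(\alpha)=[\alpha]_{\del_J}$ is a surjective DGA-homomorphism: it is graded, it is multiplicative because $[\alpha\wedge\beta]_{\del_J}=[\alpha]_{\del_J}\wedge[\beta]_{\del_J}$, and it intertwines the two copies of $\del$ by definition of the induced differential. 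It then remains to prove that $H_\del(p)$ is an isomorphism.

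For surjectivity, let $[\alpha]_{\del_J}$ represent a class of $H_\del\bigl(H^{\bullet,0}_{\del_J}(M),\del\bigr)$, so $\alpha\in\mathrm{Ker}\,\del_J$ and $\del\alpha\in\mathrm{Im}\,\del_J$. Then $\del\alpha$ lies in $\mathrm{Ker}\,\del\cap\mathrm{Ker}\,\del_J\cap\mathrm{Im}\,\del$, so the $\del\del_J$-lemma gives $\del\alpha=\del\del_J\delta$; the form $\tilde\alpha:=\alpha-\del_J\delta$ then satisfies $\del\tilde\alpha=0=\del_J\tilde\alpha$ and $[\tilde\alpha]_{\del_J}=[\alpha]_{\del_J}$, so $[\tilde\alpha]_\del$ maps to the given class. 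For injectivity, let $\alpha\in\mathrm{Ker}\,\del\cap\mathrm{Ker}\,\del_J$ with $H_\del(p)([\alpha]_\del)=0$; then $[\alpha]_{\del_J}=\del[\beta]_{\del_J}$ in $H^{\bullet,0}_{\del_J}(M)$ for some $\beta\in\mathrm{Ker}\,\del_J$, i.e. $\alpha-\del\beta=\del_J\eta$ for some $\eta$, so $\alpha\in\mathrm{Ker}\,\del\cap\mathrm{Ker}\,\del_J\cap(\mathrm{Im}\,\del+\mathrm{Im}\,\del_J)=\mathrm{Im}\,\del\del_J$. Writing $\alpha=\del(\del_J\zeta)$ with $\del_J\zeta\in\mathrm{Ker}\,\del_J$ then shows $[\alpha]_\del=0$ in $H_\del\bigl(A^{\bullet,0}(M)\cap\mathrm{Ker}\,\del_J\bigr)$, as wanted.

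The argument is essentially bookkeeping, so I do not expect a genuine obstacle; the one point needing slight care is confirming that $\del_J$ is a graded derivation, so that the objects in play are genuinely DGAs and $p$ is multiplicative, which follows from $\del_J=J^{-1}\delbar J$ with $\delbar$ a derivation and $J$ acting as an algebra homomorphism on forms. The substantive input is the $\del\del_J$-lemma, used in exactly the shape already displayed in the excerpt, once in each direction.
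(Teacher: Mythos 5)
Your proof is correct and follows essentially the same route as the paper's: verify that $p$ is a DGA morphism and then show injectivity and surjectivity of $H_{\del}(p)$ by direct appeal to the $\del\del_J$-lemma. In fact you are slightly more thorough than the paper at two points: in the injectivity step you correctly pass through $\alpha\in\mathrm{Ker}\,\del\cap\mathrm{Ker}\,\del_J\cap(\mathrm{Im}\,\del+\mathrm{Im}\,\del_J)$ rather than jumping to $\mathrm{Im}\,\del\cap\mathrm{Ker}\,\del_J$, and you supply the correction $\tilde\alpha=\alpha-\del_J\delta$ for surjectivity, a step the paper dismisses as ``immediate.''
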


\begin{proof}
Notice that the projection
$$
p:\left( A^{\bullet,0}(M)\cap\mathrm{Ker}\,\del_J,\, \del \right)
\to\left(H_{\del_J}^{\bullet,0}(M),\, \del\right) 
$$
is a morphism of DGAs. We are left to prove that the map induced in cohomology
$$
H_{\del}(p):H_{\del}^{\bullet,0}\left( A^{\bullet,0}(M)\cap\mathrm{Ker}\,\del_J,\, \del \right)
\to H^{\bullet,0}_{\del}\left(H_{\del_J}^{\bullet,0}(M),\, \del\right) 
$$
is an isomorphism.\\
We first prove that $H_{\del}(p)$ is injective. Fix $k$, and let $[\alpha]\in H_{\del}^{k,0}\left( A^{\bullet,0}(M)\cap\mathrm{Ker}\,\del_J,\, \del \right)$ such that
$H_{\del}(p)([\alpha])=0$. Hence,
$$
\alpha\in \mathrm{Im}\,\del\cap\mathrm{Ker}\,\del_J=\mathrm{Im}\,\del\del_J
$$
i.e., $\alpha=\del\left(\del_J\beta\right)$ for some form $\beta\in A^{k-2,0}(M)$ and clearly $\del_J\beta\in A^{k-1,0}(M)\cap\mathrm{Ker}\,\del_J$, hence
$$
[\alpha]=0\in H^{\bullet,0}_{\del}\left(H_{\del_J}^{\bullet,0}(M),\, \del\right) 
$$
and so $H_{\del}(p)$ is injective.\\
The surjectivity of $H_{\del}(p)$ is immediate.
\end{proof}

\begin{lemma}\label{lemma3}
Let $(M,I,J,K)$ be a compact hypercomplex manifold satisfying the $\del\del_J$-lemma, then $\del$ is the trivial operator on $H_{\del_J}^{\bullet,0}(M)$.
\end{lemma}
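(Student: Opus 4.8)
The plan is to show that the class of $\partial\beta$ in $H^{\bullet,0}_{\partial_J}(M)$ is zero for every $\partial_J$-closed $\beta\in A^{\bullet,0}(M)$. So let $\beta\in A^{k,0}(M)$ with $\partial_J\beta=0$, so that $[\beta]_{\partial_J}\in H^{k,0}_{\partial_J}(M)$ is a well-defined class, and we must compute $\partial[\beta]_{\partial_J}=[\partial\beta]_{\partial_J}$. First I would observe that $\partial\beta$ is automatically $\partial$-exact, hence $\partial$-closed, and it is also $\partial_J$-closed since $\partial_J\partial\beta=-\partial\partial_J\beta=0$. Moreover $\partial\beta\in\mathrm{Im}\,\partial$, so
$$
\partial\beta\in\mathrm{Ker}\,\partial_J\cap\mathrm{Im}\,\partial\subseteq\mathrm{Ker}\,\partial\cap\mathrm{Ker}\,\partial_J\cap(\mathrm{Im}\,\partial+\mathrm{Im}\,\partial_J).
$$
By the $\partial\partial_J$-lemma, this space equals $\mathrm{Im}\,\partial\partial_J$, so there exists $\gamma\in A^{k-1,0}(M)$ with $\partial\beta=\partial\partial_J\gamma$; equivalently $\partial\beta=-\partial_J\partial\gamma\in\mathrm{Im}\,\partial_J$. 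Therefore $[\partial\beta]_{\partial_J}=0$ in $H^{k,0}_{\partial_J}(M)$, which is exactly the statement that $\partial$ acts trivially on $H^{\bullet,0}_{\partial_J}(M)$.

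The only point that needs a word of care is that $\partial$ is well-defined as an operator on $H^{\bullet,0}_{\partial_J}(M)$ in the first place: since $\partial$ and $\partial_J$ anticommute, $\partial$ sends $\partial_J$-closed forms to $\partial_J$-closed forms and $\partial_J$-exact forms to $\partial_J$-exact forms (as $\partial\partial_J\eta=-\partial_J\partial\eta$), so it descends. This is a routine check and I would state it in one line. The genuine content is the single application of the $\partial\partial_J$-lemma above; there is no real obstacle, the argument being the exact analogue of the classical fact that on a $\partial\bar\partial$-manifold the operator $\partial$ is zero on Dolbeault cohomology (as used in the Deligne–Griffiths–Morgan–Sullivan argument for formality). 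With Lemmas~\ref{lemma1}, \ref{lemma2} and \ref{lemma3} in hand, Theorem~\ref{thm:deldelj-lemma-formal} then follows by stringing together the quasi-isomorphisms
$$
\left(A^{\bullet,0}(M),\partial\right)\xleftarrow{\ i\ }\left(A^{\bullet,0}(M)\cap\mathrm{Ker}\,\partial_J,\partial\right)\xrightarrow{\ p\ }\left(H^{\bullet,0}_{\partial_J}(M),\partial\right)=\left(H^{\bullet,0}_{\partial_J}(M),0\right),
$$
exhibiting $(A^{\bullet,0}(M),\partial)$ as equivalent to a DGA with zero differential.
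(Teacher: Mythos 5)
Your proof is correct and follows essentially the same route as the paper: both show that for a $\partial_J$-closed form the image $\partial\beta$ lies in $\mathrm{Im}\,\partial\cap\mathrm{Ker}\,\partial_J$, which the $\partial\partial_J$-lemma identifies with $\mathrm{Im}\,\partial\partial_J\subseteq\mathrm{Im}\,\partial_J$, so the induced class vanishes. Your extra remarks (why $\partial$ descends to $H^{\bullet,0}_{\partial_J}(M)$, and why the stated form of the lemma yields the containment used) only make explicit steps the paper leaves implicit.
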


\begin{proof}
Fix $k$ and let $a=[\alpha]_{\del_J}\in H^{k,0}_{\del_J}(M)$, namely $\del_J\alpha=0$. Now
$$
\del a=[\del\alpha]_{\del_J}
$$
and
$$
\del\alpha\in \mathrm{Im}\,\del\cap\mathrm{Ker}\,\del_J=\mathrm{Im}\,\del_J\del
$$
so $\del\alpha=\del_J\del\beta$ for some $\beta$, giving $\del a=[\del_J\del\beta]_{\del_J}=0\in H^{k+1,0}_{\del_J}(M)$, concluding the proof.
\end{proof}

Now we are able to prove Theorem \ref{thm:deldelj-lemma-formal}.

\begin{proof}
Under the assumptions and as a consequence of Lemmas \ref{lemma1}, \ref{lemma2}, \ref{lemma3} we have the following diagram of quasi-isomorphisms of DGAs,
$$
\xymatrix{
      & \left( A^{\bullet,0}(M)\cap\mathrm{Ker}\,\del_J,\, \del \right) \ar[ld]_{i}^{\text{qis}} \ar[rd]^{p}_{\text{qis}} & \\
     \left(A^{\bullet,0}(M),\, \del\right) & &
      \left(H^{\bullet,0}_{\del_J}(M),\, 0\right)
     }. 
$$
hence, by definition, $\left(A^{\bullet,0}(M),\, \del\right)$ is a formal DGA.
\end{proof}

As a consequence of \cite[Theorem 6]{grantcharov-lejmi-verbitsky} and Theorem \ref{thm:deldelj-lemma-formal} we obtain
\begin{cor}
Let $(M,I,J,K,\Omega)$ be a compact HKT $\mathrm{SL}(n,\mathbb{H})$-manifold, then the DGA $(A^{\bullet,0}(M),\del)$ is formal.
\end{cor}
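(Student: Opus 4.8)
The plan is to deduce the corollary directly from Theorem \ref{thm:deldelj-lemma-formal} by verifying that a compact HKT $\mathrm{SL}(n,\mathbb{H})$-manifold satisfies the $\del\del_J$-lemma. The key input is \cite[Theorem 6]{grantcharov-lejmi-verbitsky}, which (as I read it) asserts exactly this: on a compact HKT $\mathrm{SL}(n,\mathbb{H})$-manifold the $\del\del_J$-lemma holds, i.e. $\mathrm{Ker}\,\del\cap\mathrm{Ker}\,\del_J\cap(\mathrm{Im}\,\del+\mathrm{Im}\,\del_J)=\mathrm{Im}\,\del\del_J$ on $A^{\bullet,0}(M)$. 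Granting that, the corollary is immediate: the hypotheses of Theorem \ref{thm:deldelj-lemma-formal} are met, so $(A^{\bullet,0}(M),\del)$ is formal.

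Concretely, I would write: \emph{Proof.} By \cite[Theorem 6]{grantcharov-lejmi-verbitsky}, a compact HKT $\mathrm{SL}(n,\mathbb{H})$-manifold $(M,I,J,K,\Omega)$ satisfies the $\del\del_J$-lemma. Hence Theorem \ref{thm:deldelj-lemma-formal} applies and yields that the DGA $(A^{\bullet,0}(M),\del)$ is formal. $\square$ That is essentially the whole argument; there is no substantive calculation to carry out here, since all the work has been front-loaded into Theorem \ref{thm:deldelj-lemma-formal} and into the cited result.

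The only point that requires a little care — and the place I would expect a referee to push back — is making sure the quotation of \cite[Theorem 6]{grantcharov-lejmi-verbitsky} is accurate: one should check whether that theorem is stated precisely as ``$\del\del_J$-lemma'' or in an equivalent disguise (e.g. as a statement about harmonic representatives, or about the canonical bundle being holomorphically trivial plus some Hodge-theoretic consequence, or via the quaternionic Dolbeault cohomology being formal as a bigraded object). If the cited statement is phrased differently, I would insert one sentence translating it into the $\del\del_J$-lemma form used in Theorem \ref{thm:deldelj-lemma-formal}; the equivalence is standard and parallels the classical fact that the $\del\delbar$-lemma on a compact complex manifold is equivalent to existence of $\del\delbar$-harmonic representatives. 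I do not anticipate any genuine obstacle beyond this bookkeeping.

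One might also remark, for completeness, that this corollary recovers and refines the well-known fact that HKT geometry alone does not force formality (the nilmanifold examples mentioned in Section \ref{Sec:form}): the extra $\mathrm{SL}(n,\mathbb{H})$ hypothesis, equivalently holomorphic triviality of the canonical bundle together with the HKT condition, is precisely what is needed to obtain the $\del\del_J$-lemma and hence formality of $(A^{\bullet,0}(M),\del)$. I would keep such a remark brief or omit it, since the substance is entirely contained in the two results being combined.
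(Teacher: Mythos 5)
Your proposal matches the paper's argument exactly: the corollary is stated there as an immediate consequence of \cite[Theorem 6]{grantcharov-lejmi-verbitsky} (which gives the $\del\del_J$-lemma on compact HKT $\mathrm{SL}(n,\mathbb{H})$-manifolds) combined with Theorem \ref{thm:deldelj-lemma-formal}, with no further argument supplied. Your cautionary remark about checking the precise phrasing of the cited theorem is reasonable but does not change the substance.
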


We recall now the definition of triple Massey products of a DGA in our setting.

\begin{definition}
Let
$\mathfrak{a}=\left[\alpha\right]\in H^{p,0}_\del(M)$,
$\mathfrak{b}=\left[\beta\right]\in H^{q,0}_\del(M)$ and
$\mathfrak{c}=\left[\gamma\right]\in H^{r,0}_\del(M)$ such that
$\mathfrak{a} \cup \mathfrak{b}=0\in H^{p+q,0}_\del(M)$ and
$\mathfrak{b} \cup \mathfrak{c}=0\in H^{q+r,0}_\del(M)$;
more precisely suppose that $\alpha\wedge\beta=\del \lambda$ and
$\beta\wedge\gamma=\del \mu$ for some $\lambda\in A^{p+q-1,0}$, $\mu\in 
A^{q+r-1,0}$. The \emph{triple $\del$-Massey product} of
$\mathfrak{a},\mathfrak{b},\mathfrak{c}$ is defined as
\[
\left\langle\mathfrak{a},\mathfrak{b},\mathfrak{c}\right\rangle:=
\left[\lambda\wedge\gamma-(-1)^p\alpha\wedge \mu\right]\in 
\frac{H^{p+q+r-1,0}_{\del}(M)}
{H^{p+q-1,0}_{\del}(M)\cup H^{r,0}_{\del}(M)
+H^{p,0}_{\del}(M)\cup H^{q+r-1,0}_{\del}(M)}.
\] 
\end{definition}

Then, since for a formal DGA the associated Massey products vanish we have the following
\begin{cor}\label{cor:triple_Mass_prod}
Let $(M,I,J,K)$ be a compact hypercomplex manifold satisfying the $\del\del_J$-lemma, then
the triple $\del$-Massey products vanish.
\end{cor}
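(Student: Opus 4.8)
The plan is to deduce Corollary \ref{cor:triple_Mass_prod} as a direct consequence of Theorem \ref{thm:deldelj-lemma-formal} together with the standard fact that triple Massey products are an invariant of the quasi-isomorphism type of a differential graded algebra, and in particular vanish for any formal DGA. Since Theorem \ref{thm:deldelj-lemma-formal} tells us that the DGA $(A^{\bullet,0}(M),\del)$ is formal whenever $(M,I,J,K)$ satisfies the $\del\del_J$-lemma, it only remains to explain why the triple $\del$-Massey products, as defined just above, are obstructions to formality -- equivalently, why they vanish on a formal DGA.

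First I would recall that a triple Massey product $\langle \mathfrak{a},\mathfrak{b},\mathfrak{c}\rangle$ is defined whenever $\mathfrak{a}\cup\mathfrak{b}=0$ and $\mathfrak{b}\cup\mathfrak{c}=0$ in $H^{\bullet,0}_\del(M)$, and that its value is a well-defined coset in the indicated quotient; this is exactly the content of the Definition preceding the statement, so I can take it as given. The key point is the behaviour of Massey products under DGA-homomorphisms: if $f\colon(\mathcal{A},d_\mathcal{A})\to(\mathcal{B},d_\mathcal{B})$ is a DGA-homomorphism and $\mathfrak{a},\mathfrak{b},\mathfrak{c}\in H^\bullet(\mathcal{A})$ have defined triple Massey product, then $H(f)\mathfrak{a},H(f)\mathfrak{b},H(f)\mathfrak{c}$ have defined triple Massey product and $H(f)$ carries $\langle\mathfrak{a},\mathfrak{b},\mathfrak{c}\rangle$ into $\langle H(f)\mathfrak{a},H(f)\mathfrak{b},H(f)\mathfrak{c}\rangle$; this follows immediately by applying $f$ to the defining relations $\alpha\wedge\beta=\del\lambda$, $\beta\wedge\gamma=\del\mu$ and to the representative $\lambda\wedge\gamma-(-1)^p\alpha\wedge\mu$, using properties i)--iii) of a DGA-homomorphism. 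Consequently, if $f$ is a quasi-isomorphism, then $H(f)$ is an isomorphism that identifies the Massey products on both sides (including the indecomposable subspaces appearing in the denominators), so Massey products are preserved along any chain of quasi-isomorphisms -- i.e.\ they depend only on the equivalence class of the DGA.

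Next I would observe that on a DGA with zero differential $(\mathcal{B},0)$ every triple Massey product vanishes: the defining relations $\alpha\wedge\beta=0=\del\lambda$ can be solved with $\lambda=0$ and $\mu=0$, so $\langle\mathfrak{a},\mathfrak{b},\mathfrak{c}\rangle=[0]$ in the relevant quotient. Putting these two observations together: if $(A^{\bullet,0}(M),\del)$ is formal, then it is connected by a zig-zag of quasi-isomorphisms to some $(\mathcal{B},0)$; Massey products are preserved along this zig-zag; and they vanish on $(\mathcal{B},0)$; hence they vanish on $(A^{\bullet,0}(M),\del)$. By Theorem \ref{thm:deldelj-lemma-formal} the $\del\del_J$-lemma guarantees formality, so the triple $\del$-Massey products vanish, which is the assertion.

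The proof is essentially a formal bookkeeping argument, so there is no serious obstacle; the only point requiring a little care is the treatment of the denominators in the quotient defining $\langle\mathfrak{a},\mathfrak{b},\mathfrak{c}\rangle$, namely checking that a quasi-isomorphism $f$ carries the indeterminacy subspace $H^{p+q-1,0}_\del\cup H^{r,0}_\del+H^{p,0}_\del\cup H^{q+r-1,0}_\del$ isomorphically onto the corresponding subspace on the target -- but this is immediate since $H(f)$ is a ring isomorphism. I would therefore present the argument compactly, stating the invariance of Massey products under quasi-isomorphism as a well-known fact (citing, e.g., the standard references on rational homotopy theory such as \cite{deligne-griffiths-morgan-sullivan}), and then combining it with Theorem \ref{thm:deldelj-lemma-formal}.
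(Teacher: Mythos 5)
Your proposal is correct and follows essentially the same route as the paper: the paper derives the corollary immediately from Theorem \ref{thm:deldelj-lemma-formal} by invoking the standard fact that Massey products vanish on a formal DGA. You simply spell out in more detail the well-known invariance of triple Massey products under zig-zags of quasi-isomorphisms, which the paper leaves implicit.
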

Hence, we have
\begin{theorem}\label{thm:triple_Mass_prod}
Let $(M,I,J,K,\Omega)$ be a compact HKT $\mathrm{SL}(n,\mathbb{H})$-manifold, then the triple $\del$-Massey products vanish.
\end{theorem}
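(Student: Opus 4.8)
The plan is to deduce Theorem \ref{thm:triple_Mass_prod} as an immediate consequence of two facts already established in the excerpt. First, by Verbitsky's theorem \cite{Verbitsky (2007)} (or rather \cite[Theorem 6]{grantcharov-lejmi-verbitsky} as cited above), a compact HKT $\mathrm{SL}(n,\mathbb{H})$-manifold $(M,I,J,K,\Omega)$ satisfies the $\del\del_J$-lemma: the triviality of the canonical bundle together with the HKT condition forces the relevant $\del\del_J$-Lefschetz-type property on $(A^{\bullet,0}(M),\del,\del_J)$. Second, Corollary \ref{cor:triple_Mass_prod} tells us that on any compact hypercomplex manifold satisfying the $\del\del_J$-lemma, all triple $\del$-Massey products vanish.

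So the proof would be just one line: combine \cite[Theorem 6]{grantcharov-lejmi-verbitsky} with Corollary \ref{cor:triple_Mass_prod}. Concretely, given cohomology classes $\mathfrak{a}\in H^{p,0}_\del(M)$, $\mathfrak{b}\in H^{q,0}_\del(M)$, $\mathfrak{c}\in H^{r,0}_\del(M)$ with $\mathfrak{a}\cup\mathfrak{b}=0$ and $\mathfrak{b}\cup\mathfrak{c}=0$, the $\del\del_J$-lemma (applied via Theorem \ref{thm:deldelj-lemma-formal}) shows $(A^{\bullet,0}(M),\del)$ is formal, hence by the standard fact that formality forces vanishing of all (higher) Massey products, $\langle\mathfrak{a},\mathfrak{b},\mathfrak{c}\rangle=0$ in the appropriate quotient.

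There is essentially no obstacle here: the statement is a corollary-of-a-corollary. The only point requiring a word of care is making explicit \emph{why} an HKT $\mathrm{SL}(n,\mathbb{H})$-manifold satisfies the $\del\del_J$-lemma — this is the content invoked from \cite{grantcharov-lejmi-verbitsky} and one should cite it rather than reprove it. One might also remark, for completeness, that the vanishing is compatible with the quotient appearing in the definition of $\langle\cdot,\cdot,\cdot\rangle$, i.e. that the indeterminacy subspace is respected by the DGA-equivalence; but this is automatic from the fact that quasi-isomorphisms of DGAs induce isomorphisms on cohomology intertwining the cup products, so the Massey product of a formal DGA lies in the image of the (zero) Massey product of its cohomology, namely the zero coset.

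Thus the proof reads: by \cite[Theorem 6]{grantcharov-lejmi-verbitsky} a compact HKT $\mathrm{SL}(n,\mathbb{H})$-manifold satisfies the $\del\del_J$-lemma, and the conclusion follows from Corollary \ref{cor:triple_Mass_prod}.
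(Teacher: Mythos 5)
Your proposal is correct and follows exactly the paper's own route: the authors likewise invoke \cite[Theorem 6]{grantcharov-lejmi-verbitsky} to get the $\del\del_J$-lemma on a compact HKT $\mathrm{SL}(n,\mathbb{H})$-manifold, deduce formality of $(A^{\bullet,0}(M),\del)$ via Theorem \ref{thm:deldelj-lemma-formal}, and then conclude by the standard vanishing of Massey products for formal DGAs (Corollary \ref{cor:triple_Mass_prod}). Nothing further is needed.
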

In particular, triple $\del$-Massey products are an obstruction to the existence of a HKT $\mathrm{SL}(n,\mathbb{H})$-structure on a compact hypercomplex manifold.
More precisely,

\begin{cor}\label{cor:non-existence-hkt}
Let $(M,I)$ be a $4n$-dimensional compact complex manifold such that there exists a non trivial $\del$-Massey product, then $(M,I)$ does not admit any complex structures $J,K$ such that $(M,I,J,K)$ is hypercomplex and admits a HKT $\mathrm{SL}(n,\mathbb{H})$-structure.
\end{cor}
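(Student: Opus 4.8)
The plan is to derive this corollary purely formally from Theorem~\ref{thm:triple_Mass_prod} together with Verbitsky's characterization of $\mathrm{SL}(n,\mathbb{H})$-manifolds among hypercomplex manifolds with holomorphically trivial canonical bundle. So first I would observe that the statement is essentially a contrapositive: if $(M,I)$ carried complex structures $J,K$ making $(M,I,J,K)$ hypercomplex and admitting an HKT $\mathrm{SL}(n,\mathbb{H})$-structure $(I,J,K,\Omega)$, then by Theorem~\ref{thm:triple_Mass_prod} all triple $\del$-Massey products on $(A^{\bullet,0}(M),\del)$ would vanish, contradicting the hypothesis that a non-trivial one exists. Hence no such $J,K,\Omega$ can exist.

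The one subtlety I would want to address explicitly is the role of the hypothesis "holomorphically trivial canonical bundle'' appearing in the statement of Corollary~\ref{Cor:Massey} (as opposed to the bare version Corollary~\ref{cor:non-existence-hkt}). The point is that the $\del$-Massey products are invariants of the complex manifold $(M,I)$ alone: the complex $(A^{\bullet,0}(M),\del)$ is the conjugate Dolbeault complex of $(M,I)$, so the phrase "there exists a non-trivial $\del$-Massey product'' makes sense without reference to any hypercomplex data. Now recall (Verbitsky, \cite{Verbitsky (2007)}) that a compact hypercomplex manifold equipped with an HKT metric has holonomy of the Obata connection in $\mathrm{SL}(n,\mathbb{H})$ if and only if its canonical bundle is holomorphically trivial. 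Therefore, if $(M,I)$ does \emph{not} have holomorphically trivial canonical bundle, it trivially admits no HKT $\mathrm{SL}(n,\mathbb{H})$-structure and there is nothing to prove; the interesting content is exactly the case where $K_{(M,I)}$ is holomorphically trivial, which is why that hypothesis is recorded in Corollary~\ref{Cor:Massey}. In writing the proof I would simply note this and then invoke Theorem~\ref{thm:triple_Mass_prod}.

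\begin{proof}
Suppose, for contradiction, that there exist complex structures $J,K$ on $M$ such that $(M,I,J,K)$ is hypercomplex and admits an HKT metric with associated $(2,0)$-form $\Omega$. Since by hypothesis the canonical bundle of $(M,I)$ is holomorphically trivial, by \cite{Verbitsky (2007)} the holonomy of the Obata connection is contained in $\mathrm{SL}(n,\mathbb{H})$, so $(M,I,J,K,\Omega)$ is a compact HKT $\mathrm{SL}(n,\mathbb{H})$-manifold. By Theorem~\ref{thm:triple_Mass_prod} all triple $\del$-Massey products of the DGA $(A^{\bullet,0}(M),\del)$ vanish. This contradicts the assumption that there is a non-trivial $\del$-Massey product on $(M,I)$, since such a product depends only on the complex structure $I$. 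Hence no such $J,K$ exist.
\end{proof}

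There is no real obstacle here: the proof is a one-line contrapositive once Theorem~\ref{thm:triple_Mass_prod} is in hand. The only thing requiring a moment's care is making sure the reader understands that the $\del$-Massey products are well-defined data of $(M,I)$ by itself, and that the canonical-bundle hypothesis is precisely what bridges "HKT structure'' and "HKT $\mathrm{SL}(n,\mathbb{H})$-structure'' via Verbitsky's theorem; if one prefers, one may drop that hypothesis entirely and restate the conclusion as "does not admit any HKT $\mathrm{SL}(n,\mathbb{H})$-structure'', which is Corollary~\ref{cor:non-existence-hkt} and needs nothing beyond Theorem~\ref{thm:triple_Mass_prod}.
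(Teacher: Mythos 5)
Your proposal is correct and matches the paper's (implicit) argument exactly: the corollary is stated in the paper with no separate proof precisely because it is the contrapositive of Theorem~\ref{thm:triple_Mass_prod}, and your discussion of why the canonical-bundle hypothesis appears only in the introduction's variant (Corollary~\ref{Cor:Massey}) via Verbitsky's equivalence is also accurate. The only cosmetic point is that your proof block assumes holomorphic triviality of $K_{(M,I)}$, which is not a hypothesis of the statement as posed here, but you correctly note at the end that the bare version needs nothing beyond Theorem~\ref{thm:triple_Mass_prod}.
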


Notice that, in fact, by \cite{barberis-dotti-verbitsky} if a nilmanifold $N$ admits an invariant HKT structure $(I,J,K,\Omega)$ then the complex structures $I,J,K$ are abelian
and in such a case the triple $\partial$-Massey products are trivial. Indeed, we prove in general the following

\begin{theorem}\label{thm:vanishing-massey}
Let $N=\Gamma\backslash G$ be a $2n$-dimensional nilmanifold and let $I$ be an invariant abelian complex structure on $N$. Then, the triple $\partial$-Massey products are all zero.
\end{theorem}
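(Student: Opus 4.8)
The plan is to work at the level of the Lie algebra $\mathfrak{g}$ of $G$, using Nomizu-type reasoning: since $I$ is invariant, the Dolbeault complex of invariant forms $(\bigwedge^{\bullet,0}\mathfrak{g}^*, \partial)$ computes (a piece of) the cohomology, and an abelian complex structure means that $\mathfrak{g}^{1,0}$ is an abelian subalgebra of $\mathfrak{g}^{\mathbb{C}}$, equivalently $[X,Y]\in\mathfrak{g}^{0,1}$ for all $X,Y\in\mathfrak{g}^{1,0}$ (the usual characterization $[\mathfrak{g}^{1,0},\mathfrak{g}^{1,0}]=0$ up to conventions). First I would record what this abelianness says about $\partial$ on invariant $(1,0)$-forms: for $\alpha\in\mathfrak{g}^{*1,0}$, the component of $d\alpha$ of type $(2,0)$ is the obstruction, and abelianness of $\mathfrak{g}^{1,0}$ forces $d\alpha$ to have no $(2,0)$-part, so $\partial\alpha = 0$ on generators. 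Since $\bigwedge^{\bullet,0}\mathfrak{g}^*$ is generated as an algebra by $\mathfrak{g}^{*1,0}$ and $\partial$ is a derivation, this gives $\partial \equiv 0$ on the whole invariant complex $\bigwedge^{\bullet,0}\mathfrak{g}^*$.

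Once $\partial$ vanishes on invariant forms, the DGA $(\bigwedge^{\bullet,0}\mathfrak{g}^*, \partial) = (\bigwedge^{\bullet,0}\mathfrak{g}^*, 0)$ has zero differential, so it is trivially formal, and in particular every triple Massey product computed inside this sub-DGA vanishes for the cheapest possible reason (the defining forms $\lambda,\mu$ can be taken to be zero, since $\alpha\wedge\beta$ and $\beta\wedge\gamma$ are already zero as forms, not merely as cohomology classes — wait, they need only be $\partial$-exact, and here $\partial=0$ so $\partial$-exact means zero; but $\alpha\wedge\beta$ lands in $\bigwedge^{p+q,0}$ and $\partial(\alpha\wedge\beta)=0$ automatically, so the hypothesis $\mathfrak{a}\cup\mathfrak{b}=0$ must be added, which is the standard setup). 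The remaining point is the transition from invariant forms to all forms: I would invoke the analogue of Nomizu's theorem in this quaternionic Dolbeault setting — namely that for a nilmanifold with invariant complex structure the inclusion $(\bigwedge^{\bullet,0}\mathfrak{g}^*, \partial)\hookrightarrow (A^{\bullet,0}(N),\partial)$ is a quasi-isomorphism of DGAs — which is exactly the kind of statement established in the references (e.g. the framework of \cite{barberis-dotti-verbitsky}, or a Console–Fino / Sakane-type argument). A quasi-isomorphism of DGAs induces a bijection on Massey products, so vanishing on the invariant model transfers to $(A^{\bullet,0}(N),\partial)$.

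Concretely, the steps in order: (1) fix a basis of $\mathfrak{g}^{*1,0}$ and translate "abelian complex structure" into $\partial|_{\mathfrak{g}^{*1,0}}=0$; (2) conclude $\partial=0$ on all of $\bigwedge^{\bullet,0}\mathfrak{g}^*$ by the derivation property; (3) cite/establish that invariant forms compute $H^{\bullet,0}_\partial(N)$ and that the inclusion is a DGA quasi-isomorphism; (4) conclude $(A^{\bullet,0}(N),\partial)$ is formal, hence (by Corollary \ref{cor:triple_Mass_prod}, or directly) all triple $\partial$-Massey products vanish. Alternatively, for step (4) one can argue even more directly: given classes $\mathfrak{a},\mathfrak{b},\mathfrak{c}$ with $\mathfrak{a}\cup\mathfrak{b}=0=\mathfrak{b}\cup\mathfrak{c}$, represent them by invariant forms, note $\partial(\text{anything invariant})=0$ so one may take $\lambda=\mu=0$ in the defining data, whence $\langle\mathfrak{a},\mathfrak{b},\mathfrak{c}\rangle=[0]=0$.

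The main obstacle is step (3): one needs the quaternionic-Dolbeault analogue of Nomizu's theorem, i.e. that for an invariant (not necessarily abelian, but certainly in the abelian case) complex structure on a nilmanifold the invariant forms $\bigwedge^{\bullet,0}\mathfrak{g}^*$ compute $H^{\bullet,0}_\partial$. In the abelian case this is comparatively clean — $\partial=0$ on invariant forms makes the invariant cohomology just $\bigwedge^{\bullet,0}\mathfrak{g}^*$ itself — but one still must justify that this is \emph{all} the $\partial$-cohomology of $(A^{\bullet,0}(N),\partial)$; this is where I would lean on the established structure theory (the operator $\partial$ here is, up to the $J$-twist, a Dolbeault-type operator, and the nilmanifold has an invariant-forms retraction). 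If a clean citation is unavailable one would reprove it via a symmetrization/averaging argument over the compact quotient combined with an explicit homotopy, but since the statement is flagged as "in general" and placed right after a reference to \cite{barberis-dotti-verbitsky}, I expect the intended proof simply cites the appropriate minimal-model / Nomizu-type result and then runs the short argument above.
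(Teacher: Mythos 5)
Your proposal is correct and follows essentially the same route as the paper: translate abelianness into $\partial=0$ on invariant $(r,0)$-forms, invoke Console--Fino to see that invariant forms compute $H^{\bullet,0}_{\partial}(N)$, and then observe that the defining data $\lambda,\mu$ of any triple $\partial$-Massey product can be taken to vanish. The paper runs exactly your ``direct'' version of step (4) rather than the formality detour, and handles your flagged obstacle (3) precisely by the citation you anticipated.
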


\begin{proof}
Since $I$ is an invariant abelian complex structure on $N$, there exists a co-frame of invariant $(1,0)$-forms $\left\lbrace\varphi^i\right\rbrace_{i=1,\ldots,n}$  on $(N,I)$ such that
$$
\partial\varphi^i=0, \quad\text{for }i=1,\ldots,n\,.
$$
Since $I$ is abelian, by \cite{console-fino} the Dolbeault cohomology of $N$ can be computed using only invariant forms, hence
$$
H^{r,0}_\partial(N)\simeq H^{r,0}_\partial(\g^\mathbb{C})=\left\langle\varphi^{i_1}\wedge\ldots\wedge\varphi^{i_r}\right\rangle_{1\leq i_1<\ldots<i_r\leq n}
$$
for $r=1,\ldots,n$, where, denoting with $\g=\text{Lie}(G)$, $H^{\bullet,\bullet}_\partial(\g^\mathbb{C})$ denotes the cohomology of the differential bigraded algebra $\Lambda^{\bullet,\bullet}(\g^\mathbb{C})^*$ with respect to the operator $\del$.\\
In order to construct a triple $\partial$-Massey product let $\mathfrak{a}=\left[\alpha\right]\in H^{p,0}_\del(N)$,
$\mathfrak{b}=\left[\beta\right]\in H^{q,0}_\del(N)$  such that
$\mathfrak{a} \cup \mathfrak{b}=0\in H^{p+q,0}_\del(N)$, hence $\mathfrak{a} \cup \mathfrak{b}=0\in H^{p+q,0}_\partial(\g^\mathbb{C})$, namely there exists an invariant $(p+q-1,0)$-form $\lambda$ such that
$$
\alpha\wedge\beta=\del\lambda.
$$
But, on invariant $(r,0)$-forms the operator $\partial$ vanishes and so we can take the primitive $\lambda=0$ itself. A similar conclusion is obtained taking the third class in the definition of $\del$-Massey products.
This means that we cannot construct non trivial $\del$-Massey products since both $\lambda$ and $\mu$ in the definition of $\del$-Massey products would be zero.
\end{proof}

An immediate consequence of this result combined with \cite[Theorem 4.6]{barberis-dotti-verbitsky}  is the following
\begin{theorem}\label{thm:vanishing-massey-nilmanifold}
Let $N=\Gamma\backslash G$ be a $4n$-dimensional nilmanifold and let $(I,J,K,\Omega)$ be an invariant HKT structure on $N$. Then, the triple $\partial$-Massey products are all zero.
\end{theorem}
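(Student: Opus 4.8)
The plan is to deduce this from Theorem \ref{thm:vanishing-massey} together with the structural result of Barberis--Dotti--Verbitsky. First I would invoke \cite[Theorem 4.6]{barberis-dotti-verbitsky}, which asserts that any invariant HKT structure $(I,J,K,\Omega)$ on a nilmanifold $N=\Gamma\backslash G$ forces each of the complex structures $I,J,K$ to be abelian. This is the only genuinely nontrivial input, and it is precisely why the HKT hypothesis enters.

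Next, having an invariant \emph{abelian} complex structure $I$ on the $4n$-dimensional nilmanifold $N$, I would simply apply Theorem \ref{thm:vanishing-massey}, whose statement is for a nilmanifold of arbitrary even real dimension (here $4n$): it yields at once that all triple $\partial$-Massey products of $(A^{\bullet,0}(N),\partial)$ vanish. Concretely, one uses \cite{console-fino} to compute $H^{\bullet,0}_\partial(N)$ via invariant forms, together with the fact that $\partial$ annihilates invariant $(r,0)$-forms when $I$ is abelian; hence in the definition of a triple $\partial$-Massey product the products $\alpha\wedge\beta$ and $\beta\wedge\gamma$ of invariant representatives are already exact with primitives that can be taken to be $0$, so $\lambda=\mu=0$ and the product is trivial.

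The main obstacle, such as it is, lies entirely in the first step: one must be sure that the abelian-ness conclusion of \cite[Theorem 4.6]{barberis-dotti-verbitsky} applies to \emph{every} invariant HKT structure on a nilmanifold, not merely in special cases. Once that is in hand, the statement is a one-line invocation of the preceding theorem, with no new cohomological computation required beyond what was already carried out for Theorem \ref{thm:vanishing-massey}.
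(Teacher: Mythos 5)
Your proposal matches the paper's argument exactly: the paper derives this theorem as an immediate consequence of Theorem \ref{thm:vanishing-massey} combined with \cite[Theorem 4.6]{barberis-dotti-verbitsky}, which guarantees that the complex structures underlying an invariant HKT structure on a nilmanifold are abelian. Both the key input and the reduction to the abelian case are the same as in the paper, so there is nothing to add.
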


Therefore, a relevant application of Corollary \ref{cor:non-existence-hkt} can be given on solvmanifolds.

\begin{ex}
Consider the $8$-dimensional almost abelian Lie algebra $\g$ with structure equations
$$
[e_8,e_2]=e_4,\quad [e_8,e_3]=e_5.
$$
Let $G$ be the associated solvable simply connected Lie group. Then, by \cite{bock} $G$ admits a lattice $\Gamma$ such that $S:=\Gamma\backslash G$ is a solvmanifold.
Define the complex structure setting as global co-frame of $(1,0)$-forms
$$
\varphi^{1}=e^1+ie^8,\quad
\varphi^{2}=e^2+ie^3,\quad
\varphi^{3}=e^4+ie^5,\quad
\varphi^{4}=e^6+ie^7.
$$
The complex structure equations become
$$
d\varphi^1=d\varphi^2=d\varphi^4=0,\quad
d\varphi^3=\frac{i}{2}\varphi^{12}+\frac{i}{2}\varphi^{2\bar1}.
$$
Note that the form $ \varphi^{1234} $ is closed, so $ S $ has holomorphically trivial canonical bundle.
We now construct a non trivial triple $\del$-Massey product. Take
$[\varphi^1]\in H^{1,0}_\del(S)$,
$[\varphi^2]\in H^{1,0}_\del(S)$ and
$[\varphi^2]\in H^{1,0}_\del(S)$.
Notice that
$\varphi^1\wedge\varphi^2=\del(-2i\,\varphi^3)$ and $\varphi^2\wedge\varphi^2=0$. Hence, the $\del$-Massey product is given by
$$
[-2i\,\varphi^{3}\wedge\varphi^2]\in
 \frac{H^{2,0}_{\del}(S)}
{[\varphi^1]\cup H^{1,0}_{\del}(S)
+H^{1,0}_{\del}(S)\cup [\varphi^2]}.
$$
and this class is clearly non trivial. Therefore, by Corollary \ref{cor:non-existence-hkt} the complex manifold $(S,I)$ does not admit any complex structures $J,K$ such that the solvmanifold $(S,I,J,K)$ is hypercomplex and admits a $\mathrm{SL}(2,\mathbb{H})$ HKT structure. 
\end{ex}

The next two examples show that the converse of Corollary \ref{cor:triple_Mass_prod} (and hence Theorem \ref{thm:triple_Mass_prod}) does not hold in general. The first one is a compact HKT manifold which is not $ \mathrm{SL}(n,\mathbb{H}) $, while the second one is $ \mathrm{SL}(n,\mathbb{H}) $ but does not admit any HKT metric.

\begin{ex}
	Consider $ \mathrm{SU}(3) $ equipped with a homogeneous hypercomplex structure  $ (I,J,K) $ as constructed in \cite{Joyce,SSTV}. There is an HKT metric on $ \mathrm{SU}(3) $ compatible with this hypercomplex structure \cite{Grantcharov-Poon,Opfermann-Papadopoulos}. By \cite{Soldatenkov} the holonomy of the Obata connection on $ \mathrm{SU}(3) $ is $ \mathrm{GL}(2,\mathbb{H}) $ and, in fact, we claim that the $ \partial \partial_J $-lemma cannot hold on $ \mathrm{SU}(3) $.
	
	To see this, we observe that from \cite{GV}, there exists a unitary co-frame $ \{Z^1,\dots,Z^4\} $  of $ (1,0) $-forms (with respect to $ I $) on the Lie algebra of $ \mathrm{SU}(3) $ such that the HKT form is
	\[
	\Omega=Z^{12}+Z^{34}=\frac{1}{2}\partial Z^2.
	\]
	Now, if the $ \partial \partial_J $-lemma hold we would have that $ \Omega=\partial \partial_Jf $ for some function $ f $, but since the HKT form is q-positive, by E. Hopf's maximum principle $ f $ would be constant and thus $ \Omega=0 $ which is a contradiction. 
	
	On the other hand the triple $ \partial $-Massey products are all zero because the same coframe satisfies
	\[
	\partial Z^1=0,\qquad \partial Z^2=2Z^{12}+2Z^{34}, \qquad \partial Z^3=(1+3i)Z^{13},\qquad \partial Z^4=(1-3i)Z^{14},
	\]
	which shows that $ H^{1,0}_\partial (M)\simeq \langle Z^1 \rangle $ and $ H^{i,0}_\partial(M)=0 $ for $ i>1 $.

\end{ex}

\begin{ex}
Consider the nilmanifold $M=\Gamma\backslash G$ whose structure equations of the Lie algebra $\mathfrak{g}$ of $G$ are given by (see \cite[Example 1]{lejmi-weber})
$$
de^1=de^2=de^3=de^4=de^5=0,\quad
de^6=e^{12}+e^{34},\quad
de^7=e^{13}-e^{24},\quad
de^8=e^{14}+e^{23},
$$
where we use the standard notation $e^{ij}=e^i\wedge e^j$.
Define the following hypercomplex structure
$$
Ie^1=e^2,\quad Ie^3=e^4,\quad
Ie^5=e^6,\quad Ie^7=e^8,
$$
$$
Je^1=e^3, \quad Je^2=-e^4, \quad
Je^5=e^7, \quad Je^6=-e^8.
$$
Then a co-frame for invariant $(1,0)$-forms with respect to $ I $ on $M$ is given by
$$
\varphi^{1}=e^1-ie^2,\quad
\varphi^{2}=e^3-ie^4,\quad
\varphi^{3}=e^5-ie^6,\quad
\varphi^{4}=e^7-ie^8
$$
and the complex structure equations become
$$
d\varphi^1=d\varphi^2=0,\quad
d\varphi^3=-\frac{1}{2}(\varphi^{1\bar1}+\varphi^{2\bar2}),\quad
d\varphi^4=\varphi^{12}.
$$
Since the hypercomplex structure is not abelian it does not admit any compatible HKT metric (see also \cite{lejmi-weber}). The conjugate Dolbeault cohomology in bidegree $(p,0)$ is given by
$$
H^{1,0}_{\del}(M)\simeq\left\langle \varphi^{1}, \varphi^{2}, \varphi^{3}\right\rangle\,,\qquad H^{2,0}_{\del}(M)\simeq\left\langle \varphi^{13}, \varphi^{23}, \varphi^{14},\varphi^{24}\right\rangle\,,
$$
We now construct a non trivial triple $\del$-Massey product. Take
$[\varphi^1]\in H^{1,0}_\del(M)$,
$[\varphi^2]\in H^{1,0}_\del(M)$ and
$[\varphi^2]\in H^{1,0}_\del(M)$.
Notice that
$\varphi^1\wedge\varphi^2=\del\varphi^4$ and $\varphi^2\wedge\varphi^2=0$. Hence, the $\del$-Massey product is given by
$$
[\varphi^{4}\wedge\varphi^2]\in
 \frac{H^{2,0}_{\del}(M)}
{[\varphi^1]\cup H^{1,0}_{\del}(M)
+H^{1,0}_{\del}(M)\cup [\varphi^2]}.
$$
and this class is clearly non trivial.
\end{ex}

\section{Balanced HKT solvmanifolds}\label{Sec:solv}

The following result can be seen as a generalization of \cite[Proposition 4.11]{barberis-dotti-verbitsky} where it is proven that for a hyperhermitian nilmanifold with abelian hypercomplex structure the metric is balanced. Notice that in the nilpotent case the HKT assumption is automatic due to \cite{dotti-fino-2}.

\begin{theorem}\label{thm:abelian-then-balanced}
Let $(\Gamma\backslash G,I,J,K)$ be a $4n$-dimensional solvmanifold with an invariant abelian hypercomplex structure. Then, every invariant hyperhermitian metric $g$ is balanced.
\end{theorem}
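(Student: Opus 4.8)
The plan is to reduce the statement to the Lie algebra $\mathfrak{g}$ of $G$, where it becomes a short computation governed by two facts: that $\mathfrak{g}$ is unimodular, since $G$ admits a cocompact lattice, and that abelianness forces every invariant hyperhermitian metric to be HKT, so that Verbitsky's balancedness criterion is available. Fix a coframe $\{\varphi^1,\dots,\varphi^{2n}\}$ of invariant $(1,0)$-forms for $I$, with dual frame $\{Z_1,\dots,Z_{2n}\}$ of $\mathfrak{g}^{1,0}$. Since the hypercomplex structure is abelian, in particular $\mathfrak{g}^{1,0}$ is an abelian subalgebra of $\mathfrak{g}^{\mathbb{C}}$, equivalently $d\varphi^j\in A^{1,1}(M)$ for all $j$; in particular $\del$ annihilates every invariant $(2,0)$-form, so every invariant hyperhermitian $g$ is HKT (as in \cite{dotti-fino-2}), and moreover $\del\bar\varphi^j=d\bar\varphi^j$ because $d\bar\varphi^j$ has no $(0,2)$-component. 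Writing the structure equations as $d\bar\varphi^j=\sum_{k,l}b^j_{k\bar l}\,\varphi^k\wedge\bar\varphi^l$, the formula $d\alpha(X,Y)=-\alpha([X,Y])$ for invariant forms gives $b^j_{k\bar l}=-\bar\varphi^j([Z_k,\bar Z_l])$.

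By the balancedness criterion for compact HKT manifolds recalled in the Introduction, $(M,I,g)$ is balanced if and only if $\del\bar\Omega^n=0$. Here $\Omega$ is a non-degenerate invariant $(2,0)$-form, hence $\bar\Omega^n=c\,\bar\varphi^1\wedge\cdots\wedge\bar\varphi^{2n}$ for some constant $c\neq0$, and a direct computation using $\del\bar\varphi^j=d\bar\varphi^j\in A^{1,1}(M)$ — in which, inside each summand $\del\bar\varphi^j\wedge\bar\varphi^1\wedge\cdots\wedge\bar\varphi^{2n}$ with $\bar\varphi^j$ deleted, only the $\varphi^k\wedge\bar\varphi^j$-part of $d\bar\varphi^j$ survives — gives
\[
\del\bar\Omega^n \;=\; c\Big(\sum_{k}\Big(\sum_{j} b^j_{k\bar j}\Big)\,\varphi^k\Big)\wedge\bar\varphi^1\wedge\cdots\wedge\bar\varphi^{2n}.
\]
Thus it suffices to show that $\sum_{j}b^j_{k\bar j}=0$ for every $k$.

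For the last step I would identify this sum with a trace. Since $\mathfrak{g}^{1,0}$ is abelian, $\mathrm{ad}_{Z_k}$ kills $\mathfrak{g}^{1,0}$, so computing the trace of $\mathrm{ad}_{Z_k}$ on $\mathfrak{g}^{\mathbb{C}}$ in the basis $\{Z_1,\dots,Z_{2n},\bar Z_1,\dots,\bar Z_{2n}\}$ yields
\[
\mathrm{tr}\,(\mathrm{ad}_{Z_k}) \;=\; \sum_{j}\bar\varphi^j([Z_k,\bar Z_j]) \;=\; -\sum_{j} b^j_{k\bar j}.
\]
On the other hand $\mathfrak{g}$ is unimodular, because $G$ admits a lattice with compact quotient, so $\mathrm{tr}\,(\mathrm{ad}_X)=0$ for every $X\in\mathfrak{g}$ and hence, by $\mathbb{C}$-linearity, for every $X\in\mathfrak{g}^{\mathbb{C}}$; in particular $\mathrm{tr}\,(\mathrm{ad}_{Z_k})=0$. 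Therefore $\sum_{j}b^j_{k\bar j}=0$ for all $k$, so $\del\bar\Omega^n=0$ and $g$ is balanced. The only genuinely non-routine point is this last one: in the nilpotent setting of \cite{barberis-dotti-verbitsky} the operators $\mathrm{ad}_X$ are nilpotent and hence traceless for free, whereas for a general solvmanifold one must invoke unimodularity — which is exactly what compactness of $\Gamma\backslash G$ guarantees — while the rest is bookkeeping with the structure equations.
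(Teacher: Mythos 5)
Your proof is correct, but it takes a genuinely different route from the paper's. The paper also starts from the Dotti--Fino observation that abelianness forces every invariant hyperhermitian metric to be HKT, but then works with the Bismut connection: since $G$ is unimodular, \cite[Lemma 2.4]{barberis-dotti-verbitsky} expresses the Lee form as $\tau_J(X)=\mathrm{tr}\bigl(\tfrac12 J\nabla^B_{JX}\bigr)$, and a pointwise cancellation in a quaternionic orthonormal frame, using $\nabla^B I=\nabla^B J=\nabla^B K=0$ and the quaternionic Hermitian property of $g$, shows this trace vanishes (and similarly for $I$ and $K$). You instead invoke Verbitsky's criterion ``balanced $\iff\partial\bar\Omega^n=0$'' and compute $\partial\bar\Omega^n$ directly in structure constants, reducing everything to $\mathrm{tr}(\mathrm{ad}_{Z_k})=0$; your sign bookkeeping checks out (the two factors of $(-1)^{j-1}$ cancel, so the inner sum over $j$ is unweighted), and the identification of $\sum_j b^j_{k\bar j}$ with $-\mathrm{tr}(\mathrm{ad}_{Z_k})$ is exactly where abelianness (killing the $\mathfrak{g}^{1,0}$-half of the trace) and unimodularity (killing the whole trace) enter. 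What your approach buys is self-containedness and transparency: no Bismut connection and no imported Lee-form formula, and the roles of the two hypotheses are laid bare. What the paper's approach buys is that it yields $\tau_J=0$ (and $\tau_I=\tau_K=0$) directly; your argument as written gives balancedness of $(M,I,g)$, which suffices since on an HKT manifold the three Lee forms coincide — or, more simply, you can run the identical computation with $J$ and $K$ in place of $I$, as these are abelian too. One cosmetic point: the reason $\partial\bar\varphi^j=d\bar\varphi^j$ is that $d\bar\varphi^j$ has neither a $(0,2)$- nor a $(2,0)$-component (the former by abelianness, the latter by integrability); you state only the first, though your earlier assertion $d\varphi^j\in A^{1,1}(M)$ already contains both.
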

\begin{proof}
Let $ g $ be an invariant hyperhermitian metric on $ \Gamma \backslash G $, then by \cite{dotti-fino-2} $ g $ is HKT.
We will denote with $(I,J,K,\Omega,g)$ the induced structure on $G$.
Since $\Omega$ is HKT the Bismut connections associated to $I,\,J,\,K$ coincide and we will denote them uniquely with $\nabla^B$.
Since $\Gamma\backslash G$ is a solvmanifold then $G$ is unimodular. Hence, 
by \cite[Lemma 2.4]{barberis-dotti-verbitsky} the Lee form $\tau_J$  associated to $(J,g)$ is given by
$$
\tau_J(X)=\text{tr}\left(\frac{1}{2}J\nabla^B_{JX}\right)
$$
for any $X\in\mathfrak{g}$.
Now we argue as in the proof of \cite[Proposition 4.11]{barberis-dotti-verbitsky} to show that $\tau_J=0$ and so $g$ is balanced with respect to $J$.
The argument is similar for $I$ and $K$.\\
Let $X_1,\,IX_1,\,JX_1,\,KX_1,\,\cdots,\,X_n,\,IX_n,\,JX_n,\,KX_n$ be an orthonormal basis of $\mathfrak{g}$. Now using that $\nabla^B$ preserves $I,\,J,\,K$ and that $g$ is quaternionic Hermitian we have
$$
\begin{aligned}
\text{tr}\left(J\nabla^B_{JX}\right)
&=
\sum_{j=1}^ng(J\nabla^B_{JX}X_j,X_j)+
\sum_{j=1}^ng(J\nabla^B_{JX}IX_j,IX_j)\\
&\quad
+\sum_{j=1}^ng(J\nabla^B_{JX}JX_j,JX_j)+
\sum_{j=1}^ng(J\nabla^B_{JX}KX_j,KX_j)\\
&=\sum_{j=1}^ng(J\nabla^B_{JX}X_j,X_j)+
\sum_{j=1}^ng(JI\nabla^B_{JX}X_j,IX_j)\\
&\quad
+\sum_{j=1}^ng(\nabla^B_{JX}JX_j,X_j)+
\sum_{j=1}^ng(JK\nabla^B_{JX}X_j,KX_j)\\
&=
\sum_{j=1}^ng(J\nabla^B_{JX}X_j,X_j)-
\sum_{j=1}^ng(IJ\nabla^B_{JX}X_j,IX_j)\\
&\quad
+\sum_{j=1}^ng(\nabla^B_{JX}JX_j,X_j)-
\sum_{j=1}^ng(KJ\nabla^B_{JX}X_j,KX_j)\\
&=
\sum_{j=1}^ng(\nabla^B_{JX}JX_j,X_j)-
\sum_{j=1}^ng(\nabla^B_{JX}JX_j,X_j)\\
&\quad
+\sum_{j=1}^ng(\nabla^B_{JX}JX_j,X_j)-
\sum_{j=1}^ng(\nabla^B_{JX}JX_j,X_j)=0.
\end{aligned}
$$
\end{proof}

\begin{cor}
Let $(\Gamma\backslash G,I,J,K)$ be a $4n$-dimensional solvmanifold with an invariant abelian hypercomplex structure. Suppose that there exists an HKT structure $\Omega$ on $(\Gamma\backslash G,I,J,K)$.
Then, there exists a balanced abelian HKT structure on $\Gamma\backslash G$.
\end{cor}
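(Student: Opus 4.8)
The plan is to reduce the statement to Theorem \ref{thm:abelian-then-balanced} by producing an \emph{invariant} HKT form out of the (possibly non-invariant) given one $\Omega$ via symmetrization. Since $\Gamma\backslash G$ is compact, $G$ is unimodular, so there is a $G$-invariant probability measure $\nu$ on $\Gamma\backslash G$ (for the transitive right $G$-action) and the symmetrization map
\[
\mu\colon A^\bullet(\Gamma\backslash G)\longrightarrow \Lambda^\bullet\g^*\,,\qquad
\mu(\alpha)(X_1,\dots,X_k)=\int_{\Gamma\backslash G}\alpha\big((X_1)_m,\dots,(X_k)_m\big)\,d\nu(m)\,,
\]
is well defined (here the $X_i\in\g$ are identified with the associated invariant vector fields). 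It is the identity on invariant forms and commutes with $d$, the $G$-action being by $\nu$-preserving diffeomorphisms.

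First I would record that $\mu$ is compatible with the hypercomplex data. Since $I$ is invariant, $\mu$ preserves the $(p,q)$-decomposition with respect to $I$, hence commutes with $\del$ and $\delbar$; since $J$ is invariant, $\mu$ commutes with the operator $\alpha\mapsto J\alpha$ on forms, and it obviously commutes with complex conjugation. Setting $\Omega':=\mu(\Omega)$ we thus obtain an invariant $(2,0)$-form with $\del\Omega'=\mu(\del\Omega)=0$ and $J\Omega'=\mu(J\Omega)=\mu(\ov\Omega)=\ov{\Omega'}$, i.e. $\Omega'$ is $\del$-closed and q-real.

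The only point needing a genuine (though short) argument is q-positivity of $\Omega'$. Identifying $Z\in\g^{1,0}$ with the associated invariant $(1,0)$-vector field, at each $m\in\Gamma\backslash G$ the vector $Z_m$ is a non-zero $(1,0)$-vector and $(J\ov Z)_m=J_m\ov{Z_m}$ by invariance of $J$; hence q-positivity of $\Omega$ at every point gives
\[
\Omega'(Z,J\ov Z)=\int_{\Gamma\backslash G}\Omega_m\big(Z_m,J_m\ov{Z_m}\big)\,d\nu(m)>0
\]
for all $Z\neq0$. Thus $\Omega'$ is q-positive, in particular non-degenerate, so $(I,J,K,\Omega')$ is an invariant HKT structure on $\Gamma\backslash G$; the hypercomplex structure $(I,J,K)$ is unchanged, hence still abelian.

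Finally, the hyperhermitian metric $g'$ induced by $\Omega'$ is invariant, being built from the invariant data $\Omega'$ and $J$, so Theorem \ref{thm:abelian-then-balanced} applies and yields that $g'$ is balanced. Therefore $(I,J,K,\Omega')$ is a balanced abelian HKT structure on $\Gamma\backslash G$, proving the corollary. The only mildly delicate steps are the compatibility of $\mu$ with $\del$ and with $J$ and the preservation of q-positivity under averaging; once the hypercomplex structure is assumed invariant these are routine, so the argument presents no real obstacle.
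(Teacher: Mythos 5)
Your argument is correct and follows essentially the same route as the paper: the paper's proof simply cites Fino--Grantcharov \cite{fino-grantcharov} for the existence of an invariant HKT structure on $(\Gamma\backslash G,I,J,K)$ and then applies Theorem \ref{thm:abelian-then-balanced}. Your explicit Belgun-type symmetrization of $\Omega$ is precisely the standard proof of that cited averaging step, so you have merely unpacked the citation rather than taken a different path.
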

\begin{proof}
By \cite{fino-grantcharov} there exists a invariant HKT structure $\tilde\Omega$ on $(\Gamma\backslash G,I,J,K)$. Now, by Theorem \ref{thm:abelian-then-balanced} we have that $\tilde\Omega$ is balanced.
\end{proof}

\begin{rem}
Notice that, differently from the nilpotent case (cf. \cite{barberis-dotti-verbitsky}), the converse of Theorem \ref{thm:abelian-then-balanced} is not true. Indeed, in \cite{barberis-fino} it is provided an example of a balanced HKT solvmanifold with an hypercomplex structure that is not abelian.
\end{rem}

In \cite{Verbitsky (2007)} Verbitsky showed that an $ \mathrm{SL}(n,\mathbb{H}) $-manifold has holomorphically trivial canonical bundle. Andrada and Tolcachier \cite{Andrada-Tolcachier} found a counterexample to the converse exhibiting an hypercomplex solvmanifold that is not $ \mathrm{SL}(n,\mathbb{H})$ but admits a non-invariant holomorphic section of the canonical bundle. We indeed now show that the $ \mathrm{SL}(n,\mathbb{H}) $ condition is equivalent to the existence of an invariant holomorphic section of the canonical bundle.

\begin{theorem}\label{thm:solv_SLnH}
	Let $(M:=\Gamma\backslash G,I,J,K,g)$ be a hypercomplex solvmanifold, then the holonomy of the Obata connection $ \nabla $ is contained in $ \mathrm{SL}(n,\mathbb{H}) $ if and only if the canonical bundle admits an invariant holomorphic section.
\end{theorem}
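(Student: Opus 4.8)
The plan is to reduce both implications to a single observation about the (essentially unique) invariant nowhere-vanishing $(2n,0)$-form. Since the hypercomplex structure on $M=\Gamma\backslash G$ is invariant, the Obata connection $\nabla$ is invariant as well, and the $1$-dimensional space of invariant $(2n,0)$-forms furnishes a global invariant frame $\Psi$ of the canonical bundle $K_M=\Lambda^{2n,0}_I M$; since $\alpha\mapsto J\bar\alpha$ is an antilinear involution of this complex line ($J$ being a real operator with $J^2=1$ on $2n$-forms), it has a nonzero fixed vector, so after rescaling we may assume $\Psi$ is q-real, $J\Psi=\bar\Psi$. Write $\nabla\Psi=\theta\otimes\Psi$ for an invariant $\C$-valued $1$-form $\theta$. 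Two elementary remarks will be used. First, $\nabla$ being torsion-free and $\partial\Psi\in A^{2n+1,0}(M)=0$, antisymmetrization of $\nabla\Psi$ (which coincides with $d\Psi$) gives $\overline\partial\Psi=d\Psi=\theta^{0,1}\wedge\Psi$; as $\Psi$ is nowhere-vanishing of top holomorphic degree, wedging with $\Psi$ is pointwise injective on $(0,1)$-forms, so $\overline\partial\Psi=0\iff\theta^{0,1}=0$. Second, $\nabla\Psi=0\iff\theta=0$, and when this holds $\Psi$ is a global $\nabla$-parallel frame of $K_M$, so $\mathrm{Hol}(\nabla)$ acts trivially on $\Lambda^{2n,0}_I$, i.e.\ $\mathrm{Hol}(\nabla)\subseteq\mathrm{SL}(n,\mathbb H)$ (by definition $\mathrm{SL}(n,\mathbb H)$ is the stabilizer of $\Lambda^{2n,0}_I$ in $\mathrm{GL}(n,\mathbb H)$); conversely $\mathrm{Hol}(\nabla)\subseteq\mathrm{SL}(n,\mathbb H)$ yields a (possibly non-invariant) $\nabla$-parallel nowhere-vanishing section of $K_M$.

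The crucial step is that $\theta$ is a \emph{real} $1$-form. Indeed, for every real vector field $X$, using $\nabla J=0$ and the reality of the Obata connection ($\nabla_X\bar\Psi=\overline{\nabla_X\Psi}$ for $X$ real) one computes $\theta(X)\,J\Psi=J\bigl(\theta(X)\,\Psi\bigr)=J\,\nabla_X\Psi=\nabla_X(J\Psi)=\nabla_X\bar\Psi$, which in turn equals $\overline{\nabla_X\Psi}=\overline{\theta(X)}\,\bar\Psi=\overline{\theta(X)}\,J\Psi$; since $J\Psi=\bar\Psi\neq0$, this forces $\theta(X)=\overline{\theta(X)}\in\R$. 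Hence $\overline{\theta^{1,0}}=\theta^{0,1}$, so $\theta^{0,1}=0$ already implies $\theta=0$. Combining this with the two remarks and with the fact that any invariant nowhere-vanishing $(2n,0)$-form is a nonzero constant multiple of $\Psi$, one sees that the following are equivalent: $\overline\partial\Psi=0$; $\theta^{0,1}=0$; $\theta=0$; $\nabla\Psi=0$; and $K_M$ admits an invariant holomorphic nowhere-vanishing section. Moreover $\nabla\Psi=0$ implies $\mathrm{Hol}(\nabla)\subseteq\mathrm{SL}(n,\mathbb H)$, so ``$\Leftarrow$'' follows.

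For ``$\Rightarrow$'' one must still upgrade an abstract parallel section to the invariant one. By the last remark there is \emph{some} $\nabla$-parallel nowhere-vanishing $\Theta_0\in A^{2n,0}(M)$, a priori not invariant; writing $\Theta_0=h\,\Psi$ with $h\in C^\infty(M,\C\setminus\{0\})$, the equation $\nabla\Theta_0=0$ gives $\theta=-d\log h$, and, taking real parts (legitimate because $\theta$ is real), $\theta=-d\log|h|$. Now fix any left-invariant Riemannian metric on $G$; it descends to $M$, and since $G$ is unimodular (as it is for every solvmanifold) the invariant $1$-form $\theta$ is co-closed: $\delta\theta=-\mathrm{div}\bigl(\theta^{\sharp}\bigr)=-\mathrm{tr}\bigl(\mathrm{ad}_{\theta^{\sharp}}\bigr)=0$. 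Hence $\log|h|$ is harmonic on the compact connected manifold $M$, so it is constant and $\theta=-d\log|h|=0$; therefore $\nabla\Psi=0$ and, by the first remark, $\overline\partial\Psi=0$, so $\Psi$ is an invariant holomorphic trivialization of $K_M$.

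The step I expect to be the heart of the matter is the reality of $\theta$: this is the genuinely quaternionic input---it fails in the purely complex setting, which is precisely why a non-invariant holomorphic trivialization of the canonical bundle (as in the Andrada--Tolcachier example) need not force $\mathrm{SL}(n,\mathbb H)$---and it is what makes ``$\overline\partial$-closed'' collapse to ``$\nabla$-parallel'' for invariant forms. A secondary subtlety, relevant only to ``$\Rightarrow$'', is that the hypothesis concerns the \emph{full} holonomy group, so passing from the abstract parallel section $\Theta_0$ to the invariant $\Psi$ genuinely uses compactness together with the unimodularity of $G$, via the maximum principle applied to $\log|h|$.
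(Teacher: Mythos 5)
Your proposal is correct, and it reorganizes the proof around a single object --- the connection form $\theta$ of the invariant frame $\Psi$ of $K_M$, with the reality of $\theta$ (forced by $J\Psi=\bar\Psi$, $\nabla J=0$) as the pivot --- which differs from the paper in one of the two implications. For ``$\Leftarrow$'' you are essentially reproving what the paper delegates to \cite[Theorem 3.2]{barberis-dotti-verbitsky}: an invariant $\bar\partial$-closed trivialization has $\theta^{0,1}=0$, reality gives $\theta=0$, so the section is $\nabla$-parallel and the holonomy reduces; same idea, just spelled out. The real divergence is in ``$\Rightarrow$''. The paper starts from Verbitsky's q-positive \emph{holomorphic} section $\eta$, writes $\eta=f\Theta$ against an invariant q-positive frame with $f>0$ real, observes that $\bar\partial(\log f)$ is invariant, and kills it by Belgun symmetrization (the symmetrization of a function being constant), concluding $f$ is constant. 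You instead start from the \emph{parallel} section $\Theta_0=h\Psi$ supplied by the holonomy reduction, use the reality of $\theta$ to get $\theta=-d\log|h|$, and conclude $\theta=0$ because an invariant $1$-form on a compact quotient of a unimodular group is co-closed, so $\log|h|$ is harmonic, hence constant. Both mechanisms trade on compactness plus invariance; yours has the advantage of being self-contained (no appeal to Belgun's symmetrization or to the external reference for the parallelism of invariant holomorphic sections) and of making both implications consequences of the single equivalence $\bar\partial\Psi=0\iff\theta^{0,1}=0\iff\theta=0\iff\nabla\Psi=0$, while the paper's route makes the role of q-positivity and the symmetrization operator more visible. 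Minor remark: you do not actually need the divergence computation $\mathrm{div}(\theta^\sharp)=\pm\operatorname{tr}(\mathrm{ad}_{\theta^\sharp})$; since $\delta\theta$ is an invariant function it is constant, and it integrates to zero over the compact $M$, which gives $\delta\theta=0$ directly.
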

\begin{proof}
	Assume $M$ is a $\mathrm{SL}(n,\mathbb{H})$-manifold, then there exists a q-positive holomorphic section $\eta$ of the canonical bundle. We claim that $\eta $ must be invariant. To prove this we argue along the lines of \cite[Proposition 2.1]{fino-otal-ugarte}. Let $\Theta$ be an invariant q-positive section of the canonical bundle (a priori not holomorphic). Since both $\eta$ and $\Theta$ are q-real and q-positive there exists a positive real-valued function $f$ such that $\eta=f\Theta$. Therefore $0=\bar \partial \eta= \bar \partial f \wedge \Theta + f\bar \partial \Theta$, i.e. $\bar \partial \Theta=-\bar \partial(\log f)\wedge \Theta$ because $f$ is positive. Since $\Theta$ is invariant so is $\bar \partial \Theta$ and thus there exists an invariant $(0,1)$-form $\alpha$ such that $\bar \partial(\log f) = \alpha$. We now apply the well-known Belgun's symmetrization process \cite{Belgun} that for any $k$-form $\beta$ on $M$ returns an invariant $k$-form $\mu(\beta)$. Since the hypercomplex structure is invariant $\mu$ preserves $\bar \partial$ and we obtain
	\[
	\bar \partial(\log  f)=\alpha=\mu(\alpha)=\mu(\bar \partial (\log f))=\bar \partial \mu(\log f)=0
	\]
	because the symmetrization of a function is constant. In particular $ f$ is constant, showing that $\eta=f\Theta$ is invariant.
		
	Conversely, let $\eta$ be an invariant nowhere vanishing holomorphic section of the canonical bundle. The fact that $ \mathrm{Hol}(\nabla)\subseteq\mathrm{SL}(n,\mathbb{H}) $, follows from the fact that $ \eta $ is parallel with respect to $ \nabla $, which can be proved along the lines of \cite[Theorem 3.2]{barberis-dotti-verbitsky}.
\end{proof}

Now we prove the following
\begin{theorem}\label{thm:trivialcan-then-balanced}
Let $(M:=\Gamma\backslash G,I,J,K,g)$ be a solvmanifold with an invariant holomorphic section of the canonical bundle and invariant HKT structure. Then $g$ is balanced.
\end{theorem}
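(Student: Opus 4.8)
The plan is to reduce the statement to the criterion --- recalled in the Introduction, due to Verbitsky \cite{Verbitsky (2009)} --- according to which a compact HKT manifold is balanced if and only if $\del\ov{\Omega}^n=0$. Since $M=\Gamma\backslash G$ is compact, it will suffice to prove $\delbar\Omega^n=0$, because conjugating this identity yields $\del\ov{\Omega}^n=0$.

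The first step will be to exploit invariance. As $I$ is an invariant complex structure on the $4n$-dimensional solvmanifold, the invariant $(2n,0)$-forms on $M$ --- equivalently, the invariant sections of the canonical bundle --- form a one-dimensional complex vector space, being the top exterior power of the $2n$-dimensional space of invariant $(1,0)$-forms. Now $\Omega^n$ is one such invariant section: it is the $n$-th power of the invariant HKT form $\Omega$, and it is nowhere vanishing because $\Omega$ is non-degenerate. By hypothesis there is also an invariant holomorphic section $\eta$ of the canonical bundle; being invariant, $\eta$ is a nonzero constant multiple of the same generator, so $\Omega^n=c\,\eta$ for some $c\in\C\setminus\{0\}$. (Alternatively, invoking Theorem \ref{thm:solv_SLnH} one may instead fix an invariant, q-real and q-positive holomorphic section $\Theta$ and write $\Omega^n=f\Theta$ with $f>0$; then $f=\Omega^n/\Theta$ pulls back to a left-invariant function on $G$, hence is a positive constant.)

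The second step is then immediate: since $\eta$ is holomorphic, $\delbar\eta=0$, so $\delbar\Omega^n=c\,\delbar\eta=0$, and conjugating gives $\del\ov{\Omega}^n=0$. By the balanced criterion recalled above, $g$ is balanced, which completes the argument.

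I do not expect a genuine obstacle here. The only point deserving a little care is the rigidity used in the first step --- that two invariant sections of the canonical bundle of a solvmanifold differ only by a constant --- which ultimately reduces to the fact that a left-invariant function on $G$ is constant. Everything else is a direct application of facts already recorded in the Introduction and in Section \ref{Sec:solv}.
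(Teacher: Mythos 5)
Your proposal is correct and follows essentially the same route as the paper: the paper likewise uses that the invariant sections of the (anti)canonical bundle form a one-dimensional space to write $\bar\Omega^n=c\,\bar\eta$ with $c$ constant, and then concludes $\partial\bar\Omega^n=0$ from the holomorphicity of $\eta$ and Verbitsky's balanced criterion. The only cosmetic difference is that you work with $\Omega^n=c\,\eta$ and conjugate at the end, while the paper conjugates first.
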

\begin{proof}
Let $\bar\eta$ be an invariant non-vanishing $\partial$-closed section of $A^{0,2n}(M)$, hence
$$
\bar\Omega^n=c\,\bar\eta
$$
with $c$ constant. Since $d\bar\eta=0$, then $d\bar\Omega^n=0$ and so $\partial\bar \Omega^n=0$ proving that $g$ is balanced. 
\end{proof}
As a consequence we confirm the conjecture by Alesker and Verbitsky on solvmanifolds with invariant hypercomplex structure.
\begin{theorem}\label{Teor:solv}
Let $(M:=\Gamma\backslash G,I,J,K)$ be a $\mathrm{SL}(n,\mathbb{H})$-solvmanifold with invariant hypercomplex structure. Suppose that there exists an HKT metric on $ M $. Then there exists a balanced HKT structure on $ M $.
\end{theorem}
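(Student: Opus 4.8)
The plan is to combine Theorem \ref{thm:solv_SLnH} with Theorem \ref{thm:trivialcan-then-balanced}, after first arranging that the HKT metric may be taken invariant. So the argument splits into three short steps.

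First I would observe that by hypothesis $M$ is $\mathrm{SL}(n,\mathbb{H})$ with invariant hypercomplex structure $(I,J,K)$, and it carries \emph{some} HKT metric. By Theorem \ref{thm:solv_SLnH} the $\mathrm{SL}(n,\mathbb{H})$ condition is equivalent to the existence of an invariant holomorphic section of the canonical bundle; call it $\eta$ (equivalently, $\bar\eta$ is an invariant $\partial$-closed section of $A^{0,2n}(M)$). This is exactly the first hypothesis needed to invoke Theorem \ref{thm:trivialcan-then-balanced}.

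Second, I need an \emph{invariant} HKT structure. The existence of one HKT metric, together with Belgun's symmetrization process \cite{Belgun} used already in the proof of Theorem \ref{thm:solv_SLnH}, produces an invariant HKT form: symmetrizing the $(2,0)$-form $\Omega$ yields an invariant $(2,0)$-form $\widetilde\Omega$ which is still q-real and q-positive (symmetrization commutes with $J$ and conjugation and preserves positivity by averaging), and since symmetrization commutes with $\partial$ the identity $\partial\widetilde\Omega=0$ follows from $\partial\Omega=0$. (Alternatively, for $\mathrm{SL}(n,\mathbb{H})$-solvmanifolds one can cite \cite{fino-grantcharov} as in the Corollary following Theorem \ref{thm:abelian-then-balanced}.) Thus $\widetilde\Omega$ is an invariant HKT structure on $M$.

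Third, I would apply Theorem \ref{thm:trivialcan-then-balanced} directly to the data $(M,I,J,K,\widetilde g)$ with the invariant holomorphic section $\eta$ from Step~1: since $\bar\eta$ is invariant, non-vanishing and $\partial$-closed, we have $\widetilde\Omega^n = c\,\bar\eta$ for some constant $c$, hence $\partial\widetilde\Omega^n=0$, which by \cite{Verbitsky (2009)} is precisely the balanced condition for $(M,I,\widetilde g)$. Therefore $\widetilde\Omega$ is a balanced HKT structure on $M$, proving the theorem. The only place that requires a little care — and hence the main (minor) obstacle — is Step~2: verifying that Belgun symmetrization genuinely outputs a q-positive q-real $(2,0)$-form and commutes with $\partial$; but this is exactly the machinery already invoked in the proof of Theorem \ref{thm:solv_SLnH}, so no new ideas are needed.
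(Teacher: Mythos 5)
Your proposal is correct and follows essentially the same route as the paper: invoke Theorem \ref{thm:solv_SLnH} for an invariant holomorphic section of the canonical bundle, pass to an invariant HKT structure (the paper simply cites \cite{fino-grantcharov} for this, which is the averaging argument you spell out), and conclude by Theorem \ref{thm:trivialcan-then-balanced}. The extra detail you give on Belgun symmetrization preserving q-reality, q-positivity and $\partial$-closedness is a sound justification of the cited step, not a different argument.
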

\begin{proof}
Since $(M:=\Gamma\backslash G,I,J,K,g)$ is an HKT solvmanifold with a $\mathrm{SL}(n,\mathbb{H})$ structure, then the canonical bundle of $M$ has an invariant holomorphic section and, by \cite{fino-grantcharov}, there exists an invariant HKT structure on $M$. Hence, by the previous result the associated Hermitian metric is balanced.
\end{proof}

\end{document}